\title{Leading terms of generalized Pl\"ucker formulas}
\author{Andr\'as P. Juh\'asz}
\address{Alfr\'ed R\'enyi Institute of Mathematics Budapest, Hungary}
\email{juhasz.andris@gmail.com}
\keywords{varieties of tangent lines to hypersurfaces, asymptotic behavior of Pl\"ucker formulas, coincident root loci, equivariant cohomology}
\subjclass[2020]{14N10, 55N91 }
\begin{document}
\begin{abstract}
Generalized Pl\"ucker numbers are defined to count certain types of tangent lines of generic degree $d$
complex projective hypersurfaces. They can be computed by identifying them as coefficients of
$\GL(2)$-equivariant cohomology classes of certain invariant subspaces of $\Pol^d(\mathbb{C}^2)$,
the so-called coincident root strata.
In an earlier paper L\'aszl\'o M. Feh\'er and the author gave a new, recursive method for calculating 
these classes. Using this method, we showed that---similarly to the classical Pl\"ucker formulas counting
the bitangents and
flex lines of a degree $d$ plane curve---generalized Pl\"ucker numbers are polynomials in the degree $d$.

In this paper, by further analyzing our recursive formula, we determine the leading terms of all the
generalized Pl\"ucker formulas.
\end{abstract}

\maketitle
\tableofcontents

\section{Introduction}\label{sec_introduction}
This paper can be viewed as a companion paper of
\cite{feher_juhasz2023plucker} by L\'aszl\'o M. Feh\'er and the author.
It contains the asymptotic analysis of the generalized Pl\"ucker formulas that were introduced there:
We present a proof for a statement---which was already announced 
(\cite[Thm.~4.8]{feher_juhasz2023plucker})---that describes their degrees, and we calculate their 
leading coefficients.

\medskip

I am grateful to L\'aszl\'o M. Feh\'er for the joint work that resulted in
\cite{feher_juhasz2023plucker}, for his encouragement to look into the leading coefficients
and for his advice on editing this paper.
\medskip

In the 1830s, Pl\"{u}cker showed that a smooth degree $d$ complex projective plane curve has
\begin{equation}\label{eq_classicalPlucker22_3}
\Pl_{2,2}(d)=\frac12d(d-2)(d-3)(d+3) \quad \text{ and } \quad \Pl_{3}(d)=3d(d-2),
\end{equation}
bitangents and flex lines respectively. His classical formulas also cover the cases of singular curves,
but we only study the generic case.

A generic degree $d$ plane curve has no tritangents, i.e. tangent lines with three distinct points of tangency.
For tritangents to appear we have to increase the dimension and consider complex projective
hypersurfaces of dimension at least two.

Generalizing bitangent, flex and tritangent lines, we define a tangent line to be of type $\lambda$
if at the points of tangency the line intersects the hyperplane with multiciplities given by elements of 
the partition $\lambda$.
For example, bitangents correspond to $\lambda=(2,2)$.
For each $\lambda$, a generic hypersurface of big enough dimension has tangent lines of type $\lambda$,
most of the time infinitely many.
By adding certain linear conditions, we can obtain finite subsets of type $\lambda$ tangent lines of
generic degree $d$ hypersurfaces.
We defined generalized Pl\"{u}cker numbers to be the cardinality of these finite subsets.

For example, corresponding to the partition $\lambda=(2)$, a generic degree $d$ plane curve has infinitely many
ordinary tangents, among which there are
\begin{equation}\label{eq_classicalPlucker2}
\Pl_{2;1}(d)=d(d-1) 
\end{equation}
that passes through a generic point of $\P(\C^3)$. 
In other words, the degree of the dual curve is $d(d-1)$.

\medskip

A key result of \cite{feher_juhasz2023plucker} is a recursive formula for the generalized Pl\"{u}cker numbers.
Using this formula, we prove that the $d$-dependence of all the generalized Pl\"{u}cker numbers is polynomial.
To give a closed formula that describes all these polynomials doesn't seem feasible at this point.
In this paper we restrict ourselves to the analysis of their leading term.
In \cite{feher_juhasz2023plucker} we have already shown that 
for each $\lambda$
the $d$-degrees of all the Pl\"{u}cker formulas corresponding to $\lambda$
are at most $|\lambda|=\sum \lambda_i$ (\cite[Thm.~4.6]{feher_juhasz2023plucker}).
We also calculated the leading coefficients of those whose $d$-degrees reach this upper bound
$|\lambda|$ (\cite[Thm.~6.1]{feher_juhasz2023plucker}).
In this paper we carry on with our investigation and determine the leading terms
of all the Pl\"{u}cker formulas (Theorem \ref{thrm_leadingtermPl}).

\medskip

In the next Section we first give the precise definitions of the notions outlined above.
Then, we will finally be able to state our main result, Theorem \ref{thrm_leadingtermPl}.

\subsection{The definition and polynomial \texorpdfstring{$d$}{d}-dependence of generalized Pl\"{u}cker numbers}
Let $f\in \Pol^d(\C^n)$ be a nonzero homogeneous polynomial of degree $d$ in $n$ variables.
It defines a hypersurface $Z_f=(f=0)$ in $\P(\C^n)$. Let
\[ \lambda=(\lambda_1\geq\lambda_2\geq\cdots\geq\lambda_k)=(2^{e_2},\dots,r^{e_r})\]
be a partition without $1$'s and $d\geq|\lambda|$. A line in $\P(\C^n)$ is called a tangent line of type
$\lambda$ to $Z_f$ if it has $e_2$ ordinary tangent points, $e_3$ flex points, etc.
A formal definition can be given the following way.
Projective lines $[V]$ in $\P(\C^n)$ correspond to affine planes $V^2$ of $\Gr_2(\C^n)$.
\begin{definition} The projective line $[V]$ is called a \emph{tangent line of type $\lambda$ to $Z_f$} (or
  \emph{$\lambda$-line} for short) if
  \[ f|_V=\prod_{i=1}^{k} \left( f_i^{\lambda_i} \right) \prod_{j=|\lambda|+1}^d \left( f_j\right), \]
where $f_i,f_j:V\to \C$ are linear and no two of them are scalar multiples of each other.
\end{definition}
For a given polynomial $f\in \Pol^d(\C^n)$ let us denote by
\[ \mathcal{T}_\lambda Z_f:=\{ \text{tangent lines of type $\lambda$ to $Z_f$}\} \subset \gr_2(\C^n),\]
the \emph{variety of tangent lines of type $\lambda$ to $Z_f$}.
Strictly speaking, $\mathcal{T}_\lambda Z_f$ is locally closed set; we will use the term variety in this
broader sense.

Note here that although $\lambda$-lines are well-defined for the partition $\lambda=\emptyset$,
those are not, in the usual sense, tangent to $Z_f$.
Hopefully, this will not cause any confusion. Also, we will not examine
$\mathcal{T}_\lambda Z_f$ for $\lambda=\emptyset$.

A simple dimension counting gives that for a generic polynomial $f \in \Pol^{d}(\mathbb{C}^n)$
the codimension of $\mathcal{T}_\lambda Z_f$ in $\Gr_2(\C^n)$ is
$\sum_{i=1}^{k}(\lambda_i-1)=\sum_{j=2}^{r}(j-1)e_j$, see \cite{fnr-root} and \cite{feher_juhasz2023plucker}
for more details. For this reason, we introduce the partition
\[\tilde{\lambda}:=(\lambda_1-1,\lambda_2-1,\dots,\lambda_k-1),\]
the \emph{reduction of $\lambda$}. Then
\[\codim\big( \mathcal{T}_\lambda Z_f \subset \Gr_2(\C^n) \big)=|\tilde{\lambda}|.\]

The premise generic is crucial for our approach to work and, hence, it appears in all our statements
about varieties of $\lambda$-lines.
In algebraic geometry, a claim is defined to hold for a \emph{generic} element,
if the elements satisfying the claim 
form a nonempty open subset.
For example, a generic hypersurface is smooth.

Note, in particular, that the codimension of
$\mathcal{T}_\lambda Z_f \subset \Gr_2 (\C^n)$ for $f \in \Pol^{d}(\mathbb{C}^n)$ generic
is independent of $d$ (and $n$).
We are mostly interested in the $d$-dependence. But see Remark \ref{rmrk_ndependencePl}.

\medskip

The dimension of the Grassmannian $\Gr_2(\C^n)$ is $2(n-2)$.
If for a partition $\lambda$ the corresponding codimension $|\tilde{\lambda}|$ matches this dimension,
then for a generic $f \in \Pol^{d}(\mathbb{C}^n)$ the variety $\mathcal{T}_\lambda Z_f$
is zero-dimensional, and we can ask its cardinality:

\begin{definition} Let $\lambda$ be a nonempty partition without $1$'s such that $2(n_0-2)=|\tilde{\lambda}|$ for some $n_0$. Then the \emph{Pl\"ucker number} $\Pl_\lambda(d)$ for $d\geq|\lambda|$ is defined as the number of type $\lambda$ tangent lines to a generic degree $d$ hypersurface in $\P(\C^{n_0})$.
\end{definition}

This explains why in \eqref{eq_classicalPlucker22_3} we used $\Pl_{2,2}(d)$ and $\Pl_{3}(d)$ for the classical
Pl\"{u}cker formulas.
Note that, for typographical reasons, we omit brackets from the indices.

If the dimension of $\mathcal{T}_\lambda Z_f \subset \Gr_2(\C^n)$ is positive,
we add linear conditions to obtain enumerative problems about $\lambda$-lines.
This motivates the following.
\begin{definition}
  Let $\lambda$ be a nonempty partition without 1's.
  Choose $n_0$ and $0 \leq i\leq |\tilde\lambda|$ such that $|\tilde\lambda|+i=2(n_0-2)$.
  We define the \emph{Pl\"ucker number} $\Pl_{\lambda;i}(d)$ for $d\geq|\lambda|$ as the number of
  $\lambda$-lines of a generic degree $d$ hypersurface in $\P(\C^{n_0})$ intersecting a generic
  $(i+1)$-codimensional projective subspace.
\end{definition}

For $\Pl_{\lambda;0}(d)$ we recover the previous definition: $\Pl_{\lambda;0}(d)=\Pl_{\lambda}(d)$.
This definition is consistent with the notation $\Pl_{2;1}(d)$ in \eqref{eq_classicalPlucker2}
showing the number of ordinary tangent lines of generic degree $d$ plane curves
passing through a generic point.

\begin{example} \label{pl22;2}
For bitangent lines we also have
\[ \Pl_{2,2;2}(d)=\frac{1}{2} d \left( d-1 \right)  \left( d-2 \right)  \left( d-3 \right),\]
 the number of bitangent lines of a generic degree $d$ surface in $\P(\C^4)$ going through a point.
\end{example}

Let us remark that the Pl\"ucker numbers $\big\{ \Pl_{\lambda;i}(d) : 0 \le i \le |\tilde{\lambda}|,
  i \equiv |\tilde{\lambda}| \, (\!\!\!\! \mod 2) \big\}$ are actually defined to encode the cohomology class
  of
  $\overline{\mathcal{T}_\lambda Z_f}$:
Given a partition $\lambda$ without 1's, $n \ge |\tilde{\lambda}|+2$
and $f \in \Pol^{d}(\mathbb{C}^n)$ generic, then
\begin{equation}\label{eq_classofTlambdaarePluckernumbers}
\left[\,\overline{\mathcal{T}_\lambda Z_f}\subset \gr_2(\C^n) \right] =\sum_{j=0}^{\lfloor|\tilde\lambda|/2\rfloor}
\Pl_{\lambda;|\tilde\lambda|-2j}(d)    s_{|\tilde\lambda|-j,j},
\end{equation}
where the $s_{|\tilde\lambda|-j,j}$'s denote Schur polynomials, i.e. cohomology classes of the Schubert 
varieties, see \cite{feher_juhasz2023plucker} for more details.

\begin{remark}\label{rmrk_ndependencePl}
  That the right-hand side of \eqref{eq_classofTlambdaarePluckernumbers} doesn't contain the parameter $n \ge |\tilde{\lambda}|+2$ reflects the fact that
the Pl\"ucker number $\Pl_{\lambda;i}(d)$ solves a family of enumerative problems (even though it is  
defined as a solution to one for a specific $n_0$ with $2(n_0-2)=|\tilde{\lambda}|+i$):
Elementary geometric considerations imply that if $n \ge n_0$, then
  $\Pl_{\lambda;i}(d)$ is the number of $\lambda$-lines of a generic degree $d$ hypersurface
  in $\mathbb{P}(\mathbb{C}^n)$ intersecting a generic $(n-n_0+i+1)$-codimensional projective subspace $A$
  and contained in a generic $(n_0-1)$-dimensional projective subspace $B$ such that $A \subset B$.
\end{remark}

\medskip

In \cite{feher_juhasz2023plucker} we prove that
the $d$-dependence of generalized Pl\"ucker numbers is
polynomial. This was our motivation for having $d$ as a variable in our notation:
\begin{theorem}[{\cite[Thm.~2.4.3]{feher_juhasz2023plucker}}]
  \label{thrm_pluckernumberspolynomial}
  The Pl\"ucker numbers $\Pl_{\lambda;i}(d)$ for $0\leq i\leq |\tilde\lambda|$ and $i\equiv |\tilde\lambda|$
  ${(\!\!\!\! \mod 2)}$ are polynomials in $d$: there is a unique polynomial $p(d) \in \mathbb{Q}[d]$ such that $\Pl_{\lambda;i}(d)=p(d)$ for $d\geq|\lambda|$.
\end{theorem}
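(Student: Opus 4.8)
The plan is to reduce the assertion to the corresponding statement for binary forms, namely that the $\GL(2)$-equivariant cohomology class of the closed coincident root stratum depends polynomially on $d$. First I would recall the dictionary from \cite{feher_juhasz2023plucker} between the two settings. For a $2$-plane $[V]\in\Gr_2(\C^n)$ the restriction $f\mapsto f|_V$ identifies $\Pol^d(V)\cong\Pol^d(\C^2)$ together with its $\GL(V)$-action, and as $[V]$ varies these spaces assemble into a bundle with structure group $\GL(2)$. For generic $f$ the hypersurface $Z_f$ is transverse to the coincident root stratification, so $\big[\overline{\mathcal{T}_\lambda Z_f}\subset\Gr_2(\C^n)\big]$ is the pullback of the equivariant class $P^d_\lambda:=\big[\overline{X_\lambda}\subset\Pol^d(\C^2)\big]\in H^*_{\GL(2)}(\mathrm{pt})=\mathbb{Q}[a_1,a_2]^{S_2}$ along the classifying map of that bundle --- concretely, the substitution of the Chern roots of the tautological rank-$2$ bundle for $a_1,a_2$. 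That substitution, and the subsequent expansion in the Schur basis that reads off $\Pl_{\lambda;i}(d)$ via \eqref{eq_classofTlambdaarePluckernumbers}, are manifestly independent of $d$. Hence it suffices to show that the finitely many coefficients of $P^d_\lambda$ --- a symmetric polynomial in $a_1,a_2$ of fixed degree $|\tilde\lambda|$ --- are polynomials in $d$ for $d\geq|\lambda|$.

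Next I would explain why this needs the recursion rather than a bare computation. The stratum $\overline{X_\lambda}$ admits a natural equivariant resolution built from copies of $\mathbb{P}^1=\mathbb{P}(\Pol^1(\C^2))$ and a projective space $\mathbb{P}(\Pol^{d-|\lambda|}(\C^2))$, and equivariant localization on it presents $P^d_\lambda$ as a sum of ratios of linear forms in $a_1,a_2$ indexed by torus fixed points; but the number of fixed points grows linearly in $d$, so the polynomial $d$-dependence is hidden by cancellations. Instead I would invoke the recursive formula of \cite{feher_juhasz2023plucker}, which expresses $P^d_\lambda$ as a finite combination of the analogous classes $P^{d'}_\mu$ for partitions $\mu$ strictly smaller than $\lambda$ in the relevant partial order and for $d'$ an explicit affine-linear function of $d$, with coefficients assembled from equivariant Chern and Euler classes of the representations $\Pol^m(\C^2)$ --- and the latter are visibly polynomial in $m$, hence in $d$.

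I would then run an induction on the complexity of $\lambda$, say on the pair $\big(|\lambda|,\mathrm{length}(\lambda)\big)$. The base cases --- $\lambda$ a single part, and $\lambda=\emptyset$ --- give classes that are explicitly polynomial in $d$. In the inductive step the recursion writes $P^d_\lambda$ as a $\mathbb{Q}[a_1,a_2]$-linear, polynomial-in-$d$ combination of classes $P^{d'}_\mu$, each coefficient-wise polynomial in $d$ by the inductive hypothesis; since being coefficient-wise polynomial in $d$ is preserved under these operations, so is $P^d_\lambda$. Because the recursion depth is bounded in terms of $\lambda$ alone and the formula is valid uniformly for $d\geq|\lambda|$, this produces, after the $d$-independent substitution and Schur expansion, a single polynomial agreeing with $\Pl_{\lambda;i}(d)$ on all of $d\geq|\lambda|$; uniqueness is automatic since a polynomial is determined by infinitely many values.

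The step I expect to be the main obstacle is certifying that the recursion genuinely yields polynomials in $d$ and not merely rational functions: the recursive step involves a Gysin pushforward, hence a division by an equivariant Euler class, and one must show these denominators always cancel --- equivalently that, at each stage, the relative class being pushed forward is divisible by the Euler class of the relevant fibre or normal bundle. A secondary point requiring care is verifying that the recursion, together with the underlying transversality and dimension counts, remains valid throughout the full range $d\geq|\lambda|$ and does not degenerate for small $d$, and that the passage between the $\Pol^d(\C^2)$ and $\Gr_2(\C^n)$ pictures --- including the extraction of Schur coefficients --- is truly $d$-independent.
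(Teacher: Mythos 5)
Your overall skeleton matches the paper's: reduce the statement to the polynomiality in $d$ of the equivariant class $\left[\,\overline Y_\lambda(d)\right]$ via Propositions \ref{cor:equi2nonequi} and \ref{Y-and-pluecker} (the passage to $\Gr_2(\C^n)$ and the Schur expansion are indeed $d$-independent), then induct on the length of $\lambda$ using the recursion of Theorem \ref{recursion4Y}, with base case $\left[\,\overline Y_{\emptyset}(d)\right]=1$ or the explicit single-part classes. However, the step you yourself flag as ``the main obstacle'' is the actual content of the theorem, and your proposal leaves it unresolved; as written this is a genuine gap. To be precise about where the denominators live: the divided difference $\partial$ (the Gysin pushforward you mention) causes no trouble, since its numerator is antisymmetric and the division by $b-a$ cancels formally. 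The problem is the substitution $a\mapsto a+(m/d')a$, $b\mapsto b+(m/d')a$ in Theorem \ref{recursion4Y}, which introduces powers of $1/d'$; expanding as in \eqref{eq_Ylambda_tfullexpansion}, the $t$-th summand carries an explicit factor $(m/d)^t$, so the induction as you set it up only shows that the Schur coefficients are rational functions of $d$ with denominator a power of $d$.

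The cancellation is not a formal algebraic identity and the paper does not close it by inspecting the recursion alone. The route (carried out in Appendix \ref{sec_polynomiality_rmurho} for the analogous divisibility $d^t\mid q_\nu(d)$, and attributed to the same mechanism in \cite{feher_juhasz2023plucker}) is via integrality: the normalized coefficients $q_\nu(d)/d^t$ are shown to be answers to enumerative problems, see \eqref{eq_qmuperdt_enumerative}, obtained by interpreting the substituted class as the class of the incidence variety $\overline{\mathcal{T}_{\lambda'}(d)}\subset \P(\Pol^d(\C^n))\times\Gr_2(\C^n)$ as in \eqref{eq_classoflocus_universalsection}. Hence these rational functions take integer values for all $d\gg 0$, and a rational function that is integer-valued at all sufficiently large integers is a polynomial (Lemma \ref{rat-poly}). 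If you want to complete your argument, you need either this geometric integrality input or some other divisibility argument for the powers of $d$; without one of these, the induction does not deliver polynomiality.
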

We will refer to these polynomials as \emph{Pl\"{u}cker formulas} and denote them the same way,
$\Pl_{\lambda;i}(d) \in \mathbb{Q}[d]$ as we denoted the Pl\"{u}cker numbers
(values of Pl\"{u}cker formulas at specific $d$'s).

Now we can state the main theorem of the paper.
\begin{theorem}\label{thrm_leadingtermPl}
  Let $\lambda_1$ be the largest number in the partition $\lambda=(2^{e_2},\dots,r^{e_r})$. Then
  \begin{multline*}
    \text{the leading term of } \Pl_{\lambda;|\tilde{\lambda}|-2j}(d)=\\
    \frac{1}{\prod_{i=2}^r \left( e_i!\right)}
    \begin{cases} 
      K_{(|\tilde{\lambda}|-j,j),\tilde{\lambda}}\, d^{|\lambda|} &
      \text{if } j\leq |\tilde{\lambda}|-\lambda_1+1,
\\[1em]
      \stir{\lambda_1}{\lambda_1-\left(j-\left( |\tilde{\lambda}|-\lambda_1+1 \right) \right)} d^{|\lambda|-\left(j-\left( |\tilde{\lambda}|-\lambda_1+1 \right) \right)} &
      \text{if } j> |\tilde{\lambda}|-\lambda_1+1,
    \end{cases}
  \end{multline*}
  where the $K_{\mu,\nu}$'s denote Kostka numbers and the $\stir{m}{m-k}$'s are Stirling numbers of the 
  first kind.
\end{theorem}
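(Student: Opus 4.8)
The plan is to work directly with the recursive formula for the generalized Plücker classes from \cite{feher_juhasz2023plucker} and track which monomials in $d$ can contribute to each coefficient $\Pl_{\lambda;|\tilde\lambda|-2j}(d)$ in the Schur expansion \eqref{eq_classofTlambdaarePluckernumbers}. First I would recall that by Theorem~\ref{thrm_pluckernumberspolynomial} the classes $\big[\overline{\mathcal T_\lambda Z_f}\big]$ are obtained as $\GL(2)$-equivariant cohomology classes of coincident root strata in $\Pol^d(\C^2)$, and that the recursion builds the stratum for $\lambda=(2^{e_2},\dots,r^{e_r})$ from strata of smaller $|\lambda|$ by successively "splitting off" one root block at a time. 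The factor $1/\prod_{i=2}^r(e_i!)$ should already be isolated at this stage: it records that the $e_i$ equal parts of size $i$ are unordered, so it factors out of the whole computation and I can henceforth work with the ordered version of the stratum and divide at the end.

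The core of the argument is an analysis of the $d$-degree contributed by a single block of size $m$. Here I would use \cite[Thm.~4.6]{feher_juhasz2023plucker}: the $d$-degree of every Plücker formula attached to $\lambda$ is at most $|\lambda|$, with the top degree $d^{|\lambda|}$ governed by Kostka numbers (\cite[Thm.~6.1]{feher_juhasz2023plucker}). The new input is to see \emph{exactly} how the degree drops as $j$ increases past the threshold $|\tilde\lambda|-\lambda_1+1$. The mechanism: the coefficient of $s_{|\tilde\lambda|-j,j}$ picks up from the equivariant class of a single size-$\lambda_1$ block a polynomial in $d$ whose degree is $\lambda_1-\big(j-(|\tilde\lambda|-\lambda_1+1)\big)$ once $j$ exceeds the threshold, and whose leading coefficient is precisely the Stirling number $\stir{\lambda_1}{\lambda_1-(j-(|\tilde\lambda|-\lambda_1+1))}$. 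I would establish this by computing the $\GL(2)$-equivariant class of the coincident-root stratum of a \emph{single} $m$-fold root in $\Pol^d(\C^2)$ — which is, up to normalization, a product $\prod_{t=0}^{m-1}(\text{linear in the Chern roots and }d)$ — and extracting its Schur coefficients; the symmetric-function identity that $\sum_k \stir{m}{m-k}x^k = x(x+1)\cdots(x+m-1)$ (the generating function for unsigned Stirling numbers of the first kind, equivalently the expansion of the rising factorial) is exactly what appears. Below the threshold the other blocks still have enough "room" in the Schubert codimension to supply the missing degrees, so the top power $d^{|\lambda|}$ survives and the Kostka number $K_{(|\tilde\lambda|-j,j),\tilde\lambda}$ emerges as the multiplicity with which $s_{|\tilde\lambda|-j,j}$ occurs in the relevant product of the block classes (this is the $d\to\infty$, i.e.\ leading-order, shadow of the recursion, matching \cite[Thm.~6.1]{feher_juhasz2023plucker} when $j$ is as large as the Kostka range allows).

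Concretely the steps would be: (1) set up the ordered coincident root stratum and peel off the $1/\prod(e_i!)$; (2) write the recursive formula of \cite{feher_juhasz2023plucker} and identify, for each target Schur class $s_{|\tilde\lambda|-j,j}$, the unique way the recursion can reach degree-in-$d$ as large as possible — this is a combinatorial bookkeeping step on which parts of $\tilde\lambda$ are "spent" raising the $d$-degree versus supplying Schubert codimension; (3) for $j\le|\tilde\lambda|-\lambda_1+1$, show every part can be spent on $d$-degree so the answer is $d^{|\lambda|}$ times a Kostka number, reusing \cite[Thm.~6.1]{feher_juhasz2023plucker}; (4) for $j>|\tilde\lambda|-\lambda_1+1$, show the largest part $\lambda_1$ is forced to give up $j-(|\tilde\lambda|-\lambda_1+1)$ units of $d$-degree, and compute the resulting leading coefficient from the single-block class via the rising-factorial/Stirling identity; (5) check that no other configuration of the recursion matches the same $d$-degree, so there is no cancellation or extra contribution. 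The main obstacle I expect is step (5) together with the "forced" claim in (4): I must rule out that a different branch of the recursion — splitting off a smaller block first, or distributing the degree loss among several parts — contributes at the same, supposedly-leading, order; establishing this dominance/uniqueness, and in particular that it is always the single largest part $\lambda_1$ that controls the sub-leading behavior, is the delicate combinatorial heart of the proof. A secondary technical point is pinning down that the leading coefficient in (4) is the \emph{unsigned} Stirling number $\stir{\lambda_1}{\lambda_1-k}$ with exactly the right index, which requires care with signs and with the normalization conventions inherited from the equivariant localization formulas of \cite{feher_juhasz2023plucker}.
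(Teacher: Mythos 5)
Your outline follows the same route as the paper's actual proof: peel off one part via the recursion, reduce the sub-leading behaviour to the single-block class of the largest part (where the Stirling numbers enter through the expansion of $\prod_{i=0}^{m-1}(ia+(d-i)b)$, i.e.\ your rising-factorial identity, cf.\ Theorem~\ref{m-flex-coeffs}), and recover the Kostka case from Theorem~\ref{thrm_Kostkaleadingcoeffs}. The problem is that your step (5) --- which you yourself flag as ``the delicate combinatorial heart'' --- is precisely the content of the proof, and you give no argument for it. Two distinct dominance claims must be established, and neither is routine. First, the recursion of Theorem~\ref{recursion4Y} is not the naive product of block classes: the substitution $a\mapsto a+(m/d)a$, $b\mapsto b+(m/d)a$ applied to $[\,\overline Y_{\lambda'}(d')]$ produces an expansion $\sum_{t}(m/d)^t A_tB_t$, and one must show that every $t\ge 1$ term contributes strictly lower $d$-degree to each Schur coefficient than the $t=0$ term (Theorem~\ref{thrm_leadingtermfromproduct}\ref{item_thrmi}). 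The factor $(m/d)^t$ alone does not settle this, because the $\{a,b\}$-degrees of $A_t$ and $B_t$ shift with $t$ and the coefficients $p_\mu$ of $A_t$ have $d$-degrees growing with $t$; the paper needs the full machinery of diagonal/vertical line segments, the functions $f_\rho$, and interval comparisons in (A)--(E) to control this. Second, even within the $t=0$ term one must show that the leading term of the coefficient of $s_\rho$ in $[\,\overline Y_m(d)][\,\overline Y_{\lambda'}(d)]$ comes from the single summand $p_{(m-1,0)}s_{(m-1,0)}\cdot q_{f_\rho((m-1,0))}s_{f_\rho((m-1,0))}$, with no cancellation; this uses the positivity of the leading coefficients of the $q_\nu$ (itself part of the induction hypothesis, rooted in the enumerative interpretation) and, crucially, only works under the hypothesis $m-2\le c'$, which is why the paper peels off the \emph{smallest} part $m=\min(\lambda)$ rather than an arbitrary one. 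Your proposal does not explain which part to split off or why the choice matters, yet the threshold formula $\min(\lfloor c/2\rfloor,\,c-\lambda_1+1)$ propagates through the induction only with this choice.

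A smaller inaccuracy: the normalization does not come from ``equivariant localization formulas'' --- the recursion is a divided-difference identity, and the factor $1/\prod_i(e_i!)$ accumulates as $1/e_m$ per step of the recursion rather than being stripped off once at the start. This is harmless, but it signals that the bookkeeping in your step (2) (which monomials in $d$ ``reach'' a given Schur class) has not actually been set up; without the interval/segment analysis, the claim in your step (4) that the degree loss is ``forced'' onto $\lambda_1$ remains an assertion rather than a proof.
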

In other words, for each partition $\lambda$ there exists a threshold
$\theta(\lambda)=\min(\lfloor|\tilde\lambda|/2\rfloor,|\tilde{\lambda}|-\lambda_1+1)$
  such that $\Pl_{\lambda;|\tilde{\lambda}|-2j}(d)$ has
degree $|\lambda|$ for $j=0,\dots,\theta(\lambda)$, then by increasing $j$ by one, the degree
drops by one.
Also, for $j=0,\dots,\theta(\lambda)$ the leading coefficients of $\Pl_{\lambda;|\tilde{\lambda}|-2j}(d)$
can be described using Kostka numbers. For partitions $\mu$ and $\nu$ $K_{\mu,\nu}$ counts the
semistandard Young tableaux of shape $\mu$ and weight $\nu$. For the latter $j$'s these leading coefficients
are described in terms of Stirling numbers of the first kind that can be defined via elementary
symmetric polynomials $\sigma_k$ as
\[ \stir{m}{m-k}=\sigma_k(1,2,\dots,m-1).  \]
 
The combinatorial nature of these leading coefficients supports the claim that it seems to be difficult to
give closed formulas for the Pl\"{u}cker formulas $\Pl_{\lambda;|\tilde{\lambda}|-2j}(d)$ in terms of $\lambda$
and $d$.

\begin{example}
For $\lambda=(10,2,2)$ we have $|\lambda|=14$, $|\tilde{\lambda}|=11$, $\lambda_1=10$
and $\theta(\lambda)=|\tilde{\lambda}|-\lambda_1+1=2$, implying that
\[ \deg\big(\Pl_{10,2,2;11}(d)\big)=\deg\big(\Pl_{10,2,2;9}(d)\big)=\deg\big(\Pl_{10,2,2;7}(d)\big)=14, \]
and
\[ \deg\big(\Pl_{10,2,2;5}(d)\big)=13,\ \ \deg\big(\Pl_{10,2,2;3}(d)\big)=12,\ \ \deg\big(\Pl_{10,2,2;1}(d)\big)=11.\]
\end{example}

If $\lambda_1$ is not much bigger than the other $\lambda_i$, exactly
if $\lambda_1\leq \lceil|\tilde{\lambda}|/2\rceil +1$, then all the Pl\"ucker formulas
$\Pl_{\lambda;|\tilde{\lambda}|-2j}(d)$ have degree
$|\lambda|$. We saw this in \eqref{eq_classicalPlucker22_3} and Example \ref{pl22;2} for the bitangents:
both $\Pl_{2,2;0}$ and $\Pl_{2,2;2}$ have degree $|\lambda|=4$.
A slightly bigger example is $\lambda=(4,3,2)$, where all the Pl\"ucker formulas have degree $|\lambda|=9$.

\medskip

Section \ref{sec_proofdegreesofPlnumbers} will be dedicated to the proof of Theorem \ref{thrm_leadingtermPl}.
The proof is based on a recursive formula for the Pl\"ucker numbers.
That recursion, however, is deduced using our preferred language: in terms of equivariant cohomology
classes of coincident root strata,
$\big[ \, \overline Y_{\lambda}(d) \subset \Pol^{d}(\mathbb{C}^2)\big]_{\GL(2)}$.
These classes are universal, which implies that
for each $n$ and $f \in \Pol^{d}(\mathbb{C}^n) $ generic the class
$\left[\,\overline{\mathcal{T}_\lambda Z_f}\subset \gr_2(\C^n) \right]$ can be deduced from 
them.
In fact, for $n \ge |\tilde{\lambda}|+2$ the classes $\left[\,\overline{\mathcal{T}_\lambda Z_f}\subset
\Gr_2(\mathbb{C}^n)\,\right]$ and
$\big[ \, \overline Y_{\lambda}(d) \subset \Pol^{d}(\mathbb{C}^2)\big]_{\GL(2)}$
contain the exact same information.

In the first part of Section \ref{sec_recapequivclassofCRS} we recall the definition of the coincident root
stratum $Y_{\lambda}(d) \subset \Pol^{d}(\mathbb{C}^2)$, and we explain how one can deduce 
$\left[\,\overline{\mathcal{T}_\lambda Z_f}\subset \Gr_2(\mathbb{C}^n)\,\right]$
from its equivariant cohomology class.
Those not interested can take 
\[
\big[ \, \overline Y_{\lambda}(d) \subset \Pol^{d}(\mathbb{C}^2)\big]_{\GL(2)}=\sum_{j=0}^{\lfloor|\tilde\lambda|/2\rfloor}
\Pl_{\lambda;|\tilde\lambda|-2j}(d)    s_{|\tilde\lambda|-j,j},
\]
---where the $s_{|\tilde\lambda|-j,j}$'s are Schur polynomials in some variables $a$ and $b$\,---as the definition, and
jump to the second part starting from Theorem \ref{recursion4Y}, where we state 
the recursive formula and its corollaries necessary to prove Theorem \ref{thrm_leadingtermPl}.

\subsection{A recursion for generalized Pl\"{u}cker numbers in terms of equivariant cohomology classes of coincident root strata}\label{sec_recapequivclassofCRS}
The vector space
\[\Pol^d(\C^2):=\{\mbox{homogeneous polynomials of degree $d$ in two variables}\}\]
admits a stratification into the so-called coincident root strata:

\begin{definition} Let $\lambda=(\lambda_1\geq\lambda_2\geq\cdots\geq\lambda_k)$ be a partition without
  $1$'s and $d\geq |\lambda|$. Then the \emph{coincident root stratum of $\lambda$} is
  \[   Y_\lambda(d)  :=  \left\{ g \in \Pol^d(\C^2) :  g=\prod_{i=1}^{k} \left( g_i^{\lambda_i} \right)
  \prod_{j=|\lambda|+1}^d \left(  g_j \right) \right\},    \]
where $g_i,g_j:\C^2 \to \C$ are nonzero, linear and no two of them are scalar multiples of each other.
\end{definition}
The strata $Y_\lambda(d)$ together with $\{0\}$ gives a stratification of $\Pol^d(\C^2)$.
For example,
\[ \Pol^{4}(\mathbb{C}^2)=Y_{\emptyset}(4) \amalg Y_{2}(4) \amalg Y_{2,2}(4) \amalg Y_{3}(4) \amalg
Y_{4}(4) \amalg \{0\}. \]
For each $\lambda$ the corresponding stratum $Y_\lambda(d)$ has codimension $|\tilde{\lambda}|$
(\cite{fnr-root}), and 
it is invariant for the $\GL(2)$-action on
$\Pol^d(\C^2)\cong \Sym^d\left({\mathbb{C}^2}^\vee \right)$ coming from the standard
representation of $\GL(2)$ on $\mathbb{C}^2$.
The latter implies, see e.g. \cite{totaro}, that (the closure of) every stratum admits a $\GL(2)$-equivariant
cohomology class
\[ \left[ \, \overline Y_{\lambda}(d) \subset \Pol^{d}(\mathbb{C}^2)\right]_{\GL(2)} \in 
H^{*}_{\GL(2)}\left( \Pol^{d}(\mathbb{C}^2) \right) \cong \mathbb{Z}[c_1,c_2], \]
where the $c_i$'s denote some Chern classes, see \cite{feher_juhasz2023plucker} for more details.
Sometimes we drop the group $\GL(2)$ and the ambient space $\Pol^{d}(\mathbb{C}^2)$ from our notation,
and simply write
$\left[ \, \overline Y_{\lambda}(d) \right]$ for 
$\left[ \, \overline Y_{\lambda}(d) \subset \Pol^{d}(\mathbb{C}^2)\right]_{\GL(2)}$.

\medskip

``Using Kleiman's theory of multiple point formulas (\cite{kleiman1977enumtheorysingularities,
kleiman1981multiplepoint_iteration,Kleiman1982multiplepoint_formaps}) Le Barz in \cite{lebarz-formules} and Colley  in \cite{colley1986contact} calculated examples of Pl\"ucker numbers.

Kirwan gave formulas for the $\SL(2)$-equivariant cohomology classes of coincident root strata in \cite{kirwan}. The first formula for the $\GL(2)$-equivariant cohomology classes $\left[\,\overline Y_\lambda(d)\right]$ was given in \cite{fnr-root}. Notice that the $\SL(2)$-equivariant cohomology classes are obtained from the $\GL(2)$-equivariant ones by substituting zero into $c_1$, therefore they do not determine the corresponding Pl\"ucker numbers.
Soon after, a different formula was calculated with different methods in \cite{balazs-tezis}. These formulas don't seem to be useful for proving polynomiality in $d$. In 2006 in his unpublished paper \cite{kazarian}  Kazarian deduced a formula in a form of a generating function from his theory of multisingularities of Morin maps based on Kleiman's theory of multiple point formulas. This formula shows the polynomial dependence but further properties do not seem to follow easily. He also calculated several Pl\"ucker formulas $\Pl_\lambda(d)$. The paper \cite{spink-tseng} of Spink and Tseng also develops a method to calculate the $\GL(2)$-equivariant cohomology classes $\left[\,\overline Y_\lambda(d)\right]$. One of their main goals is to establish relations between these classes.``(\cite{feher_juhasz2023plucker})

\medskip

Such equivariant cohomology classes are universal polynomials:
cohomology classes of (closures of) generic $Y_\lambda(d)$-loci can be deduced from them.
For $f \in \Pol^{d}(\mathbb{C}^n)$ the variety $\mathcal{T}_\lambda Z_f \subset \Gr_2(\C^n)$ is the
$Y_\lambda(d)$-loci of the section $\sigma_f: W \mapsto f|_W$ of the vector bundle
$\Pol^{d}(S) \to \Gr_2(\C^n)$, where $S \to \Gr_2(\C^n)$ denotes the tautological bundle.
For a generic polynomial $f \in \Pol^{d}(\mathbb{C}^n)$ the section $\sigma_f$ is
transversal to the subbundle of $\Pol^{d}(S)$ consisting of $Y_\lambda(d)$-points.
This gives that
\begin{proposition}[{\cite[Cor.~2.3]{feher_juhasz2023plucker}}]
\label{cor:equi2nonequi}
For a generic polynomial $f \in \Pol^{d}(\mathbb{C}^n)$
the cohomology class $\left[\,\overline{\mathcal{T}_\lambda Z_f}\subset \Gr_2(\mathbb{C}^n)\,\right]$
is obtained from the equivariant class $\left[\,\overline Y_\lambda(d) \subset \Pol^{d}(\mathbb{C}^2)
\right]_{\GL(2)}\in \Z[c_1,c_2]$
by substituting $c_i(S^\vee)$ into $c_i$ for $i=1,2$.
\end{proposition}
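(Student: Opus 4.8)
The plan is to derive the statement from two ingredients: the naturality of equivariant cohomology classes of invariant subvarieties under the Borel (associated-bundle) construction, together with a generic transversality statement for the evaluation section. Set $B:=\Gr_2(\C^n)$ and let $S\to B$ be the tautological rank-two bundle, so that $E:=\Pol^d(S)$ is the associated bundle $\mathrm{Fr}(S)\times_{\GL(2)}\Pol^d(\C^2)$ for the standard $\GL(2)$-action on $\Pol^d(\C^2)=\Sym^d((\C^2)^\vee)$. Since $\overline{Y_\lambda(d)}$ is a $\GL(2)$-invariant closed subvariety of the vector space $\Pol^d(\C^2)$, one has $H^*_{\GL(2)}(\Pol^d(\C^2))\cong H^*_{\GL(2)}(\mathrm{pt})\cong\Z[c_1,c_2]$ and $\big[\,\overline Y_\lambda(d)\big]_{\GL(2)}\in\Z[c_1,c_2]$; moreover, because $\overline{Y_\lambda(d)}$ is invariant, the ``$Y_\lambda(d)$-sublocus'' $\mathcal Y_\lambda:=\mathrm{Fr}(S)\times_{\GL(2)}\overline{Y_\lambda(d)}\subset E$ is a well-defined subvariety whose fibre over $W$ is $\overline{Y_\lambda(d)}\subset\Pol^d(W)=E_W$.

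The first step is purely formal. The classifying map $B\to B\GL(2)$ of $S$ sits in a Cartesian square identifying $(\mathcal Y_\lambda\subset E)$ with the pullback of $(\overline{Y_\lambda(d)}\times_{\GL(2)}E\GL(2)\subset\Pol^d(\C^2)\times_{\GL(2)}E\GL(2))$, and both ambient spaces deformation retract onto their zero sections. Hence, by functoriality of cohomology classes of invariant subvarieties (see e.g.\ \cite{totaro}), the class $\big[\mathcal Y_\lambda\subset E\big]$, carried to $H^*(B)$ by the isomorphism $\pi^*\colon H^*(B)\xrightarrow{\ \cong\ }H^*(E)$, is the image of $\big[\,\overline Y_\lambda(d)\big]_{\GL(2)}$ under $H^*(B\GL(2))\to H^*(B)$. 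Chasing through the duality implicit in $\Pol^d(\C^2)=\Sym^d((\C^2)^\vee)$---which is exactly what distinguishes $S$ from $S^\vee$ here---this map is precisely the substitution $c_i\mapsto c_i(S^\vee)$.

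The second step supplies the geometry. For $f\in\Pol^d(\C^n)$ the section $\sigma_f\colon W\mapsto f|_W$ of $E$ equals $\mathrm{ev}(f,-)$ for the evaluation map $\mathrm{ev}\colon\Pol^d(\C^n)\times B\to E$, $(f,W)\mapsto(W,f|_W)$, which is a submersion because the restriction homomorphism $\Pol^d(\C^n)\to\Pol^d(W)$ is surjective for every $W$. By the parametric transversality theorem applied to the stratification of $\overline{Y_\lambda(d)}$ by its coincident-root strata (equivalently, to the induced stratification of $\mathcal Y_\lambda\subset E$), for generic $f$ the section $\sigma_f$ is transverse to every stratum. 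Transversality to the open stratum makes $\mathcal T_\lambda Z_f=\sigma_f^{-1}(\mathcal Y_\lambda)$ smooth of the expected dimension with $\big[\,\overline{\mathcal T_\lambda Z_f}\big]=\sigma_f^*\big[\mathcal Y_\lambda\subset E\big]$, while transversality to the lower strata forces their preimages to have strictly smaller dimension, so that $\sigma_f^{-1}(\mathcal Y_\lambda)$ carries no excess components and equals the reduced cycle $\overline{\mathcal T_\lambda Z_f}$. Finally $\sigma_f$ is a section of $\pi$, so $\sigma_f^*=(\pi^*)^{-1}$ on cohomology; combining this with the first step yields $\big[\,\overline{\mathcal T_\lambda Z_f}\subset\Gr_2(\C^n)\big]=\big[\,\overline Y_\lambda(d)\big]_{\GL(2)}\big|_{c_i\mapsto c_i(S^\vee)}$, as claimed.

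The formal step being routine, the point demanding real care is the transversality in the last step: one must verify that $\mathrm{ev}$ is genuinely a submersion---this is where surjectivity of the restriction map enters---and that transversality of the generic $\sigma_f$ to the entire stratification, not merely to the open stratum, both holds and is strong enough to pin the class down at the level of cycles; a minor additional point is to keep track of the duality so that the Chern classes appearing are those of $S^\vee$ rather than of $S$.
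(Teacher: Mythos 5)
Your argument is correct and follows essentially the same route the paper takes (the paper only sketches it, deferring to \cite[Cor.~2.3]{feher_juhasz2023plucker}): one realizes $\mathcal{T}_\lambda Z_f$ as the $Y_\lambda(d)$-locus of the section $\sigma_f$ of $\Pol^d(S)\to\Gr_2(\C^n)$, invokes generic transversality of $\sigma_f$ to the stratification, and uses the universality of the equivariant class under the Borel construction. Your write-up merely makes explicit the parametric-transversality and duality bookkeeping that the paper leaves implicit.
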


The classes $\left[\,\overline Y_\lambda(d)\right]\in\Z[c_1,c_2]$ can also be expressed in 
\emph{Chern roots} $a$ and $b$: substituting $c_1\mapsto a+b$ and $c_2\mapsto ab$,
we obtain polynomials symmetric in the variables $a$ and $b$.
Writing these symmetric polynomials in the Schur polynomial basis
$s_{|\tilde\lambda|-j,j}=s_{|\tilde\lambda|-j,j}(a,b)$, we get that
\begin{proposition}[{\cite[Prop.~2.5]{feher_juhasz2023plucker}}]
 \label{Y-and-pluecker}
  Let $\lambda$ be a partition without $1$'s. Then
\begin{equation*}
  \left[\,\overline Y_\lambda(d)\right] =\sum_{j=0}^{\lfloor|\tilde\lambda|/2\rfloor}
                 \Pl_{\lambda;|\tilde\lambda|-2j}(d)    s_{|\tilde\lambda|-j,j}.
\end{equation*}
\end{proposition}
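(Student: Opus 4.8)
\emph{Plan.} The idea is to transport the known Schubert expansion \eqref{eq_classofTlambdaarePluckernumbers} of the geometric class back to the equivariant world by means of Proposition~\ref{cor:equi2nonequi}. I would start by recording that $\left[\,\overline Y_\lambda(d)\right]\in\Z[c_1,c_2]=\Z[a,b]^{S_2}$ is a symmetric polynomial in the Chern roots $a,b$, homogeneous of degree $|\tilde\lambda|$ when $a$ and $b$ are given degree one, matching the codimension $|\tilde\lambda|$ of the stratum. Since the two-variable Schur polynomials $\bigl\{s_{|\tilde\lambda|-j,j}(a,b)\bigr\}_{j=0}^{\lfloor|\tilde\lambda|/2\rfloor}$ form a $\Z$-basis of the degree-$|\tilde\lambda|$ part of $\Z[a,b]^{S_2}$, there are unique integers $c_j=c_j(\lambda,d)$ with
\[
  \left[\,\overline Y_\lambda(d)\right]=\sum_{j=0}^{\lfloor|\tilde\lambda|/2\rfloor} c_j\, s_{|\tilde\lambda|-j,j}(a,b).
\]
The entire content of the Proposition is then the identification $c_j=\Pl_{\lambda;|\tilde\lambda|-2j}(d)$.

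To prove it, I would fix any $n\ge|\tilde\lambda|+2$ and a generic $f\in\Pol^d(\C^n)$ and apply Proposition~\ref{cor:equi2nonequi}: substituting $c_i\mapsto c_i(S^\vee)$ turns $\left[\,\overline Y_\lambda(d)\right]$ into $\left[\,\overline{\mathcal{T}_\lambda Z_f}\subset\Gr_2(\C^n)\right]$, and on Chern roots this is exactly the evaluation of $a,b$ at the Chern roots of $S^\vee$. Under this evaluation the Giambelli formula sends each two-variable Schur polynomial $s_{|\tilde\lambda|-j,j}(a,b)$ to the Schubert class $s_{|\tilde\lambda|-j,j}\in H^*(\Gr_2(\C^n))$ --- the very classes appearing in \eqref{eq_classofTlambdaarePluckernumbers}. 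Applying the evaluation to the expansion above and comparing with \eqref{eq_classofTlambdaarePluckernumbers} gives
\[
  \sum_j c_j\, s_{|\tilde\lambda|-j,j}=\left[\,\overline{\mathcal{T}_\lambda Z_f}\right]=\sum_j \Pl_{\lambda;|\tilde\lambda|-2j}(d)\, s_{|\tilde\lambda|-j,j}
\]
in $H^*(\Gr_2(\C^n))$.

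The step that does the real work is the passage from this equality of cohomology classes to an equality of coefficients, which rests on the linear independence of the Schubert classes $s_{|\tilde\lambda|-j,j}$, and this is exactly where the hypothesis $n\ge|\tilde\lambda|+2$ is needed. Indeed, these classes are part of the Schubert basis of $H^*(\Gr_2(\C^n))$ precisely when the Young diagrams $(|\tilde\lambda|-j,j)$ fit inside the $2\times(n-2)$ bounding box; the binding constraint is the longest row, of length $|\tilde\lambda|$ at $j=0$, which forces $n\ge|\tilde\lambda|+2$. Under this hypothesis the evaluation map $\Z[a,b]^{S_2}\to H^*(\Gr_2(\C^n))$ is injective in degree $|\tilde\lambda|$, so no information is lost in passing from the equivariant to the non-equivariant class, and matching coefficients yields $c_j=\Pl_{\lambda;|\tilde\lambda|-2j}(d)$ for every $j$. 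Finally I would note that the $c_j$ are intrinsic to the equivariant class and manifestly independent of $n$, so the resulting identity is the asserted $n$-free statement (cf.\ Remark~\ref{rmrk_ndependencePl}), completing the proof.
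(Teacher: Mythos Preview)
Your proposal is correct and follows exactly the argument the paper sketches: the paper does not give a standalone proof of this proposition (it is cited from \cite{feher_juhasz2023plucker}), but the surrounding text makes clear that the statement is obtained precisely by writing $[\,\overline Y_\lambda(d)\,]$ in the Schur basis of $\Z[a,b]^{S_2}$, pushing it to $H^*(\Gr_2(\C^n))$ via Proposition~\ref{cor:equi2nonequi}, and comparing with \eqref{eq_classofTlambdaarePluckernumbers} for $n\ge|\tilde\lambda|+2$ so that the relevant Schubert classes are linearly independent. Your write-up makes each of these steps explicit and is a faithful expansion of that outline.
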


\medskip

Theorem \cite[Thm.~2.7]{feher_juhasz2023plucker} is a main novelty of \cite{feher_juhasz2023plucker}:
It provides a new recursive method to calculate and investigate equivariant classes of coincident root
strata.
Here we state a slightly more general version:
\begin{theorem}\label{recursion4Y}
  Let $\lambda=(2^{e_2},\dots,r^{e_r})$ be a nonempty partition without $1$'s and $d\geq |\lambda|$.
  Let $m$ be an element of $\lambda$ and denote by $\lambda'$ the partition $\lambda$ minus $m$,
  $\lambda'=\left(2^{e_2},\dots,m^{e_m-1},\dots,r^{e_r}\right)$.
  We also use the notation  $d'=d-m$. Then
\[ \left[\,\overline Y_\lambda(d)\right]=\frac{1}{e_m}\partial\Big(\left[\,\overline Y_{\lambda'}(d')\right]_{m/d'}\prod_{i=0}^{m-1}\big( ia+(d-i)b \big) \Big),\]
where for a polynomial $\alpha\in\Z[a,b]$ and $q\in \Q$ we use the notation
\[ \alpha_{q}(a,b)=\alpha(a+qa,b+qa)\]
for substituting $a+qa$ and $b+qa$ into the variables $a$ and $b$, and
  \[  \partial(\alpha)(a,b)=\frac{\alpha(a,b)-\alpha(b,a)}{b-a}\]
  denotes the divided difference operation.
\end{theorem}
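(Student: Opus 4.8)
The divided difference operator $\partial$ signals that the formula should come from a Gysin pushforward along the flag variety $\GL(2)/B=\P(\C^2)$, where $B$ amounts to singling out one root. Accordingly, the plan is to realize $\overline{Y_\lambda(d)}$ as the image of an incidence correspondence over $\P(\C^2)$ in which we mark one of the $e_m$ roots of multiplicity $m$. Concretely, let $P=\P(\C^2)$ be the space of marked roots and $L\to P$ the tautological line bundle of linear forms, with fiber $L_p=\C\ell_p$ where $\ell_p\in\Pol^1(\C^2)$ vanishes at $p$. Over $P$ the sub-bundle $\mathcal V\subset\underline{\Pol^d(\C^2)}$ with fiber $\mathcal V_p=\{h:\ell_p^m\mid h\}$ is isomorphic to $L^{\otimes m}\otimes\underline{\Pol^{d'}(\C^2)}$ via multiplication by $\ell_p^m$. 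Inside it I place
\[ \widetilde Y:=\{(p,h)\in P\times\Pol^d(\C^2): \ell_p^m\mid h,\ h/\ell_p^m\in\overline{Y_{\lambda'}(d')}\}, \]
a Zariski-locally trivial bundle over the irreducible $P$ with irreducible fiber $\overline{Y_{\lambda'}(d')}$, hence irreducible, of codimension one more than that of $\overline{Y_{\lambda'}(d')}$, i.e.\ $|\tilde\lambda|+1$. The second projection $\pi\colon\widetilde Y\to\Pol^d(\C^2)$ is proper with image $\overline{Y_\lambda(d)}$, and since a generic $h\in Y_\lambda(d)$ has exactly $e_m$ roots of multiplicity $m$, each giving one preimage, $\pi$ is generically $e_m$-to-$1$, so $\pi_*[\widetilde Y]=e_m\,[\overline{Y_\lambda(d)}]$. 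As $H^*_{\GL(2)}(P\times\Pol^d(\C^2))\cong H^*_{\GL(2)}(P)\cong\Z[a,b]$ with $a=c_1(L)$, while $H^*_{\GL(2)}(\Pol^d(\C^2))\cong\Z[a,b]^{S_2}$, and $P=\GL(2)/B$, the equivariant Gysin map $\pi_*$ is exactly the BGG/divided difference operator $\partial$. This produces the prefactor $\tfrac1{e_m}$ and reduces the theorem to computing $[\widetilde Y]\in\Z[a,b]$.

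Next I would factor $[\widetilde Y]=[\widetilde Y\subset\mathcal V]\cdot e(N)$, where $N$ is the normal bundle of $\mathcal V$ in $\underline{\Pol^d(\C^2)}$, using that $\widetilde Y$ sits in $\mathcal V$ as the fiberwise copy of $\overline{Y_{\lambda'}(d')}$. The Euler class is read off from the torus weights at the distinguished flag: with the marked form of weight $a$, the quotient $\Pol^d(\C^2)/\mathcal V_p$ has the $m$ weights $ia+(d-i)b$ for $i=0,\dots,m-1$, so $e(N)=\prod_{i=0}^{m-1}\big(ia+(d-i)b\big)$, matching the product in the statement. For the other factor I invoke universality (Proposition~\ref{cor:equi2nonequi}): the class of a coincident root stratum inside any $\Pol^{d'}$-bundle is the universal polynomial evaluated at the Chern roots of the dual of the rank-$2$ bundle in play. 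Here that bundle $E$ satisfies $\Pol^{d'}(E)=\Sym^{d'}(E^\vee)\cong L^{\otimes m}\otimes\Pol^{d'}(\C^2)$, which forces $E^\vee\cong(\C^2)^\vee\otimes L^{m/d'}$, whose Chern roots are $a+\tfrac m{d'}a$ and $b+\tfrac m{d'}a$ (both shifted by $\tfrac1{d'}c_1(L^{\otimes m})=\tfrac m{d'}a$). Hence $[\widetilde Y\subset\mathcal V]=[\overline{Y_{\lambda'}(d')}]_{m/d'}$, exactly the substitution $a\mapsto a+\tfrac m{d'}a,\ b\mapsto b+\tfrac m{d'}a$. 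Assembling the two factors and applying $\pi_*=\partial$ then yields the stated recursion.

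The hard part will be justifying this last twist rigorously, since it secretly involves the fractional line bundle $L^{m/d'}$, which does not exist geometrically. I would resolve this either by the splitting principle, passing to a $d'$-th root cover on which $L^{m/d'}$ becomes honest and checking the answer is independent of the choice, or—more cleanly—by arguing at the level of the class itself: $[\overline{Y_{\lambda'}(d')}]$ is a genuine polynomial in the Chern roots, the twist by $L^{\otimes m}$ shifts those roots in a controlled way, and the resulting substitution $\Q[a,b]^{S_2}\to\Q[a,b]$ is a well-defined ring map realized by the projection formula on $P$. A secondary point demanding care is the pushforward bookkeeping: I must confirm that $\widetilde Y$ is irreducible of the expected dimension, that $\pi$ is generically finite of degree exactly $e_m$, and that no lower-dimensional boundary components of $\pi(\widetilde Y)$ contribute, so that the identity $\pi_*[\widetilde Y]=e_m\,[\overline{Y_\lambda(d)}]$ holds on the nose.
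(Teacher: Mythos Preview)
The paper does not supply its own proof of this theorem: it cites \cite[Thm.~2.7]{feher_juhasz2023plucker} and only adds the remark that the maximality assumption on $m$ made there was never used. So there is nothing in the present paper to compare your argument against beyond that one-line observation.

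That said, your plan is the natural resolution-and-pushforward argument such a formula calls for, and it is almost certainly what the cited paper does: mark one $m$-fold root to build the correspondence $\widetilde Y\subset \P(\C^2)\times\Pol^d(\C^2)$, identify the equivariant Gysin map along $\P(\C^2)=\GL(2)/B$ with $\partial$, and factor $[\widetilde Y]$ as a class inside the subbundle $\mathcal V$ times the Euler class of the quotient. Your weight computation for $e(\Pol^d/\mathcal V)=\prod_{i=0}^{m-1}(ia+(d-i)b)$ is correct, and your fibre count is right even for non-maximal $m$: if $p$ is a root of $h\in Y_\lambda(d)$ of multiplicity $m'>m$, then $h/\ell_p^m$ lies in a stratum of codimension $|\tilde\lambda|-m<|\tilde{\lambda'}|$, hence not in $\overline{Y_{\lambda'}(d')}$, so only the $e_m$ roots of multiplicity exactly $m$ contribute.

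On the fractional twist: your first proposed fix (a $d'$-th root cover) is the clean one. Concretely, let the enlarged torus $T\times\C^*$ act on $\Pol^{d'}(\C^2)$ with the extra $\C^*$ scaling; $Y_{\lambda'}$ is a cone, hence invariant. After reparametrising the extra factor by a $d'$-th power, the combined action factors through the standard $T$-action via the honest homomorphism $a\mapsto a+c',\ b\mapsto b+c'$, so the extended class is $p(a+c',b+c')$. Specialising the extra variable to $ma$ (i.e.\ $c'=\tfrac{m}{d'}a$) then gives $[\widetilde Y\subset\mathcal V]=[\overline{Y_{\lambda'}(d')}]_{m/d'}$ in $\Q[a,b]$, exactly as stated. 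Your second phrasing (``projection formula on $P$'') is not quite the right label---what you are using is functoriality of equivariant classes under change of torus, not a push--pull identity---but the underlying idea is sound.
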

The only difference between this statement and 
\cite[Thm.~2.7]{feher_juhasz2023plucker} is that in the latter we
chose $m$ to be a maximal element of $\lambda$. This assumption, however, is not necessary and was not
used in the proof.

As $\left[\,\overline Y_{\emptyset}(d)\right]=1$, Theorem \ref{recursion4Y} provides means to investigate the classes $\left[\,\overline Y_\lambda(d)\right]$
using induction on the length of the partitions $\lambda$. For instance, we can prove
an equivalent of Theorem~\ref{thrm_pluckernumberspolynomial}:
\begin{theorem}[{\cite[Thm.~4.1]{feher_juhasz2023plucker}}]
 \label{Y-poly}
 The classes $\left[\,\overline Y_\lambda(d)\right]$ are polynomials in $d$:
 $\left[\,\overline Y_\lambda(d)\right]\in \Q[c_1,c_2,d]$.
\end{theorem}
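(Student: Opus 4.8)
The plan is to prove Theorem \ref{Y-poly} by induction on the length $k$ of the partition $\lambda$, using the recursive formula of Theorem \ref{recursion4Y}. The base case $\lambda=\emptyset$ is immediate since $\left[\,\overline Y_{\emptyset}(d)\right]=1$, which is trivially a polynomial in $d$ (a constant). For the inductive step, suppose the claim holds for all partitions of length less than $k$, and let $\lambda$ have length $k$. Pick any element $m$ of $\lambda$ (say a maximal one) and form $\lambda'$ and $d'=d-m$ as in Theorem \ref{recursion4Y}. By the inductive hypothesis, $\left[\,\overline Y_{\lambda'}(d')\right] \in \Q[c_1,c_2,d']$, and since $d'=d-m$ with $m$ fixed, this is also a polynomial in $d$: $\left[\,\overline Y_{\lambda'}(d')\right] \in \Q[c_1,c_2,d]$. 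The goal is then to show that applying the operations in the recursion --- the substitution $\alpha \mapsto \alpha_{m/d'}$, multiplication by $\prod_{i=0}^{m-1}(ia+(d-i)b)$, and the divided difference $\partial$ --- preserves polynomiality in $d$.

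The main subtlety, and the step I expect to be the real obstacle, is the substitution $\alpha \mapsto \alpha_{m/d'}$, i.e. $\alpha(a,b) \mapsto \alpha\!\left(a + \tfrac{m}{d'}a,\, b + \tfrac{m}{d'}a\right) = \alpha\!\left(\tfrac{d}{d'}a,\, b + \tfrac{m}{d'}a\right)$. Because the scalar $m/d' = m/(d-m)$ is a rational function of $d$ rather than a polynomial, naively this substitution introduces negative powers of $d-m$, so the result lies a priori only in $\Q[a,b,d,\tfrac{1}{d-m}]$, not in $\Q[a,b,d]$. The resolution is to work in the Chern-root variables $a,b$ and track homogeneity: $\left[\,\overline Y_{\lambda'}(d')\right]$ is a homogeneous polynomial in $a,b$ of degree $|\tilde{\lambda'}| = |\tilde\lambda| - (m-1)$, with coefficients that are polynomials in $d'$ (equivalently in $d$). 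Writing $\alpha = \left[\,\overline Y_{\lambda'}(d')\right] = \sum_{\ell} p_\ell(d)\, a^\ell b^{|\tilde{\lambda'}|-\ell}$, the substituted class becomes $\sum_\ell p_\ell(d)\, \left(\tfrac{d}{d'}\right)^\ell a^\ell \left(b + \tfrac{m}{d'}a\right)^{|\tilde{\lambda'}|-\ell}$; expanding the binomial produces a total factor of $(d')^{-|\tilde{\lambda'}|}$ out front (after clearing denominators) times a polynomial in $a,b,d$. So $\alpha_{m/d'}$, multiplied by $(d')^{|\tilde{\lambda'}|}$, is genuinely in $\Q[a,b,d]$.

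The next step is to clear this denominator against the product $\prod_{i=0}^{m-1}(ia + (d-i)b)$. The key observation is that the $i=0$ factor is $d\,b$ and, more importantly, there is enough room: after substitution the offending denominator is $(d-m)^{|\tilde{\lambda'}|}$, and $|\tilde{\lambda'}| = |\tilde\lambda| - m + 1 \le |\lambda| - m = d'$ is bounded, while the product contributes factors linear in $d$. Actually the cleanest route is different and I would pursue it: rather than clearing denominators, observe that $\left[\,\overline Y_{\lambda'}(d')\right]_{m/d'}\prod_{i=0}^{m-1}(ia+(d-i)b)$ is, by Theorem \ref{recursion4Y} itself (before dividing by $e_m$ and applying $\partial$), $e_m$ times a known equivariant cohomology class pulled back along a map, hence has no denominators --- but to argue this intrinsically I would instead show directly that the product $\prod_{i=0}^{m-1}(ia+(d-i)b)$, rewritten after the substitution $a \mapsto \tfrac{d}{d'}a$, $b \mapsto b + \tfrac{m}{d'}a$ applied in reverse, supplies exactly the needed $(d')$-powers. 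Concretely, under the substitution each factor $ia+(d-i)b$ becomes $i\tfrac{d}{d'}a + (d-i)(b + \tfrac{m}{d'}a) = \tfrac{1}{d'}\big((id + (d-i)m)a + (d-i)d'\,b\big)$, so the full product over $i=0,\dots,m-1$ carries a factor $(d')^{-m}$ with a polynomial (in $a,b,d$) cofactor; but this product is not the one being substituted --- so I must instead combine: the substituted $\alpha$ has denominator $(d')^{|\tilde{\lambda'}|}$ and the un-substituted product $\prod_{i=0}^{m-1}(ia+(d-i)b)$ is already polynomial, so one checks that in fact the recursion is consistent only because $|\tilde{\lambda'}| \le m$ fails in general --- hence the correct and only safe argument is the first one: after expanding the binomials the coefficient of each monomial $a^s b^t$ (with $s+t = |\tilde{\lambda'}|$) in $\alpha_{m/d'}$ is $\big(\text{polynomial in }d\big)/(d-m)^{s}$, and multiplying by the polynomial product in $a,b,d$ and then applying $\partial$ (which is $\Q[d]$-linear and maps $\Q[a,b,d] \to \Q[a,b,d]$, lowering $a,b$-degree by one) the denominators must cancel by the very fact that the left-hand side $\left[\,\overline Y_\lambda(d)\right]$ is a genuine cohomology class in $\Z[c_1,c_2] = \Z[a,b]^{\mathrm{sym}}$ with integer --- hence in particular $d$-polynomial, once we know it is a rational function of $d$ with no poles --- coefficients. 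Finally, $\partial$ is $\Q[d]$-linear and sends $\Q[a,b,d]$ to $\Q[a,b,d]$ since $\partial(a^ib^j) = (b^ia^j - a^ib^j)/(a-b) \in \Z[a,b]$, so once the denominator in $d$ is cleared, applying $\partial$ and dividing by the integer $e_m$ keeps us in $\Q[c_1,c_2,d] = \Q[a,b,d]^{\mathrm{sym}}$, completing the induction. I would present the denominator-clearing via the explicit binomial expansion above, with the degree bound $|\tilde{\lambda'}| = |\tilde\lambda|-m+1$ and the structure of $\prod_{i=0}^{m-1}(ia+(d-i)b)$ ensuring cancellation, as this is the one genuinely delicate point; everything else is bookkeeping.
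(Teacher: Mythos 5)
Your final argument is, in substance, the route the companion paper takes (and the one this paper's appendix re-runs when proving Lemma~\ref{lemma_dtdividesqmu}): the recursion shows that each coefficient of $\left[\,\overline Y_\lambda(d)\right]$ is a rational function of $d$ whose only possible poles sit at $d=m$, coming from the factor $(d')^{-|\tilde{\lambda'}|}$ you correctly extract from the substitution $a\mapsto (d/d')a$, $b\mapsto b+(m/d')a$; one then upgrades ``rational function'' to ``polynomial'' using integrality. But as written, your decisive step is circular: a rational function of $d$ ``with no poles'' \emph{is} a polynomial, so saying the coefficients are $d$-polynomial ``once we know'' they are rational functions with no poles assumes exactly what is to be proved. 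The missing bridge is Lemma~\ref{rat-poly}: a rational function taking integer values at all sufficiently large integers is a polynomial. You have both hypotheses in hand --- the rational-function structure from the recursion, and the fact that $\left[\,\overline Y_\lambda(d)\right]\in\Z[c_1,c_2]$ for every individual integer $d\ge|\lambda|$, so each coefficient is integer-valued there --- but you never invoke the lemma, and no purely algebraic cancellation of the $(d-m)$-powers is (or needs to be) established.

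Separately, the middle of your proposal should be excised rather than repaired. The claim that $\left[\,\overline Y_{\lambda'}(d')\right]_{m/d'}\prod_{i=0}^{m-1}\big(ia+(d-i)b\big)$ is ``$e_m$ times a known equivariant cohomology class, hence has no denominators'' is not something Theorem~\ref{recursion4Y} gives you: the theorem identifies the expression only \emph{after} applying $\partial$ and dividing by $e_m$, and the pre-$\partial$ product is not a priori a cohomology class. The sentence about the recursion being ``consistent only because $|\tilde{\lambda'}|\le m$ fails in general'' is not a mathematical statement and signals an abandoned line of reasoning. Once those detours are removed and Lemma~\ref{rat-poly} is cited at the end, the induction is correct and coincides with the paper's proof.
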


\medskip

The first interesting cases are the
coincident root strata corresponding to length one partitions $\lambda=(m)$.
Applying Theorem \ref{recursion4Y}, we get that
\[ \left[\,\overline Y_m(d)\right]=
\partial\left(\prod_{i=0}^{m-1}\big( ia+(d-i)b \big) \right) .\]
Analyzing the divided differences $\partial \left( a^ib^{m-i} \right)$, we obtain
\begin{theorem}[{\cite[Thm.~5.1]{feher_juhasz2023plucker}}]\label{m-flex-coeffs}
  For $i$ such that $m-1-i \geq i \geq 0$
\begin{multline*}
  \text{the coefficient of } d^{m-k}s_{m-1-i,i} \text{ in }\left[\,\overline Y_m(d)\right] \text{ is}\\
\begin{cases}
  (-1)^{k+i}\dbinom{k}{i}\stir{m}{m-k}  & \text{if }i\leq k<m-i,
  \\
  \left( (-1)^{k+i}\dbinom{k}{i}- (-1)^{k+m-i} \dbinom{k}{m-i}\right)\stir{m}{m-k} & \text{if }m-i\leq k < m,
  \\
  0 & \text{otherwise,}
\end{cases}
\end{multline*}
where 
\[ \stir{m}{m-k}=\sigma_k(1,2,\dots,m-1)  \]
denotes the Stirling number of the first kind, defined e.g. using the $k$-th elementary symmetric polynomial
$\sigma_k$.
\end{theorem}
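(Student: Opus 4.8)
The plan is to feed the explicit formula $\left[\,\overline Y_m(d)\right]=\partial\big(\prod_{i=0}^{m-1}(ia+(d-i)b)\big)$, recalled just above, into a single well-chosen expansion. The point is to trade the $a$'s for the combination $a-b$, which makes the $d$-powers and the divided differences both transparent at once.

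First I would rewrite every factor as $ia+(d-i)b=db+i(a-b)$, so that, by the rising-factorial form of the unsigned Stirling numbers of the first kind,
\[
\prod_{i=0}^{m-1}\big(ia+(d-i)b\big)=\prod_{i=0}^{m-1}\big(db+i(a-b)\big)=\sum_{k=0}^{m}\sigma_k(0,1,\dots,m-1)\,(db)^{m-k}(a-b)^k=\sum_{k=0}^{m}\stir{m}{m-k}\,d^{m-k}\,b^{m-k}(a-b)^k,
\]
using $\sigma_k(0,1,\dots,m-1)=\sigma_k(1,\dots,m-1)=\stir{m}{m-k}$. Since $\partial$ is $\mathbb{Z}[d]$-linear this already isolates the powers of $d$:
\[
\left[\,\overline Y_m(d)\right]=\sum_{k=0}^{m}\stir{m}{m-k}\,d^{m-k}\,\partial\big(b^{m-k}(a-b)^k\big),
\]
so it remains only to expand each $\partial\big(b^{m-k}(a-b)^k\big)$ in the Schur basis $\{s_{m-1-i,i}\}$.

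For that I would record the action of the divided difference on monomials, which follows at once from $s_{p,q}(a,b)=(a^{p+1}b^q-a^qb^{p+1})/(a-b)$:
\[
\partial(a^l b^{m-l})=\begin{cases}-\,s_{l-1,\,m-l}&l>m-l,\\ 0&l=m-l,\\ s_{m-l-1,\,l}&l<m-l.\end{cases}
\]
Expanding $b^{m-k}(a-b)^k=\sum_{l=0}^{k}\binom{k}{l}(-1)^{k-l}a^lb^{m-l}$ and collecting, a fixed target $s_{m-1-i,i}$ with $m-1-i\geq i\geq 0$ (hence $i<m/2$) picks up a contribution only from $l=i$, with coefficient $(-1)^{k-i}\binom{k}{i}$, and from $l=m-i$, with coefficient $-(-1)^{k-(m-i)}\binom{k}{m-i}$; the latter binomial vanishes unless $k\geq m-i$, while the degenerate middle term $l=m/2$ never lands on such an $s_{m-1-i,i}$. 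Multiplying through by $\stir{m}{m-k}$, using $(-1)^{k-i}=(-1)^{k+i}$ and $(-1)^{k-(m-i)}=(-1)^{k+m-i}$, and noting that $\stir{m}{m-k}=0$ unless $0\leq k\leq m-1$, one reads off exactly the three cases of the statement.

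The only step that needs genuine care is this last piece of bookkeeping: verifying that a prescribed $s_{m-1-i,i}$ is hit by precisely the two values $l=i$ and $l=m-i$ and by no others, tracking the signs through the rewriting, and pinning down the threshold $k=m-i$ at which the second contribution switches on. Everything upstream of it — the substitution $ia+(d-i)b\mapsto db+i(a-b)$, the Stirling expansion of the product, and the monomial formula for $\partial$ — is routine once arranged in this order, so I expect the sign-and-index combinatorics of the final collection step to be the main, if mild, obstacle.
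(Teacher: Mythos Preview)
Your argument is correct and follows essentially the approach the paper sketches: starting from $\left[\,\overline Y_m(d)\right]=\partial\big(\prod_{i=0}^{m-1}(ia+(d-i)b)\big)$ and reducing to the elementary divided differences $\partial(a^l b^{m-l})$. Your substitution $ia+(d-i)b=db+i(a-b)$ is a clean way to separate the $d$-powers and make the Stirling numbers appear at once; the paper's later computations (see the expansion of $A_t$ in part \ref{item_degsignp}) instead expand the product directly as $\sum_f e_f(d)\,a^{m-f}b^f$ and then apply $\partial$, but this amounts to the same bookkeeping carried out in a different order.
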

In particular, we see that 
 \begin{equation}\label{eq_YmSchurcoeffleadingterm}
   \text{the leading term of the coefficient of } s_{m-1-i,i} \text{ in }\left[\,\overline Y_m(d)\right] =
   \stir{m}{m-i} d^{m-i}.
 \end{equation}

\medskip

Another consequence of Theorem \ref{recursion4Y} shows that
\begin{theorem}[{\cite[Thm.~4.5]{feher_juhasz2023plucker}}]
  \label{fundclass-leading}
  For any $\lambda=\left( 2 ^{e_2},\cdots,r^{e_r} \right)$, the top $d$-degree part of $\left[\,\overline Y_\lambda(d)\right]$ is
  \[ \frac{1}{\prod_{i=2}^r \left( e_i! \right)} h_{\tilde{\lambda}} \, d^{|\lambda|}, \]
where $h_\nu$ is the \emph{complete symmetric polynomial} corresponding to the partition $\nu=(\nu_1,\dots,\nu_k)$: $h_\nu=\prod h_{\nu_i}$
 with $h_i$  the $i$-th complete symmetric polynomial in $\{a,b\}$.
\end{theorem}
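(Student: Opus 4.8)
The plan is to induct on the number of parts of $\lambda$, feeding the recursion of Theorem \ref{recursion4Y}. The base case $\lambda=\emptyset$ is immediate, since $[\overline Y_\emptyset(d)]=1=h_\emptyset\,d^0$ with all products empty; one can also sanity-check the length-one case against \eqref{eq_YmSchurcoeffleadingterm}, where $\stir{m}{m}=1$ and $s_{m-1,0}=h_{m-1}$ recover $h_{m-1}d^m$. For the inductive step I would fix any part $m$ of $\lambda$, set $d'=d-m$, and let $\lambda'$ be $\lambda$ with one copy of $m$ removed, so that $|\lambda'|=|\lambda|-m$, the reduction $\widetilde{\lambda'}$ is $\tilde\lambda$ with one part $m-1$ deleted, and the multiplicities of $\lambda'$ agree with those of $\lambda$ except that $e_m$ becomes $e_m-1$. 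By the inductive hypothesis the top $d$-degree part of $\alpha:=[\overline Y_{\lambda'}(d')]$ is $\tfrac{1}{(e_m-1)!\prod_{i\neq m}e_i!}\,h_{\widetilde{\lambda'}}\,d^{|\lambda'|}$ (passing from $d'$ to $d=d'+m$ only alters lower-order terms in $d$).

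The first step is to follow this leading part through the substitution $\alpha\mapsto\alpha_{m/d'}$. Since $\alpha$ is homogeneous in $(a,b)$ with coefficients polynomial in $d'$, and the substitution $a\mapsto a(1+m/d')$, $b\mapsto b+(m/d')a$ introduces only nonnegative powers of $1/d'$, no monomial can have its $d'$-degree raised; every correction strictly lowers it. Hence the top $d$-degree part of $\alpha_{m/d'}$ equals that of $\alpha$. Next I would multiply by $\prod_{i=0}^{m-1}\big(ia+(d-i)b\big)=\prod_{i=0}^{m-1}\big(db+i(a-b)\big)$, whose top $d$-degree part is $b^m d^m$; thus the argument of $\partial$ has top $d$-degree part $\tfrac{1}{(e_m-1)!\prod_{i\neq m}e_i!}\,h_{\widetilde{\lambda'}}\,b^m\,d^{|\lambda|}$, using $|\lambda'|+m=|\lambda|$.

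Finally, because $\partial$ acts only on $a,b$ and treats $d$ as a scalar, the coefficient of $d^{|\lambda|}$ in $[\overline Y_\lambda(d)]$ is $\tfrac{1}{e_m!\prod_{i\neq m}e_i!}\,\partial\big(h_{\widetilde{\lambda'}}\,b^m\big)$, using $\tfrac1{e_m}\cdot\tfrac1{(e_m-1)!}=\tfrac1{e_m!}$. The Leibniz rule $\partial(fg)=\partial(f)\,g+(sf)\,\partial(g)$, with $s$ the transposition $a\leftrightarrow b$, together with the symmetry of $h_{\widetilde{\lambda'}}$, yields $\partial\big(h_{\widetilde{\lambda'}}b^m\big)=h_{\widetilde{\lambda'}}\,\partial(b^m)=h_{\widetilde{\lambda'}}\,h_{m-1}=h_{\tilde\lambda}$, since $\partial(b^m)=\tfrac{b^m-a^m}{b-a}=h_{m-1}(a,b)$ and $\tilde\lambda=\widetilde{\lambda'}\cup(m-1)$. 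This is exactly the claimed leading term $\tfrac{1}{\prod_i e_i!}\,h_{\tilde\lambda}\,d^{|\lambda|}$, and it is nonzero---so genuinely of top degree---precisely because $h_{\widetilde{\lambda'}}b^m$ is not symmetric in $a,b$.

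I expect the substitution step to be the real obstacle: one must rule out that the $d$-dependent parameter $q=m/d'$ raises the $d$-degree or interferes with extracting the leading term. Exploiting the $(a,b)$-homogeneity of $\alpha$ to see that only powers of $1/d'$ enter is the cleanest resolution; the only other point to confirm is that $\partial$ does not annihilate the candidate leading term, which the asymmetry of $b^m$ guarantees. A pleasant consistency check is that the answer does not depend on which part $m$ was chosen, matching the $m$-independence of $\tfrac{1}{\prod_i e_i!}h_{\tilde\lambda}d^{|\lambda|}$.
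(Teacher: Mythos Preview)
Your proposal is correct and follows precisely the approach the paper indicates: Theorem~\ref{fundclass-leading} is stated here as a consequence of the recursion (Theorem~\ref{recursion4Y}) and cited from \cite{feher_juhasz2023plucker} rather than reproved, and your argument---inducting on the length of $\lambda$, noting that the substitution $\alpha\mapsto\alpha_{m/d'}$ only introduces negative powers of $d'$, extracting the top $d$-term $b^md^m$ from the product, and computing $\partial(h_{\widetilde{\lambda'}}b^m)=h_{\widetilde{\lambda'}}h_{m-1}=h_{\tilde\lambda}$---is exactly the natural way to carry this out. All the details (the Leibniz rule, the factorial bookkeeping, and the nonvanishing check) are handled correctly.
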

So we see that for any partition $\lambda$
\begin{equation}\label{eq_ddegYlambda}
 \deg_d\left( \left[\,\overline Y_\lambda(d)\right] \right)=|\lambda|.
\end{equation}

The definition of the Kostka numbers,
\[ h_{\tilde{\lambda}}=\sum_{j=0}^{\lfloor|\tilde\lambda|/2\rfloor}
K_{(|\tilde{\lambda}|-j,j),\tilde{\lambda}} s_{(|\tilde{\lambda}|-j,j)} \]
together with the fact that
\[ K_{(|\tilde{\lambda}|-j,j),\tilde{\lambda}}=0 \Longleftrightarrow (|\tilde{\lambda}|-j,j) <\tilde{\lambda}
\left( \Longleftrightarrow |\tilde{\lambda}|-j < \lambda_1-1 \right) \]
immediately implies that
\begin{theorem}[{\cite[Thm.~6.1]{feher_juhasz2023plucker}}]\label{thrm_Kostkaleadingcoeffs}
Let $\lambda=(2^{e_2},\dots,r^{e_r})$ be a nonempty partition without $1$'s and
$j\leq |\tilde{\lambda}|-\lambda_1+1$ a nonnegative integer.
 Then
 \[ \text{the leading term of } \Pl_{\lambda;|\tilde{\lambda}|-2j}(d)=
 \frac{K_{(|\tilde{\lambda}|-j,j),\tilde{\lambda}}}{\prod_{i=2}^r \left( e_i!\right) } d^{|\lambda|}, \]
where the $K_{\mu,\nu}$'s denote Kostka numbers.
\end{theorem}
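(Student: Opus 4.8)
The plan is to extract the leading coefficient of each Pl\"ucker formula directly from the top $d$-degree part of $\left[\,\overline Y_\lambda(d)\right]$, which Theorem \ref{fundclass-leading} identifies as $\frac{1}{\prod_{i=2}^r(e_i!)}\,h_{\tilde\lambda}\,d^{|\lambda|}$. By Proposition \ref{Y-and-pluecker} the same class expands in the Schur basis as $\sum_j \Pl_{\lambda;|\tilde\lambda|-2j}(d)\,s_{|\tilde\lambda|-j,j}$, and by \eqref{eq_ddegYlambda} every coefficient $\Pl_{\lambda;|\tilde\lambda|-2j}(d)$ is a polynomial in $d$ of degree at most $|\lambda|$. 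Writing $\Pl_{\lambda;|\tilde\lambda|-2j}(d)=c_j\,d^{|\lambda|}+(\text{lower order in }d)$, the top $d$-degree part of this Schur expansion is $\bigl(\sum_j c_j\,s_{|\tilde\lambda|-j,j}\bigr)d^{|\lambda|}$. First I would compare the two expressions for the top-degree part and, using the linear independence of the Schur polynomials $s_{|\tilde\lambda|-j,j}$, reduce the problem to expanding $h_{\tilde\lambda}$ in the same basis.

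The second step is that expansion. By the definition of Kostka numbers specialized to the two Chern-root variables $a,b$ (where Schur functions indexed by partitions with more than two parts vanish, leaving only two-row shapes), one has $h_{\tilde\lambda}=\sum_{j=0}^{\lfloor|\tilde\lambda|/2\rfloor}K_{(|\tilde\lambda|-j,j),\tilde\lambda}\,s_{(|\tilde\lambda|-j,j)}$. Matching the coefficient of $s_{|\tilde\lambda|-j,j}$ in the top-degree comparison from the previous paragraph then yields $c_j=\dfrac{K_{(|\tilde\lambda|-j,j),\tilde\lambda}}{\prod_{i=2}^r(e_i!)}$; in other words, the coefficient of $d^{|\lambda|}$ in $\Pl_{\lambda;|\tilde\lambda|-2j}(d)$ is exactly this rational number, for every $j$ in the range $0\le j\le\lfloor|\tilde\lambda|/2\rfloor$.

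It remains to verify that this coefficient is genuinely nonzero under the hypothesis $j\le|\tilde\lambda|-\lambda_1+1$, so that $c_j\,d^{|\lambda|}$ is the actual \emph{leading} term rather than a vanishing top coefficient. Here I would invoke the standard nonvanishing criterion for Kostka numbers: $K_{\mu,\nu}\neq 0$ if and only if $\mu\ge\nu$ in dominance order. For the two-row shape $\mu=(|\tilde\lambda|-j,j)$ tested against $\nu=\tilde\lambda$, all dominance inequalities beyond the first are automatic, since the partial sums of a two-part partition already reach $|\tilde\lambda|$; the condition therefore collapses to the single inequality $|\tilde\lambda|-j\ge\tilde\lambda_1=\lambda_1-1$, equivalently $j\le|\tilde\lambda|-\lambda_1+1$. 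Thus precisely in the stated range the Kostka number, and hence $c_j$, is nonzero (indeed positive), confirming that the leading term equals $\frac{K_{(|\tilde\lambda|-j,j),\tilde\lambda}}{\prod_{i=2}^r(e_i!)}\,d^{|\lambda|}$.

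Since Theorem \ref{fundclass-leading} supplies the only substantial analytic input---the precise top-degree part of $\left[\,\overline Y_\lambda(d)\right]$---the remaining argument is essentially bookkeeping, and the one point requiring genuine care is the dominance-order computation governing the nonvanishing of the Kostka number. That verification is exactly the step that separates the range $j\le|\tilde\lambda|-\lambda_1+1$ handled here from the complementary range, where the Kostka coefficient vanishes and the true leading term sits in strictly lower $d$-degree and must be computed by other means.
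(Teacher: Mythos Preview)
Your proposal is correct and follows essentially the same approach as the paper: the paper also derives the result by combining Theorem~\ref{fundclass-leading} with the Schur expansion $h_{\tilde\lambda}=\sum_j K_{(|\tilde\lambda|-j,j),\tilde\lambda}\,s_{(|\tilde\lambda|-j,j)}$ and the dominance criterion $K_{(|\tilde\lambda|-j,j),\tilde\lambda}=0 \Leftrightarrow |\tilde\lambda|-j<\lambda_1-1$. Your write-up is somewhat more explicit about why the dominance condition reduces to the single inequality on the first part, but the argument is the same.
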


\medskip

Theorem \ref{thrm_leadingtermPl} strenghtens both \eqref{eq_YmSchurcoeffleadingterm} and
Theorem \ref{thrm_Kostkaleadingcoeffs}: It provides a description for the leading terms of all the
generalized Pl\"{u}cker formulas $\Pl_{\lambda;|\tilde{\lambda}|-2j}(d)$.

\section{Leading terms of generalized Pl\"ucker formulas: the proof}\label{sec_proofdegreesofPlnumbers}
This Section is dedicated to the proof of Theorem \ref{thrm_leadingtermPl}.
Our proof results from a quite technical, but purely algebraic analysis of the recursive formula in Theorem
\ref{recursion4Y}; it contains no further geometric ideas.

To make it more concise, let us use the shorthand
$\rho \vdash k$ for partitions $\rho$ of $k$ with length at most 2.
The projection $\pi_2$ onto the second coordinate identifies partitions $\rho \vdash k$ with
elements of the set $\left\{ 0,\dots, \lfloor k/2 \rfloor \right\}$.
We use this identification to introduce an ordering on
$\left\{ \left. \rho \, \right| \rho \vdash k  \right\}$:
\[ \left( k,0 \right) \leq \left( k-1,1 \right) \leq \dots \leq \left( \left\lceil\frac{k}{2}\right\rceil,
\left\lfloor\frac{k}{2}\right\rfloor \right). \]
Also, we can take the differences of $\pi_2$-projections if we want to express ``distance'' of partitions of
$k$.

\medskip

The proof of Theorem \ref{thrm_leadingtermPl} relies on a statement that directly reflects our recursive
formula:
\begin{theorem}\label{thrm_leadingtermfromproduct}
Let $\lambda=\left( 2^{e_2},\dots,r^{e_r} \right)$ be a partition of length at least two. The class of the
corresponding coincident root stratum can be expressed in Schur polynomials
\[ \left[\, \overline Y_\lambda(d)\right]
= \sum_{\rho \vdash c} r_\rho(d) s_\rho \quad \left(c:=| \tilde{\lambda} |=\codim\left(Y_\lambda \subset \Pol^{d}\left( \mathbb{C}^2\right) \right) \right),\]
where  $r_\rho \in \mathbb{Q}\left[ d \right] $.

Let $m$ be any member of $\lambda$ $(e_m \neq 0)$ and
denote by $\lambda'=\left( 2^{e_2},\dots,m^{e_m-1},\dots,r^{e_r}\right)$
the partition $\lambda$ minus $m$.
\begin{enumerate}[label=\roman*)]
\item\label{item_thrmi}
    Then for any $\rho \vdash c$ the coefficients of $s_\rho$ in
    \[     \left[ \, \overline Y_\lambda(d) \right]
      \text{ and }
      \frac{1}{e_m}\left[\, \overline Y_m(d) \right]
    \left[ \, \overline Y_{\lambda'}(d)\right] \]
    have the same leading term.
  \item\label{item_thrmii}
    If $m$ is such that $m-2 \le c':=|\tilde{\lambda'}|=\codim(Y_{\lambda'}\subset \Pol^{d}\left(
    \mathbb{C}^2\right))$, (e.g. $m=\min(\lambda)$), then for any $\rho \vdash c$ the coefficients of
    $s_\rho$ in
    \[     \left[ \, \overline Y_\lambda(d) \right]
      \text{ and }
      \frac{1}{e_m} p_{(m-1,0)}(d) s_{(m-1,0)}
    \left[ \, \overline Y_{\lambda'}(d) \right] \]
    have the same leading term, where $\left[ \, \overline Y_{m}(d)\right]=
    \sum_{\mu \vdash m-1} p_\mu(d) s_\mu$.
  \end{enumerate}
\end{theorem}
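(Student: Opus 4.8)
The plan is to analyze the recursion of Theorem \ref{recursion4Y} degree by degree in $d$, showing that the ``lower order'' effects of the substitution $\alpha_{m/d'}$ and of using $\left[\,\overline Y_{\lambda'}(d')\right]$ rather than $\left[\,\overline Y_{\lambda'}(d)\right]$ do not disturb the leading terms of the Schur coefficients. Concretely, by Theorem \ref{recursion4Y} with the chosen $m$,
\[ \left[\,\overline Y_\lambda(d)\right]=\frac{1}{e_m}\partial\Big(\left[\,\overline Y_{\lambda'}(d')\right]_{m/d'}\prod_{i=0}^{m-1}\big(ia+(d-i)b\big)\Big), \]
while the product $\left[\,\overline Y_m(d)\right]\left[\,\overline Y_{\lambda'}(d)\right]$ on the right-hand side of \ref{item_thrmi} is, by the $\lambda=(m)$ case of the same recursion, built from $\partial\big(\prod_{i=0}^{m-1}(ia+(d-i)b)\big)$ times $\left[\,\overline Y_{\lambda'}(d)\right]$. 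So the first step is to understand the single-partition ingredient $\prod_{i=0}^{m-1}(ia+(d-i)b)$: expand it as $\sum_t c_t(d)\,a^t b^{m-t}$ with $c_t(d)$ a polynomial in $d$ of degree $m-t$ and leading coefficient the Stirling number $\stir{m}{m-t}$ (this is exactly the content behind Theorem \ref{m-flex-coeffs}). The top $d$-degree part of this polynomial, monomial by monomial, is $\sum_t \stir{m}{m-t}\,d^{m-t}a^t b^{m-t}$, i.e. a ``diagonal'' symmetric polynomial.

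The second step is to control the three discrepancies between $\left[\,\overline Y_\lambda(d)\right]$ and $\frac1{e_m}\left[\,\overline Y_m(d)\right]\left[\,\overline Y_{\lambda'}(d)\right]$: (a) the Chern-root shift $\alpha\mapsto\alpha_{m/d'}$ applied to $\left[\,\overline Y_{\lambda'}(d')\right]$; (b) replacing $d'=d-m$ by $d$ in the argument of $Y_{\lambda'}$; and (c) the fact that $\partial$ of a product is not the product of a $\partial$ with the other factor. For (b), since $\left[\,\overline Y_{\lambda'}(d)\right]\in\Q[c_1,c_2,d]$ has $d$-degree $c'$ in each Schur coefficient's relevant sense (Theorem \ref{fundclass-leading}, \eqref{eq_ddegYlambda}), shifting $d\mapsto d-m$ changes a degree-$N$ polynomial only in strictly-lower-order terms, so top $d$-degree parts agree. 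For (a), note $q=m/d'\sim m/d\to 0$; writing $\alpha_q(a,b)=\alpha(a(1+q),b+qa)$ and expanding in powers of $q$, each extra power of $q$ costs one power of $d$ in the denominator, hence lowers $d$-degree — so modulo lower order, $\left[\,\overline Y_{\lambda'}(d')\right]_{m/d'}$ may be replaced by $\left[\,\overline Y_{\lambda'}(d)\right]$ itself. For (c), the key is that $\partial$ is $\Z[a+b,ab]$-linear (a divided-difference/Leibniz computation: $\partial(\beta\gamma)=\partial(\beta)\gamma+\beta^{\mathrm{sw}}\partial(\gamma)$ where $\beta^{\mathrm{sw}}(a,b)=\beta(b,a)$), and $\left[\,\overline Y_{\lambda'}(d)\right]$ is already symmetric, so $\partial\big(\text{sym}\cdot P\big)=\text{sym}\cdot\partial(P)$ exactly. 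Combining, the top $d$-degree part of $\left[\,\overline Y_\lambda(d)\right]$ equals that of $\frac1{e_m}\left[\,\overline Y_{\lambda'}(d)\right]\cdot\partial\big(\prod_{i=0}^{m-1}(ia+(d-i)b)\big)=\frac1{e_m}\left[\,\overline Y_m(d)\right]\left[\,\overline Y_{\lambda'}(d)\right]$, which is \ref{item_thrmi} once one checks that leading terms survive the passage to Schur coefficients (they do, since multiplication by the symmetric Schur-positive ingredients and $\partial$ are $d$-degree preserving on the relevant graded pieces and the product structure constants are $d$-independent).

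For part \ref{item_thrmii}, the extra hypothesis $m-2\le c'$ is what lets us truncate $\left[\,\overline Y_m(d)\right]=\sum_{\mu\vdash m-1}p_\mu(d)s_\mu$ to just its leading Schur term $p_{(m-1,0)}(d)s_{(m-1,0)}$. The point is a degree bookkeeping in the Littlewood–Richardson expansion $s_\mu s_\nu=\sum_\kappa c^\kappa_{\mu\nu}s_\kappa$ over two-variable Schur polynomials (where only $\kappa$ of length $\le 2$ survive): for a fixed target $\rho\vdash c=c'+(m-1)$, the contribution of $s_\mu$ with $\mu=(m-1-i,i)$, $i\ge1$, to the coefficient of $s_\rho$ involves $p_\mu(d)$ whose $d$-degree is $m-1-i$ (from \eqref{eq_YmSchurcoeffleadingterm}, the leading term of the coefficient of $s_{m-1-i,i}$ in $\left[\,\overline Y_m(d)\right]$ is $\stir{m}{m-i}d^{m-i}$ — so degree $m-i\le m-1$, strictly below $m-1$ when... ) — I will need to check carefully that, when $m-2\le c'$, every term coming from $i\ge1$ is of strictly lower $d$-degree than the $i=0$ term in each $s_\rho$-coefficient, using that $\left[\,\overline Y_{\lambda'}(d)\right]$ contributes its own top degree $c'$ only in its leading Schur coefficients and that the two-row LR rule forces the relevant partition of $\lambda'$ to move ``down'' in the ordering, costing $d$-degree. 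The main obstacle I anticipate is precisely this last bookkeeping: making the ``every other term is lower order'' claim rigorous requires combining the degree profile of $\left[\,\overline Y_{\lambda'}(d)\right]$ across its Schur coefficients (not just its top one) with the two-variable Littlewood–Richardson rule, and verifying that the constraint $m-2\le c'$ is exactly tight for the comparison to go through. Once \ref{item_thrmii} is established, Theorem \ref{thrm_leadingtermPl} follows by induction on the length of $\lambda$: take $m=\min(\lambda)$, use \ref{item_thrmii} to reduce to $\left[\,\overline Y_m(d)\right]$'s leading Schur term (governed by Stirling numbers via Theorem \ref{m-flex-coeffs}) times $\left[\,\overline Y_{\lambda'}(d)\right]$ (governed inductively by Kostka numbers via Theorems \ref{fundclass-leading} and \ref{thrm_Kostkaleadingcoeffs}), and read off the two regimes $j\le|\tilde\lambda|-\lambda_1+1$ versus $j>|\tilde\lambda|-\lambda_1+1$ from where the Schur-coefficient degree drop kicks in.
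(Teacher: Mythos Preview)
Your argument for part \ref{item_thrmi} has a genuine gap. What you establish (via (a), (b), (c)) is that the \emph{global} top $d$-degree parts of $[\overline Y_\lambda(d)]$ and $\frac{1}{e_m}[\overline Y_m(d)][\overline Y_{\lambda'}(d)]$ agree --- essentially recovering Theorem~\ref{fundclass-leading}. But part~\ref{item_thrmi} asserts something stronger: that the leading term of \emph{each individual} Schur coefficient $r_\rho(d)$ matches. These are not the same claim, because for $\rho$ beyond the threshold $\vartheta(\lambda)$ the degree of $r_\rho(d)$ is strictly less than $|\lambda|$ (this is the whole point of Theorem~\ref{thrm_rrholeadterm}). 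For such $\rho$, your observation that ``each power of $q=m/d'$ costs one power of $d$'' only bounds the error terms by $d$-degree $|\lambda|-1$, which can still exceed $\deg r_\rho$. Concretely, in the paper's expansion $[\overline Y_\lambda(d+m)]=\frac{1}{e_m}\sum_t (m/d)^t A_t B_t$ with $A_0B_0=[\overline Y_m(d+m)][\overline Y_{\lambda'}(d)]$, you need that for every $\rho$ and every $t\ge 1$ the $s_\rho$-coefficient of $(m/d)^tA_tB_t$ has degree strictly below that of the $s_\rho$-coefficient of $A_0B_0$ --- and the latter may already be well below $|\lambda|$.

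The paper closes this gap by a fairly intricate analysis: it uses the inductive hypothesis (Theorem~\ref{thrm_rrholeadterm} for $\lambda'$) to pin down the degree profile of all the $q_\nu$'s appearing in the $B_t$'s, organizes the monomials of the $A_t$'s into ``diagonal'' and ``vertical'' line segments along which $\deg p_\mu$ and its sign are controlled, and introduces a function $f_\rho(\mu)$ tracking which $q_\nu$ dominates the $s_\rho$-contribution of each $p_\mu s_\mu B_t$. Comparing $f_\rho$ along segments then yields the needed strict inequality Schur-coefficient by Schur-coefficient. You correctly anticipate exactly this kind of bookkeeping for part~\ref{item_thrmii} (``the main obstacle I anticipate\ldots''), but the same obstacle is already present in part~\ref{item_thrmi}; indeed the paper proves \ref{item_thrmi}, \ref{item_thrmii}, and Theorem~\ref{thrm_rrholeadterm} by a single simultaneous induction precisely because the degree profile of $[\overline Y_{\lambda'}(d)]$ across \emph{all} its Schur coefficients is needed as input. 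Your handling of (b) and (c) is fine, and your identification of $A_0B_0$ as the main term is correct --- what is missing is the per-$\rho$ degree comparison against the $t\ge 1$ terms.
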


Note that Theorem \ref{recursion4Y} would suggest that we compare 
$\left[ \, \overline Y_{\lambda}(d+m)\right]$  with \\
$1/e_m \left[ \, \overline Y_{m}(d+m)\right] \left[ \, \overline Y_{\lambda'}(d)\right]$.
However, the ``$+m$'' translation doesn't change the leading term, hence its omission from
the above Theorem (and from most of this Section).

We will prove Theorem \ref{thrm_leadingtermfromproduct} together with
the following, slightly reformulated, equivalent version of Theorem
\ref{thrm_leadingtermPl} that better suits the equivariant setting (see Proposition \ref{Y-and-pluecker}):
\begin{theorem} \label{thrm_rrholeadterm}
  Let $\lambda=(2^{e_2},\dots,r^{e_r})$ be a nonempty partition without 1's.
  Let $\lambda_1$ be its biggest element, and denote
  by $c=|\tilde{\lambda}|$ the codimension of the corresponding coincident root stratum
  $Y_\lambda \subset \Pol^{d}\left( \mathbb{C}^2\right)$.
  For the coefficients $r_\rho \in \mathbb{Q}[d]$ in the class $ \left[ \, \overline Y_{\lambda}(d) \right]=
  \sum_{\rho \vdash c} r_\rho(d) s_\rho $ there exists a ``threshold'' 
  \[ \vartheta(\lambda)=
  \left( \max \left(  \lambda_1-1, \left\lceil \frac{c}{2} \right\rceil \right),
\min \left( c-\lambda_1+1, \left\lfloor \frac{c}{2} \right\rfloor \right) \right) \]
in the sense that
  \begin{multline*}
    \text{the leading term of } r_{\rho}(d)=\\
    \frac{1}{\prod_{i=2}^r \left( e_i!\right)}
    \begin{cases} 
       K_{\rho,\tilde{\lambda}}\, d^{|\lambda|} &
      \text{if }  \rho \leq \vartheta(\lambda),
      \\[1em]
       \stir{\lambda_1}{\lambda_1-\left( \pi_2(\rho)-\pi_2(\vartheta(\lambda)) \right)}
       d^{|\lambda|-\left( \pi_2(\rho)-\pi_2(\vartheta(\lambda)) \right)} &
      \text{if }  \rho > \vartheta(\lambda),
    \end{cases}
  \end{multline*}
  where the $K_{\nu,\mu}$'s denote Kostka numbers and the $\stir{m}{m-k}$'s are Stirling numbers of the 
  first kind.
\end{theorem}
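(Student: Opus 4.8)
The plan is to prove Theorems~\ref{thrm_leadingtermfromproduct} and~\ref{thrm_rrholeadterm} simultaneously by induction on the length $k$ of $\lambda$; Theorem~\ref{thrm_leadingtermPl} will then follow by rewriting Theorem~\ref{thrm_rrholeadterm} through Proposition~\ref{Y-and-pluecker}, once one observes that the two formulations of the threshold agree — if $j>\pi_2(\vartheta(\lambda))$ then necessarily $\pi_2(\vartheta(\lambda))=|\tilde\lambda|-\lambda_1+1$, and $K_{(|\tilde\lambda|-j,j),\tilde\lambda}=0$ exactly when $j>|\tilde\lambda|-\lambda_1+1$. The base case $k=1$, $\lambda=(m)$, is Theorem~\ref{m-flex-coeffs}: since $\vartheta((m))=(m-1,0)$, its corollary~\eqref{eq_YmSchurcoeffleadingterm} already has the asserted shape, the component $s_{(m-1,0)}$ falling under the Kostka case ($K_{(m-1,0),(m-1)}=1$) and the components $s_{(m-1-i,i)}$, $i\ge1$, under the Stirling case. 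I will also use that the ``Kostka part'' $\rho\le\vartheta(\lambda)$ of Theorem~\ref{thrm_rrholeadterm} is already Theorem~\ref{thrm_Kostkaleadingcoeffs}, so the genuinely new content is the case $\rho>\vartheta(\lambda)$.

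For the inductive step I fix $\lambda$ of length $k\ge2$, set $m=\min\lambda$ and $\lambda'=\lambda-m$ (of length $k-1$, so both theorems are available for $\lambda'$), and write $Y'=[\,\overline Y_{\lambda'}(d)\,]=\sum_\nu r'_\nu(d)s_\nu$, $P_m=\prod_{i=0}^{m-1}(ia+(d-i)b)$, so that $[\,\overline Y_\lambda(d)\,]=\tfrac1{e_m}\partial\bigl(Y'_{m/d'}P_m\bigr)$ by Theorem~\ref{recursion4Y}. \emph{Step 1 — pass from the recursion to a product (Theorem~\ref{thrm_leadingtermfromproduct}\ref{item_thrmi}).} Since $Y'$ is symmetric, $\partial(Y'P_m)=Y'\partial(P_m)=Y'[\,\overline Y_m(d)\,]$, so it suffices to show that $\partial\bigl((Y'_{m/d'}-Y')P_m\bigr)$ contributes, in every Schur component $s_\rho$, a polynomial of $d$-degree strictly below that of the $s_\rho$-component of $Y'[\,\overline Y_m(d)\,]$. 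For $\rho\le\vartheta(\lambda)$ this is easy: that component has full degree $|\lambda|$ (by Theorems~\ref{fundclass-leading} and~\ref{thrm_Kostkaleadingcoeffs} together with the Pieri identity $h_{\tilde\lambda}=h_{m-1}h_{\tilde{\lambda'}}$), whereas the error has $\deg_d\le|\lambda|-1$ because the shift $d\mapsto d'=d-m$ and the substitution $\alpha\mapsto\alpha_{m/d'}$ (parameter $m/d'=m/(d-m)=O(1/d)$) each strictly lower the $d$-degree. For $\rho>\vartheta(\lambda)$ the target component has $d$-degree strictly less than $|\lambda|$, with the gap growing as $\pi_2(\rho)$ increases, so the crude bound $\deg_d\le|\lambda|-1$ on the error is no longer enough; here one needs the sharper \emph{bigraded} estimate that every monomial $d^es_\rho$ occurring in $\partial\bigl((Y'_{m/d'}-Y')P_m\bigr)$ satisfies $e+\pi_2(\rho)<|\lambda|+\pi_2(\vartheta(\lambda))$. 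Proving this is the crux: one writes out the substitution explicitly — e.g.\ via $h_k((1+q)a,b+qa)=\sum_{r\ge0}\binom{k+1}{r}q^ra^rh_{k-r}(a,b)$ — inserts the inductive description of $Y'$, and checks that the unit of $d$-degree ``spent'' by each of the two corrections is never regained by the $\pi_2$-shearing inherent in multiplication by $P_m$ and in the divided difference. This is the step I expect to be the main obstacle; once it is done, \ref{item_thrmi} (for $m=\min\lambda$, and by the same estimate for any $m\in\lambda$) is established.

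\emph{Step 2 — from \ref{item_thrmi} to \ref{item_thrmii}.} Write $[\,\overline Y_m(d)\,]=\sum_i p_{(m-1-i,i)}(d)s_{(m-1-i,i)}$ (the sum over partitions of $m-1$); by Theorem~\ref{m-flex-coeffs}, $\deg_d p_{(m-1-i,i)}=m-i$, and $p_{(m-1,0)}(d)=d(d-1)\cdots(d-m+1)$ (extracted from $P_m$, the coefficient of $a^m$ in $P_m$ being zero). In the product $[\,\overline Y_m(d)\,]Y'$ the $i$-th summand contributes $p_{(m-1-i,i)}(d)\sum_l r'_{(c'-l,l)}(d)$ to the coefficient of $s_\rho$, the sum over the interval of $l$ prescribed by the two-row Pieri rule, whose left endpoint $l_{\min}^{(i)}$ is non-decreasing in $i$. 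Since $\deg_d r'_{(c'-l,l)}$ is non-increasing in $l$ and drops by at most one per step (inductive hypothesis for $\lambda'$), this contribution has $d$-degree at most $(m-i)+\deg_d r'_{(c'-l_{\min}^{(i)},l_{\min}^{(i)})}\le m+\deg_d r'_{(c'-l_{\min}^{(0)},l_{\min}^{(0)})}-i$, strictly below the $i=0$ term for $i\ge1$. Hence only $p_{(m-1,0)}(d)s_{(m-1,0)}$ contributes to the leading term of any Schur coefficient, which is \ref{item_thrmii}; the hypothesis $m-2\le c'$ (automatic for $m=\min\lambda$) is what makes the Pieri intervals behave as just used.

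\emph{Step 3 — deduce Theorem~\ref{thrm_rrholeadterm}.} By \ref{item_thrmii} the leading term of $r_\rho(d)$ equals that of $\tfrac1{e_m}d(d-1)\cdots(d-m+1)\cdot c_\rho(d)$, where $c_\rho(d)$ is the coefficient of $s_\rho$ in $h_{m-1}Y'$; the two-row Pieri rule gives $c_\rho(d)=\sum_l r'_{(c'-l,l)}(d)$, $l$ running over $l_{\min}\le l\le\min(\pi_2(\rho),c'-\pi_2(\rho))$ with $l_{\min}=\max(0,\pi_2(\rho)-m+1)$. As $\deg_d r'_{(c'-l,l)}$ is non-increasing in $l$, the top-degree part of $c_\rho$ comes from an initial block of $l$'s, and a short computation — using $c'=|\tilde\lambda|-m+1$ and the elementary inequality $\lfloor c'/2\rfloor+m-1\ge\lfloor|\tilde\lambda|/2\rfloor$ — shows that $l_{\min}\le\pi_2(\vartheta(\lambda'))\iff\rho\le\vartheta(\lambda)$, and that $\pi_2(\vartheta(\lambda))=\pi_2(\vartheta(\lambda'))+m-1$ whenever $\rho>\vartheta(\lambda)$. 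In the Kostka case ($\rho\le\vartheta(\lambda)$) every $l$ in the flat range contributes $\tfrac1{\prod e'_i!}K_{(c'-l,l),\tilde{\lambda'}}d^{|\lambda'|}$, and — extending the sum by the terms with $l>\pi_2(\vartheta(\lambda'))$, whose Kostka numbers vanish, and invoking $h_{\tilde\lambda}=h_{m-1}h_{\tilde{\lambda'}}$ — the top-degree part of $c_\rho$ equals $\tfrac1{\prod e'_i!}K_{\rho,\tilde\lambda}d^{|\lambda'|}$; multiplying by $\tfrac1{e_m}d^m$ and using $\prod e_i!=e_m\prod e'_i!$, $|\lambda|=m+|\lambda'|$ recovers the Kostka formula (consistently with Theorem~\ref{thrm_Kostkaleadingcoeffs}). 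In the Stirling case ($\rho>\vartheta(\lambda)$), where $l_{\min}>\pi_2(\vartheta(\lambda'))$ and $\deg_d r'_{(c'-l,l)}$ strictly decreases with $l$, only $l=l_{\min}$ contributes to the top degree, namely $\tfrac1{\prod e'_i!}\stir{\lambda_1}{\lambda_1-(l_{\min}-\pi_2(\vartheta(\lambda')))}d^{|\lambda'|-(l_{\min}-\pi_2(\vartheta(\lambda')))}$ (using $\lambda'_1=\lambda_1$); since $l_{\min}-\pi_2(\vartheta(\lambda'))=\pi_2(\rho)-\pi_2(\vartheta(\lambda))$, multiplying by $\tfrac1{e_m}d^m$ yields exactly the Stirling formula, closing the induction.
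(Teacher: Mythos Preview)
Your overall architecture matches the paper's: simultaneous induction on the length of $\lambda$, base case $\lambda=(m)$ via Theorem~\ref{m-flex-coeffs}, and for the step the decomposition of the recursion into the main term $\tfrac{1}{e_m}[\,\overline Y_m(d)\,][\,\overline Y_{\lambda'}(d)\,]$ plus the error $\tfrac{1}{e_m}\partial\bigl((Y'_{m/d'}-Y')P_m\bigr)$. Your Steps~2 and~3 correspond closely to parts~(F) and~(G) of the paper's argument; there you use the same ingredients (two-row Pieri intervals, monotonicity and positivity of the $r'_\nu$ from the inductive hypothesis, the computation $l_{\min}=\max(0,\pi_2(\rho)-m+1)$, and the threshold bookkeeping), and these steps are essentially correct up to details you gloss over (e.g.\ you silently use positivity of leading coefficients to exclude cancellation in $\sum_l r'_{(c'-l,l)}$, and you need---as the paper proves in~(F)---that the $i=0$ interval is nonempty for every $\rho$, which is exactly where the hypothesis $m-2\le c'$ enters).

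The genuine gap is Step~1 in the range $\rho>\vartheta(\lambda)$. You correctly isolate what must be shown---that the error has, in each Schur component $s_\rho$, $d$-degree strictly below that of the main term---and you even name it ``the crux'' and ``the main obstacle,'' but the outline you give (``write out the substitution explicitly\ldots insert the inductive description of $Y'$\ldots check that the unit of $d$-degree spent is never regained by the $\pi_2$-shearing'') does not constitute a proof. The paper does \emph{not} prove your proposed bigraded bound $e+\pi_2(\rho)<|\lambda|+\pi_2(\vartheta(\lambda))$; instead it proves the direct comparison~\eqref{eq_thrm1intermsoft}, term by term in $t$. This is the bulk of the proof (parts~(A)--(E)): one expands the error as $\sum_{t\ge1}(m/d)^tA_tB_t$ with $A_t=\partial(a^tP_m)$, decomposes each $A_t$ into pieces $\pm e_f$ along ``diagonal'' and ``vertical'' line segments in the index set $\{\mu\vdash m'+t\}$, introduces a function $f_\rho(\mu)$ selecting the dominant $\nu$ in the product $s_\mu B_t$, compares $\pi_2(f_\rho(\mu))$ along adjacent segment nodes, and runs an induction on $t$ that walks each $\mu$ back to the $t=0$ row. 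The issue is precisely that multiplication by $a^t$ before $\partial$ can shift $\pi_2$ by up to $t$, so the ``one unit of $d$-degree per $t$'' from $(m/d)^t$ can in principle be compensated; ruling this out needs the segment-by-segment tracking, and the sign/degree analysis of the $p_\mu$'s (some $p_\mu$ equal $e_f-e_g$ with $f>g$, and one must know which $e$ controls the degree). Your sketch contains none of this machinery.

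One minor correction: the shift $d\mapsto d'=d-m$ does \emph{not} lower the $d$-degree (it is an affine substitution); only the passage from $Y'$ to $Y'_{m/d'}-Y'$ does, via the factors $(m/d')^t$ for $t\ge1$. Your conclusion for $\rho\le\vartheta(\lambda)$ is still correct, but the justification should drop the reference to the shift.
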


In particular, the coefficients $r_\rho(d)$ have positive leading coefficients,
which can also be seen by their interpretation as enumerative problems, see Proposition \ref{Y-and-pluecker}.

The following figure illustrates the degree distribution of the coefficients of Schur polynomials in
a class $\left[ \, \overline Y_{\lambda}(d) \right]$.

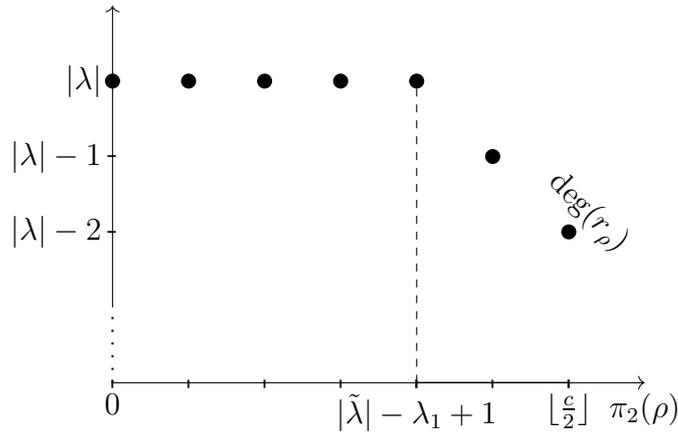
\begin{figure}[H]
  \centering
 \pgfmathsetmacro{\addlambda}{16}
\pgfmathsetmacro{\lambdamax}{10}
\pgfmathsetmacro{\c}{13}
\pgfmathsetmacro{\rhobtill}{floor(\c/2)}
\pgfmathsetmacro{\thetalambda}{\c-\lambdamax+1}

\pgfmathsetmacro{\ycutto}{1}
\pgfmathsetmacro{\ycutdiff}{12}

\begin{tikzpicture}
  \draw[loosely dotted,thick] (0,0) -- (0,\ycutto);
\draw[->]
(0,\ycutto)  --
(0,\addlambda-\ycutdiff-2) node [left] {$|\lambda|-2$} node {-} --
(0,\addlambda-\ycutdiff-1) node [left] {$|\lambda|-1$} node {-} --
(0,\addlambda-\ycutdiff) node [left] {$|\lambda|$} node {-} --
(0,\addlambda-\ycutdiff+1) ;

\draw[->]
(0,0) node [below] {$0$} --
(\rhobtill,0) node [rotate=90] {-} node [below] {$\lfloor \frac{c}{2} \rfloor$} --
(\thetalambda,0) node [rotate=90] {-} node [below] {$|\tilde{\lambda}|-\lambda_1+1$} --
(\rhobtill+1,0) node [below] {$\pi_2(\rho)$};
\foreach \rhob in {0,...,\rhobtill}
{
\draw (\rhob,0) node [rotate=90] {-} ;
}

\foreach \rhob in {0,...,\rhobtill}
{
\pgfmathsetmacro{\degrrho}{min(\addlambda,\addlambda+\thetalambda-\rhob)}
\draw (\rhob,\degrrho-\ycutdiff) node [circle,fill,inner sep=2pt] {};
}

\draw[dashed] (\thetalambda,0) -- (\thetalambda,\addlambda-\ycutdiff);

\pgfmathsetmacro{\degrrholabelb}{\addlambda-\ycutdiff-(\rhobtill-\thetalambda)}
\draw (\rhobtill,\degrrholabelb) node [above,rotate=-45] {$\deg(r_\rho)$};

\end{tikzpicture}
 \caption{Degrees of coefficients $r_\rho(d)$ in $\left[ \, \overline Y_{\lambda}(d) \right]=
 \sum_{\rho \vdash c} r_\rho(d) s_\rho $ for $\lambda=(10,3,3)$}
  \label{tikz_degreedrop}
\end{figure}

\begin{proof}
  We will prove Theorem \ref{thrm_leadingtermfromproduct} and Theorem \ref{thrm_rrholeadterm} simultaneously
  using induction on
  the length of the partition $\lambda$.
  Throughout the proof we will keep on using the following shorthands for codimensions:
  \[ c=|\tilde{\lambda}|=\codim(Y_\lambda), \quad c'=|\tilde{\lambda'}|=\codim(Y_{\lambda'}),
  \quad m'=m-1=\codim(Y_m). \]

  Induction starts with $\lambda=(m)$, where Theorem \ref{thrm_leadingtermfromproduct} is empty,
  $\vartheta(m)=m'-m+1=0$, and, by \eqref{eq_YmSchurcoeffleadingterm},
  the coefficients of the Schur polynomials in
\[\left[ \, \overline Y_{m}(d)\right]=\sum_{\mu \vdash m'} p_{\mu}(d) s_{\mu} \]
have the expected leading term, proving Theorem \ref{thrm_rrholeadterm}.

The induction step is based on the recursion of Theorem \ref{recursion4Y}.
Accordingly,
choose an element $m$ of $\lambda$, and let $\lambda'=\left( 2^{e_2},\dots,m^{e_m-1},\dots,r^{e_r}\right)$
be as in Theorem \ref{recursion4Y} (or \ref{thrm_leadingtermfromproduct}).
The partition $\lambda'$ has length one less than $\lambda$, so
we can assume that Theorem \ref{thrm_rrholeadterm}
holds for the coefficients $q_{\nu}(d)$ in
\[ \left[\, \overline Y_{\lambda'}(d)\right]
= \sum_{\nu \vdash c'} q_\nu(d) s_\nu .  \]

The substitutions $a \mapsto a + (m/d)a$ and $b \mapsto b + (m/d)a$ in the recursive formula of Theorem \ref{recursion4Y} can be divided into two steps:
\begin{equation}\label{eq_CRSrecursion}
  \begin{split}
     \left[ \, \overline Y_\lambda(d+m) \right]&=
     \frac{1}{e_m} \partial \left(
       \left[ \, \overline Y_{\lambda'}(d) \right]
       \big\rvert_{\stackon{$\hspace{.075em} \scriptstyle b \mapsto b + (m/d)a$}{$\scriptstyle a  \mapsto  a+(m/d)a$}}
     \cdot \prod_{i=0}^{m-1} \left( ia +(d+m-i)b \right) \right)\\
     &=\frac{1}{e_m} \partial \left(
       \left[ \, \overline Y_{\lambda'}(d) \right]
       \big\rvert_{\stackon{$\hspace{.075em} \scriptstyle b \mapsto b+x$}{$\scriptstyle a \mapsto a+x$}} \bigg\vert_{x
       \mapsto (m/d)a}
     \cdot \prod_{i=0}^{m-1} \left( ia +(d+m-i)b \right) \right).
  \end{split}
\end{equation}
Let us keep the variable $x$ for a moment, and define $B_t$ ($t=0,\dots,c'$) as the
coefficient of $x^t$ in
\begin{equation}\label{eq_Bt_def}
\left[ \, \overline Y_{\lambda'}(d) \right]
  \big\rvert_{\stackon{$\hspace{.075em} \scriptstyle b \mapsto b+x$}{$\scriptstyle a \mapsto a+x$}}=
\sum_{t=0}^{c'} B_t x^t.
\end{equation}
The polynomials $B_t \in \mathbb{Q}[a,b;d]^{S_2}$ are symmetric in $a,b$ and have $\{a,b\}$-degree $c'-t$. 
Note that $B_0=\left[ \, \overline Y_{\lambda'}(d) \right]$.

We can expand \eqref{eq_CRSrecursion} as
\begin{equation}\label{eq_Ylambda_texpansion}
  \begin{split}
     \left[ \, \overline Y_\lambda(d+m) \right]&=
     \frac{1}{e_m} \partial \left( \sum_{t=0}^{c'} \left( B_t \left( \frac{m}{d}a \right)^t \right)
     \cdot \prod_{i=0}^{m-1} \left( ia +(d+m-i)b \right) \right)
     \\
     &=\frac{1}{e_m} \sum_{t=0}^{c'} B_t \left( \frac{m}{d} \right)^t \cdot \partial\left( a^t  \prod_{i=0}^{m-1}
     \left( ia +(d+m-i)b \right)\right)=
     \frac{1}{e_m} \sum_{t=0}^{c'} \left( \frac{m}{d} \right)^t A_t B_t ,
  \end{split}
\end{equation}
where we denoted by $A_t$ ($t=0,\dots,c'$) the divided differences
\[ A_t=\partial\left( a^t  \prod_{i=0}^{m-1}\left( ia +(d+m-i)b \right)\right). \]
The polynomials $A_t \in \mathbb{Q}[a,b;d]^{S_2}$ are symmetric in $a,b$ and have $\{a,b\}$-degree $m'+t$.
Note that $A_0=\left[\,\overline Y_m(d+m)\right]$.

Introducing coefficients $p_\mu \in \mathbb{Z}[d]$ ($\mu \vdash m'+t$) and $q_\nu \in \mathbb{Q}[d]$
($\nu \vdash c'-t$) of $A_t$ and $B_t$
in the Schur polynomial basis,
\begin{equation}\label{eq_pmuqnudef}
A_t=\sum_{\mu \vdash m'+t} p_\mu s_\mu \quad \text{ and } \quad
B_t=\sum_{\nu \vdash c'-t} q_\nu s_\nu, 
\end{equation}
we can continue \eqref{eq_Ylambda_texpansion} as
\begin{equation}\label{eq_Ylambda_tfullexpansion}
    \left[ \, \overline Y_\lambda(d+m) \right]=\frac{1}{e_m} \sum_{t=0}^{c'} \left( \frac{m}{d} \right)^t
    A_t B_t 
    =\frac{1}{e_m} \sum_{t=0}^{c'} \left( \frac{m}{d} \right)^t
    \sum_{\mu \vdash m'+t} \left( p_\mu s_\mu \right) \sum_{\nu \vdash c'-t} \left( q_\nu s_\nu \right).
 \end{equation}

Let us add here that the terms $(m/d)^t$ made us think that for the higher $t$'s the $d$-degrees of the
corresponding summands in \eqref{eq_Ylambda_tfullexpansion} might be lower.

Figure~\ref{tikz_AtBtoverview} is meant to depict \eqref{eq_Ylambda_tfullexpansion}:
for each $t=0,\dots,c'$ the left-hand side of its $t$-th row consists of partitions
$\mu \vdash m'+t$ representing the terms $p_\mu s_\mu$ of $A_t$ and the right-hand side of its $t$-th row
comprises partitions $\nu \vdash c'-t$ representing the terms $q_\nu s_\nu$ of $B_t$.
Line segments of Figure~\ref{tikz_AtBtoverview} we will explain later.

\begin{figure}
  \centering
 \pgfmathsetmacro{\m}{5}
\pgfmathsetmacro{\cprime}{8}

\begin{tikzpicture}[xscale=1.1]
  \pgfpointtransformed{\pgfpointxy{1}{1}};
  \pgfgetlastxy{\vx}{\vy}
  \begin{scope}[node distance=\vy and \vx] 
  \foreach \t in {0,...,\cprime}
  \pgfmathsetmacro{\z}{(\t+\m-1)/2}
  \foreach \x in  {0,...,\z}
  {
    \pgfmathparse{int(\t+\m-1-\x)}
    \let\theIntINeed\pgfmathresult
    \draw (\x,-\t) node [draw,rounded corners,inner sep=2pt] {$\theIntINeed,\x$};
  };

  \pgfmathsetmacro{\posseg}{(\m-1)/2}
  \foreach \pos in {0,...,\posseg}
  {
    \pgfmathsetmacro{\endx}{\m-1-\pos};
    \pgfmathsetmacro{\endy}{2*\pos-\m+1};
    \pgfmathsetmacro{\posseglabel}{int(\endx+1)};
    \draw[line cap=round,color=cyan,line width=3pt,draw opacity=0.5] (\pos-0.5,0+0.5) --
    node[above,at start,xshift=-8,yshift=-6] {$\posseglabel$} (\endx+0.5,\endy-0.5);
  }

  \pgfmathsetmacro{\negseg}{\m}
  \foreach \neg in {1,...,\negseg}
  {
    \pgfmathparse{\m-1-(\neg+max(\neg,\m-1-\neg))};
    \let\startnegy\pgfmathresult;
    \draw[line cap=round,color=red,line width=3pt,draw opacity=0.7] (\neg,\startnegy+.5) --
    node [above, at start] {$\neg$}
    (\neg,-\cprime-.5);
  }

  \pgfmathparse{int(floor((\m-1+\cprime)/2)+2)};
  \let\RHSx\pgfmathresult;

  \foreach \t in {0,...,\cprime}
  \pgfmathsetmacro{\z}{(\cprime-\t)/2}
  \foreach \Schtwo in  {0,...,\z}
  {
    \pgfmathparse{int(\cprime-\t-\Schtwo)};
    \let\Schone\pgfmathresult
    \draw (\RHSx+\Schtwo,-\t) node [draw,rounded corners,inner sep=2pt]  {$\Schone,\Schtwo$};
  };


  \pgfmathsetmacro{\dashedx}{int(floor((\m-1+\cprime)/2)+1)}
  \draw[dashed,thick] (\dashedx,.5) -- (\dashedx,-\cprime-.5);
  
\end{scope}
\end{tikzpicture}
 \caption{Overview of the products $A_t B_t$ ($t=0,\dots,c'$) for $m=5$ and $c'=8$}
  \label{tikz_AtBtoverview}
\end{figure}
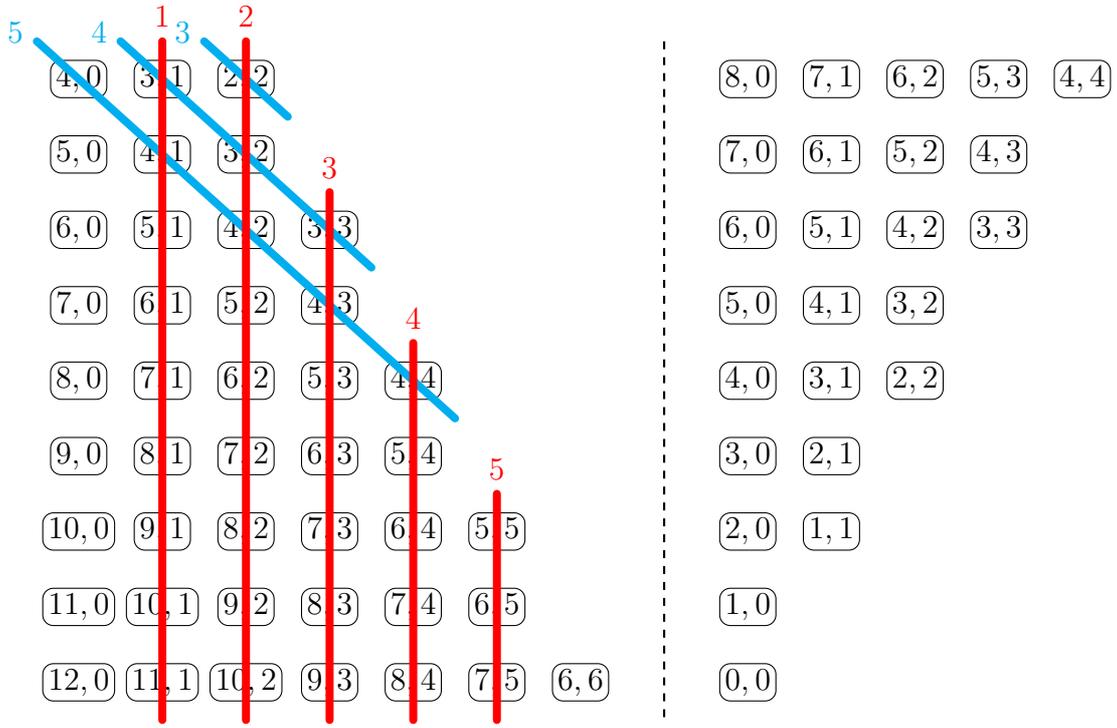

\medskip

Theorem \ref{thrm_leadingtermfromproduct}/\ref{item_thrmi} follows from 
\begin{multline}\label{eq_thrm1intermsoft}\tag{$*$}
  \deg\left( \text{coefficient of } s_\rho \text{ in }  A_0 B_0 \right) >
  \deg\left( \text{coefficient of } s_\rho \text{ in } \left( \frac{m}{d} \right)^{t} A_t B_t\right)
  \\
  \text{holds for every $\rho \vdash c=c'+m'$ and every $t=1,\dots,c'$.}
\end{multline}
The proof of (\ref{eq_thrm1intermsoft}) will take up the majority of what follows (and will end in part
\ref{item_puttingtogether}, see later).
Part \ref{item_thrmii} of Theorem \ref{thrm_leadingtermfromproduct} will result from a further analysis
of the $t=0$ summand of \eqref{eq_Ylambda_tfullexpansion}.
Finally, Theorem \ref{thrm_rrholeadterm} will be proved by choosing $m=\min(\lambda)$.

\medskip

The line segments of Figure \ref{tikz_AtBtoverview}
will be used to prove (\ref{eq_thrm1intermsoft}):
these segments will be defined such that they cover all the partitions with corresponding coefficients
$p_\mu \in \mathbb{Q}[d]$ 
nonzero and such that along them the behaviour of $\deg(p_\mu)$ 
and the sign of the leading coefficient of $p_\mu$ 
can be studied.

The line segments of the left-hand side
will also be used to compare the summands
\begin{equation}\label{eq_pmuterm}
  \left( \frac{m}{d} \right)^t p_\mu s_\mu B_t=:\sum_{\rho \vdash c} r_{\mu,\rho}(d) s_\rho
\end{equation}
($\mu \vdash m'+t$) of $(m/d)^t A_t B_t$. More precisely, 
given any partition $\rho \vdash c$ we will compare their coefficients $r_{\mu,\rho}(d)
\left( \in \mathbb{Q}[d] \text{ , see Proposition \ref{prop_rmurho_ispoly}} \right)$
along these vertical and diagonal line segments.
Based on these comparisons,
we will be able to compare for different $t$'s the $d$-degrees of 
\[ \text{the coefficients of } s_\rho \text{ in the terms } \left( \frac{m}{d} \right)^t A_t B_t =
\sum_{\mu \vdash m'+t} r_{\mu,\rho}(d), \]
which will eventually lead to the proof of (\ref{eq_thrm1intermsoft}).

\pagebreak

\noindent
To accomplish the plan outlined above, we proceed with the following steps.
\begin{enumerate}
  \item[\ref{item_degq}]
    We show that $\deg(q_\nu)$ depends only on $\pi_2(\nu)$
    and that its leading coefficient is always positive.
  \item[\ref{item_degsignp}]
    We define diagonal and vertical line segments of $\big\{ \mu \vdash m'+t | \, 0 \le t \le c' \big\}$, and
    show how the degree and the sign of the leading coefficient of $p_\mu(d)$ can be deduced from
    the line segment(s) $\mu$ is contained in.
  \item[\ref{item_degformu}]
    For every $\rho \vdash c$ we define functions
    \[ f_\rho: \left\{ \left. \mu  \right| \mu \vdash m'+t \right\} \to
	\left\{ \left. \nu  \right| \nu \vdash c'-t \right\} \cup \left\{ \infty \right\} \]
	($t=0,\dots,c'$) that will help us to determine degrees of the coefficients $r_{\mu,\rho}(d)$.
  \item[\ref{item_comparefrhoalongsegments}]
    For any given $\rho \vdash c$ we compare values $\pi_2 \left( f_\rho(\mu)\right)$ for adjacent partitions $\mu$ of
    diagonal and vertical line segments.
  \item[\ref{item_puttingtogether}]
    By connecting any $\mu \vdash m'+t$ ($t \ge 1$ and $p_\mu \neq 0$) to the $t=0$ row via line segments
    and making the above comparisons along the way, we prove \eqref{eq_thrm1intermsoft}.
  \item[\ref{item_thrm1ii}] We prove the \ref{item_thrmii} case of Theorem \ref{thrm_leadingtermfromproduct}.
\item[\ref{item_thrm2}] We conclude with a proof for Theorem \ref{thrm_rrholeadterm}.
\end{enumerate}

\medskip

\begin{enumerate}[label = {\textbf{(\Alph*)}},wide, labelindent=0pt]
  \item\label{item_degq}
    Let us start by investigating the coefficients $q_{\nu}(d)$ in the $B_t$'s.
    A simple substitution into Jacobi's bialternant formula shows that
\begin{equation}\label{eq_subs2Schur}
  s_{(k,l)}\big\rvert_{\stackon{$\hspace{.075em} \scriptstyle b \mapsto b+x$}{$\scriptstyle a \mapsto a+x$}}=
  \mathlarger{\sum}_{t=0}^{k+l} x^{c'-t} \mathlarger{\sum}_{(u,v)\vdash t}
  \left( \binom{k+1}{u+1} \binom{l}{v}-\binom{k+1}{v}\binom{l}{u+1} \right) s_{(u,v)}.
\end{equation}
Here, for all the $s_{(u,v)}$'s their coefficients are nonnegative 
and zero if $u>k$ or $v>l$. Hence, the coefficient of $s_{(u,v)}$ ($(u,v) \vdash c'-t$) in
\[ \left[ \, \overline Y_{\lambda'}(d) \right]
  \big\rvert_{\stackon{$\hspace{.075em} \scriptstyle b \mapsto b+x$}{$\scriptstyle a \mapsto a+x$}}=
\sum_{\nu \vdash c'} q_{\nu} s_{\nu}\big\rvert_{\stackon{$\hspace{.075em} \scriptstyle b \mapsto b+x$}{$\scriptstyle a \mapsto a+x$}}\]
is $x^t$ times a linear combination of elements in
$\left\{ \left. q_{(k,l)} \right| (k,l) \vdash c', u \le k \text{ and } v \le l \right\}$
with positive coefficients.

Using the positivity and the monotone decreasing nature of
$\left\{\deg (q_{\nu})\right\}_{\nu \vdash c'}$ as in 
Theorem \ref{thrm_rrholeadterm} part of the induction hypothesis for $\lambda'$,
we deduce that for every $t=0,\dots,c'$ and $(u,v) \vdash c'-t$
\begin{equation}\label{eq_degq}
  \deg \left( q_{(u,v)}\right) =\deg \left(  q_{(u+t,v)}\right) \text{ and the leading coefficient of } q_{(u,v)} \text{ is positive}.
\end{equation}
In other words, $\deg \left( q_\nu\right)$ depends only on $\pi_2(\nu)$. Therefore,
for any given $t=0,\dots,c'$ with respect to our ordering of partitions $\nu \vdash c'-t$
\begin{equation}\label{eq_degqmondecreasing}
  \begin{gathered}
    \deg \left( q_\nu \right) \text{ is monotone decreasing  and} \\
    \text{ it's difference for adjacent } \nu\text{'s is at most }  1.
\end{gathered}
\end{equation}

\medskip

\item\label{item_degsignp}
Expanding its definition, we can write $A_t$ as
\[ A_t=\partial\left( a^t  \prod_{i=0}^{m-1}\left( ia +(d+m-i)b \right)\right)=
\sum_{f=1}^{m} e_{f}(d) \partial\left( a^{t+m-f}b^{f} \right), \]
for the $e_f \in \mathbb{Z}[d]$ coefficients in $\prod_{i=0}^{m-1}(ia+(d+m-i)b)=\sum_{f=1}^m e_f(d) a^{m-f} b^f$.
In particular,
$\deg(e_f)=f$ and its leading coefficient is positive.

Using
\begin{equation*}
  \threecase{\partial\left( a^{m+t-f}b^{f}\right)=}
  {s_{(f-1,m+t-f)}}{2f>m+t,}
  {0}{2f=m+t,}
  {-s_{(m+t-1-f,f)}}{2f<m+t,}
\end{equation*}
we get that for any $f \in \{1,\dots,m\}$
$+e_f$ is a summand of the coefficient of $s_{(f-1,m-f+t)}$ in $A_t$ for $t<2f-m$ and 
$-e_f$ is a summand of the coefficient of $s_{(m+t-1-f,f)}$ in $A_t$ for $t>2f-m$, see Figure
\ref{tikz_Atsegments_why}. For example, in the $m=5$ and $t=1$ case we have $p_{(4,1)}=e_5-e_1$.
 \pgfmathsetmacro{\m}{5}
\pgfmathsetmacro{\cprime}{8}

\newcommand\nodecomparea{
     \ifthenelse{\lhs>\rhs}{\nodetextaa}{}
     \ifthenelse{\equal{\lhs}{\rhs}}{\nodetextba}{}
     \ifthenelse{\lhs<\rhs}{\nodetextca}{}
   }
   \newcommand\nodecompareb{
     \ifthenelse{\lhs>\rhs}{\nodetextab}{}
     \ifthenelse{\equal{\lhs}{\rhs}}{\nodetextbb}{}
     \ifthenelse{\lhs<\rhs}{\nodetextcb}{}
   }
   \newcommand\nodesign{
     \ifthenelse{\lhs>\rhs}{+}{}
     \ifthenelse{\equal{\lhs}{\rhs}}{}{}
     \ifthenelse{\lhs<\rhs}{-}{}
   }

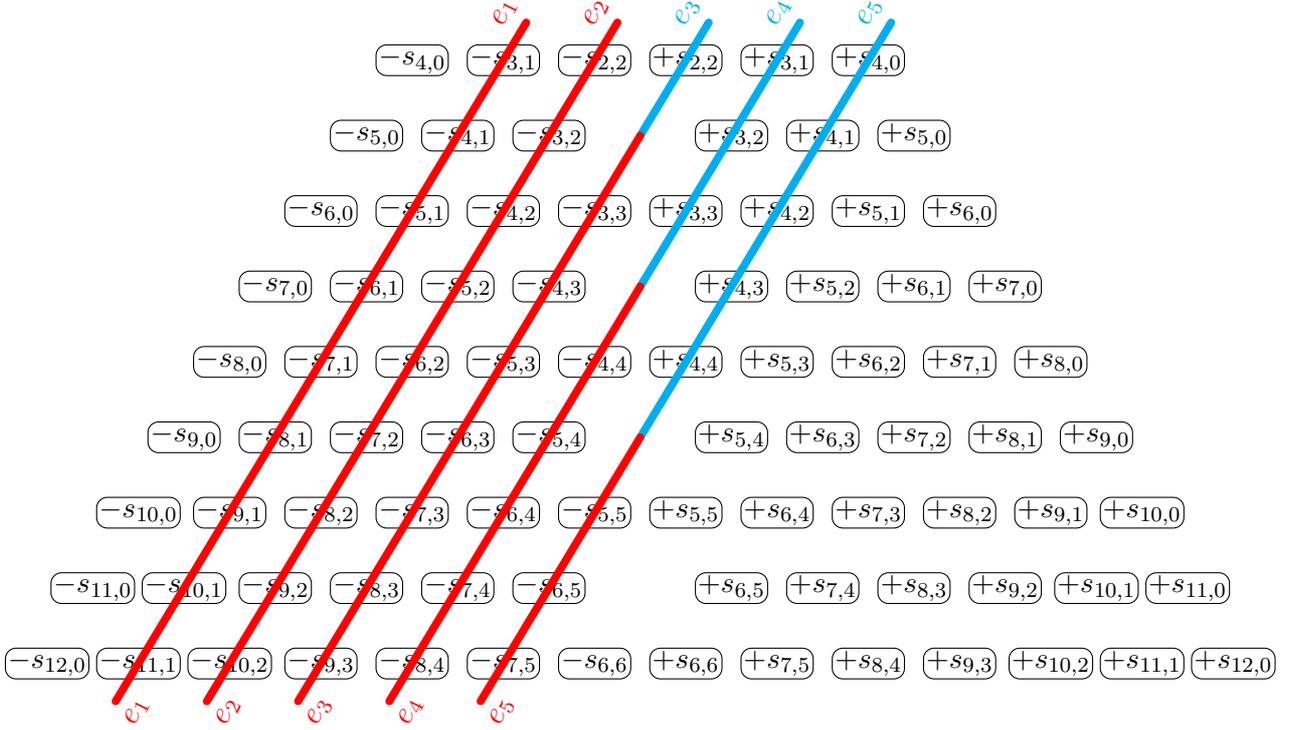
\begin{figure}
  \centering
  \begin{tikzpicture}[xscale=1.2]
    \pgfpointtransformed{\pgfpointxy{1}{1}};
    \pgfgetlastxy{\vx}{\vy}
    \begin{scope}[node distance=\vy and \vx] 
    \foreach \t in {0,...,\cprime}
    \pgfmathsetmacro{\ftill}{\m+\t}
    \foreach \f in  {0,...,\ftill}
    {
      \pgfmathsetmacro{\tcorrection}{(\m+\t)/2}
      \pgfmathsetmacro{\lhs}{int(2*\f)}
      \pgfmathsetmacro{\rhs}{int(\m+\t)}
      \pgfmathparse{int(\f-1)}
      \let\nodetextaa\pgfmathresult
      \pgfmathparse{int(\m+\t-\f)}
      \let\nodetextab\pgfmathresult
      \pgfmathparse{int(-10)}
      \let\nodetextba\pgfmathresult
      \pgfmathparse{int(-20)}
      \let\nodetextbb\pgfmathresult
      \pgfmathparse{int(\m+\t-1-\f)}
      \let\nodetextca\pgfmathresult
      \pgfmathparse{int(\f)}
      \let\nodetextcb\pgfmathresult
      \pgfmathparse{int(\t+\m-\f)}
      \let\aexponent\pgfmathresult
      \ifthenelse{\NOT \equal{\lhs}{\rhs}}{
	\draw (\f-\tcorrection,-\t) node [font=\small,draw,rounded corners,inner sep=1pt]
	{$\nodesign s_{\nodecomparea,\nodecompareb}$}
      }
      {};
    };
    
    \foreach \f in {1,...,\m}
    \pgfmathsetmacro{\lhs}{int(2*\f)}
    \pgfmathsetmacro{\rhs}{int(\m)}
    \ifthenelse{\lhs<\rhs \OR \lhs=\rhs}
    {
      \pgfmathsetmacro{\startx}{\f-\m/2}
      \pgfmathsetmacro{\starty}{0}
      \pgfmathsetmacro{\endx}{\f-(\m+\cprime)/2}
      \pgfmathsetmacro{\endy}{-\cprime}
      \draw[line width=3pt, line cap=round,color=red,draw opacity=0.7]  (\startx+.25,\starty+.5)
      to node [sloped, above, at start] {$e_{\f}$}
      node [sloped, below, at end] {$e_{\f}$}
      (\endx-.25,\endy-.5)
    }
    {
      \pgfmathsetmacro{\startx}{\f-\m/2}
      \pgfmathsetmacro{\starty}{0}
      \pgfmathsetmacro{\midx}{0}
      \pgfmathsetmacro{\midy}{-(2*\f-\m)}
      \pgfmathsetmacro{\endx}{\f-(\m+\cprime)/2}
      \pgfmathsetmacro{\endy}{-\cprime}
      \draw[line width=3pt, line cap=round,color=cyan,draw opacity=0.5]  (\startx+.25,\starty+.5)
      to node [sloped, above, at start] {$e_{\f}$} (\midx,\midy);
      \draw[line width=3pt, line cap=round,color=red,draw opacity=0.7]  (\midx,\midy)
      to node [sloped, below, at end] {$e_{\f}$} (\endx-.25,\endy-.5)
    };
  \end{scope}
  \end{tikzpicture}
  \caption{The $A_t$'s as linear combinations of the divided differences
  $\partial(a^{t+m-f}b^{f})=\pm s_\mu$ with nonzero coefficients indicated by the line segments $e_f$
in the $m=5$ and $c'=8$ case}
  \label{tikz_Atsegments_why}
\end{figure}

Accordingly, for each $(i,j) \vdash m'$ the set 
\[ \left\{ \left. (i,j+t) \vdash m'+t \right| 0 \le t \le i-j \right\}\]
  ---where the corresponding coefficients $p_{(i,j+t)}$ have a $+e_{i+1}$ summand---we
  will call the  \emph{$(i+1)$-diagonal line segment},
  and for each
  $j \in \left\{ 1,\dots,m \right\}$ the set
\[ \left\{ \left. (i,j)   \right| j \le i \le m'+c'-j \right\} \]
  ---where the corresponding coefficients $p_{(i,j)}$ have a $-e_{j}$ summand---we
  will call the \emph{$j$-vertical line segment}, see Figure \ref{tikz_AtBtoverview}.

\medskip

\item\label{item_degformu}
Products of Schur polynomials in two variables can be easily calculated using e.g. Pieri's formula.
For $\mu \vdash m'+t$ and $\nu \vdash c'-t$
\[ s_{\mu} s_{\nu}=\sum_{\rho \in I(\mu,\nu)} s_\rho, \]
where $I(\mu,\nu)=\left[ p(\mu,\nu),P(\mu,\nu) \right]$ is an interval of partitions $\rho \vdash c$ with
endpoints
\[ p\left( (i,j),(k,l) \right)=(i+k,j+l) \enskip \text{ and } \enskip P\left( (i,j),(k,l) \right)=
\left(\max(i+l,j+k),\min(i+l,j+k)\right).\]

Let us note here that for any $\mu \vdash m'+t$ and adjacent partitions $(k,l), (k-1,l+1) \vdash c'-t$
the fact that
both the starting points and the endpoints of $I(\mu,(k,l))$ and $I(\mu,(k-1,l+1))$ can differ by at
most one implies that
\begin{equation}\label{eq_unionalsointerval}
  \text{for any }\nu_1, \nu_2 \vdash c'-t \text{ the union } \bigcup_{\nu_1 \le \nu \le \nu_2} I(\mu,\nu) \text{ is also an interval.}
\end{equation}

Using the notation $I(\mu,\nu)$, we can express the coefficient $r_{\mu,\rho}$ $(\mu \vdash m'+t)$ defined
in \eqref{eq_pmuterm} as 
\begin{equation}\label{eq_rmurho_equals}
  r_{\mu,\rho}(d)=\left( \frac{m}{d} \right)^t \sum_{\substack{\nu \vdash c'-t \\ \rho \in I(\mu,\nu)}}
  p_{\mu}(d) q_{\nu}(d).
\end{equation}

The positivity of the leading coefficients of the $q_{\nu}$'s, see (\ref{eq_degq}), implies that if
$\rho \vdash c$ and $\mu \vdash m'+t$ are partitions such that $\rho \in \cup_{\nu \vdash c'-t} I(\mu,\nu)$,
then $r_{\mu,\rho} \neq 0$.
In Appendix \ref{sec_polynomiality_rmurho} we show that the a priori rational functions $r_{\mu,\rho}(d)$'s
are polynomials in $d$. Although not strictly necessary for our proof to work, this makes the interpretation
of their degrees unambiguous.
Our goal is to analyze this $\deg(r_{\mu,\rho})$. 

As, by \eqref{eq_degqmondecreasing}, $\deg (q_{\nu})$ is monotone decreasing in $\nu \vdash c'-t$ ,
we are interested in the smallest $\nu \vdash c'-t$ such that $\rho \in I(\mu,\nu)$.
Therefore, for every $\rho \vdash c$ and $\mu \vdash m'+t$ we define
\[  \twocase{f_\rho\left( \mu \right)=}
  {\min\left\{ \left. \nu \vdash c'-t \right| \rho\in I\left( \mu,\nu \right)   \right\}}
    {\rho \in \bigcup_{\nu \vdash c'-t} I\left( \mu,\nu \right),}
  {\infty}{\rho \notin \bigcup_{\nu \vdash c'-t} I\left( \mu,\nu \right).}\]
This function will be crucial in our proof as,
again by the positivity of the leading coefficients of the $q_\nu$'s,
\[ \deg\left( r_{\mu,\rho} \right)=
\deg\left( \text{the coefficient of } s_\rho \text{ in } \left( \frac{m}{d} \right)^t p_\mu s_\mu q_{f_{\rho}(\mu)} s_{f_{\rho}(\mu)} \right), \]
where in case $\rho \notin \bigcup_{\nu \vdash c'-t} I\left( \mu,\nu \right)$ and $f_\rho(\mu)=\infty$,
we set $q_\infty =s_\infty =0$.
In other words,
\begin{equation}\label{eq_degformu}
     \deg\left( r_{\mu,\rho} \right)=
     \deg \left( p_\mu \right)+
     \deg\left( q_{f_\rho(\mu)} \right) - t.
   \end{equation}
Combining this with \eqref{eq_degqmondecreasing}, we get that
if $\mu_i \vdash m+t_i$ and $\rho \vdash c$ are partitions such that
$\pi_2(f_{\rho}(\mu_1)) \le \pi_2(f_{\rho}(\mu_2))+f$ for some $f \in \mathbb{N}_0$, then
$\deg(q_{f_\rho(\mu_1)})+f \ge \deg(q_{f_\rho(\mu_2)})$, hence
\begin{multline}\label{eq_degsrhocoeff}
  \pi_2(f_{\rho}(\mu_1)) \le \pi_2(f_{\rho}(\mu_2))+f  \implies
  \\[5pt]
  \deg(r_{\mu_1,\rho}) \ge
  \deg(r_{\mu_2,\rho})+\deg(p_{\mu_1})-\deg(p_{\mu_2})+t_2-t_1-f.
\end{multline}
If we set $\pi_2(\infty)=\infty$,
and define the degree of the constant 0 polynomial to be $-\infty$,
\eqref{eq_degformu} and \eqref{eq_degsrhocoeff} remains valid even when
$\rho \notin \bigcup_{\nu \vdash c'-t_{(2)}} I\left( \mu_{(2)},\nu \right)$,
allowing a uniform treatment of all the cases.

\medskip

\item\label{item_comparefrhoalongsegments}
The goal of this part is to compare the values $\pi_2(f_\rho(\mu))$ along
diagonal (\ref{item_mon4diagonal}) and vertical (\ref{item_mon4horizontal}) line segments:

\smallskip

\begin{enumerate}[label = (D/\Roman*),wide, labelindent=0pt]
  \item\label{item_mon4diagonal}
Let $\mu_1=(i,j) \vdash m'+t$ and $\mu_2=(i,j+1) \vdash m'+(t+1)$ be adjacent partitions of a diagonal segment.
Then for every $(k,l) \vdash c'-(t+1)$
\[ p\left( (i,j),(k+1,l) \right)<p\left( (i,j+1),(k,l) \right) \text{ and }
P\left( (i,j),(k+1,l) \right)=P\left( (i,j+1),(k,l) \right), \]
hence
\begin{equation}\label{eq_intervalcontained4diag}
I\left( (i,j),(k+1,l) \right) \supset I\left( (i,j+1),(k,l) \right),
\end{equation}
which in turn---as illustrated by Figure \ref{tikz_diagonalsegmentintervals} with an example---implies that for
every $\rho \vdash c$
\begin{equation}\label{eq_pi2frhocomparison_diagonal}
\pi_2(f_\rho(\mu_1)) \le \pi_2(f_\rho(\mu_2)).
\end{equation}

\pgfmathsetmacro{\m}{5}
\pgfmathsetmacro{\cprime}{8}
\pgfmathsetmacro{\c}{\m-1+\cprime}
\pgfmathsetmacro{\rhotill}{\c/2}
    \pgfmathsetmacro{\rhoplace}{\rhotill/2}
\pgfmathsetmacro{\mua}{4}
\pgfmathsetmacro{\mub}{2}
\pgfmathsetmacro{\t}{2}

\begin{figure}
  \centering
  \begin{tikzpicture}[xscale=0.9, every node/.style={font=\footnotesize}]
  \pgfpointtransformed{\pgfpointxy{1}{1}};
  \pgfgetlastxy{\vx}{\vy}
  \begin{scope}[node distance=\vy and \vx] 

    \pgfmathsetmacro{\seplinex}{-0.55}
    \pgfmathsetmacro{\nubtill}{floor((\cprime-\t)/2)}
    \draw (-1.2,.5) node [name=mu] {$(k\!\!+\!\!1,l)$};
    \draw (\rhoplace,.5) node [name=rho] {$\rho$};

    \draw[thin] (\seplinex,.7) -- (\seplinex,-\nubtill-0.25);
    \draw[thin] (-1.8,0.3) -- (\rhotill+.5,0.3);

    \foreach \nub in {0,...,\nubtill}
    {
    \pgfmathsetmacro{\nua}{\cprime-\t-\nub}
    \pgfmathsetmacro{\nuadescr}{int(\nua)}
    \draw (-1,-\nub) node {$(\nuadescr,\nub)$};

    \foreach \rhob in {0,...,\rhotill}
    {
      \pgfmathsetmacro{\rhoa}{int(\c-\rhob)}
      \pgfmathsetmacro{\rhobdescr}{int(\rhob)}
      \draw (\rhob,-\nub) node [draw,rounded corners,inner sep=2pt] {$\rhoa,\rhobdescr$};
    }
    \pgfmathsetmacro{\rhobstart}{\mub+\nub}
    \pgfmathparse{min(\mua+\nub,\mub+\nua)}
    \let\rhobend\pgfmathresult
    \foreach \rhob in {\rhobstart,...,\rhobend}
      \pgfmathsetmacro{\rhoa}{int(\c-\rhob)}
      \pgfmathsetmacro{\rhobdescr}{int(\rhob)}
      \draw (\rhob,-\nub) node [draw,rounded corners,fill=black!20,inner sep=2pt] {$\rhoa,\rhobdescr$};
  }

    \pgfmathsetmacro{\dashedx}{floor(\c/2)+1}
    \draw[dashed,thick] (\dashedx,.5) -- (\dashedx,-\nubtill-.5);

    \pgfmathsetmacro{\nuadjbtill}{floor((\cprime-\t-1)/2)}
    \pgfmathsetmacro{\rhsxshift}{floor(\c/2)+3}
    \draw (\rhsxshift-1,.5) node {$(k,l)$};
    \draw (\rhsxshift+\rhoplace,.5) node  {$\rho$};
    \draw[thin] (\rhsxshift+\seplinex,.7) -- (\rhsxshift+\seplinex,-\nuadjbtill-0.25);
    \draw[thin] (\rhsxshift-1.5,0.3) -- (\rhsxshift+\rhotill+.5,0.3);

    \pgfmathsetmacro{\muadja}{\mua}
    \pgfmathsetmacro{\muadjb}{\mub+1}

    \pgfmathsetmacro{\nubtill}{(\cprime-\t-1)/2}
    \foreach \nub in {0,...,\nubtill}
    {
    \pgfmathsetmacro{\nua}{\cprime-\t-1-\nub}
    \pgfmathsetmacro{\nuadescr}{int(\nua)}
    \draw (\rhsxshift-1,-\nub) node {$(\nuadescr,\nub)$};
    \foreach \rhob in {0,...,\rhotill}
    {
      \pgfmathsetmacro{\rhoa}{int(\c-\rhob)}
      \pgfmathsetmacro{\rhobdescr}{int(\rhob)}
      \draw (\rhob+\rhsxshift,-\nub) node [draw,rounded corners,inner sep=2pt] {$\rhoa,\rhobdescr$};
    }
    \pgfmathsetmacro{\rhobstart}{\muadjb+\nub}
    \pgfmathparse{min(\muadja+\nub,\muadjb+\nua)}
    \let\rhobend\pgfmathresult
    \foreach \rhob in {\rhobstart,...,\rhobend}
    {
    \pgfmathsetmacro{\rhoa}{int(\c-\rhob)}
    \pgfmathsetmacro{\rhobdescr}{int(\rhob)}
    \draw (\rhob+\rhsxshift,-\nub) node [draw,rounded corners,fill=black!20,inner sep=2pt] {$\rhoa,\rhobdescr$};
  }
  }

  \end{scope}
  \end{tikzpicture}
  \caption{Comparison of intervals $I\left( \mu_1,(k+1,l) \right)$ and
    $I\left( \mu_2,(k,l) \right)$---denoted by grey background---for
    adjacent partitions $\mu_1=(4,2) \vdash m'+2$ and $\mu_2=(4,3) \vdash m'+3$ of the 5-diagonal line
    segment in the $m=5$, $c'=8$ case}
  \label{tikz_diagonalsegmentintervals}
\end{figure}

\smallskip

\item\label{item_mon4horizontal}
Let $\mu_1=(i,j) \vdash m'+t$ and $\mu_2=(i+1,j) \vdash m'+(t+1)$ be adjacent partitions of a
vertical segment.
By inspecting intervals $I(\mu_1,\nu)$, $\nu \vdash h:=c'-t$ and $I\left( \mu_2,\nu \right)$,
$\nu \vdash c'-(t+1)=h-1$,
we will show that
\begin{equation}\label{eq_pi2frhocomparison_vertical}
  \pi_2\left( f_\rho(\mu_1) \right) \le \pi_2\left( f_\rho(\mu_2) \right)+1
\end{equation}
holds for every $\rho \vdash c$.

As the starting points of the corresponding intervals are equal,
\begin{equation*}\label{eq_horizontal_startingpt}
p\left( \mu_1,(k+1,l) \right)=(i+k+1,j+l)=p\left( \mu_2,(k,l) \right),
\end{equation*}
we can focus on their endpoints, or equivalently, their $\pi_2$-projections which we
will denote by
\[ g_1(l):=\pi_2\left( P\left( \mu_1, (h-l,l) \right) \right)=\min(i+l,j+h-l), \quad 0 \le l \le
\left\lfloor\frac{h}{2}\right\rfloor \]
and
\[ g_2(l):=\pi_2\left( P\left( \mu_2, (h-1-l,l) \right) \right)=\min(i+1+l,j+h-1-l), \quad 0 \le l \le
\left\lfloor\frac{h-1}{2}\right\rfloor. \]
Then $g_1(l+1)=g_2(l)$ for every $0 \le l <  \floor{h/2}$.

The above coincidence of starting points and endpoints,
together with \eqref{eq_unionalsointerval},
tells us that for every $0 \le l <  \floor{h/2}$
\[ \bigcup_{\nu_1 \le (k,l+1)} I\left( \mu_1,\nu_1 \right) \supset
\bigcup_{\nu_2 \le (k,l)} I\left( \mu_2,\nu_2 \right). \]
As a consequence,
\begin{equation}\label{eq_horizontal_unioninterval}
\pi_2(f_{\rho}(\mu_1)) \le \pi_2(f_{\rho}(\mu_2))+1 \text{ for every }
\rho \in \bigcup_{l < \floor{h/2}} I\left( \mu_2,(k,l) \right).
\end{equation}

What is left to prove (\ref{eq_pi2frhocomparison_vertical}) is that the union in \eqref{eq_horizontal_unioninterval}
contains all the $\rho$'s with $f_{\rho}(\mu_2) \neq \infty$.
In other words, that
 \begin{equation}\label{eq_horizontal_lastinterval}
   \bigcup_{ l \le \floor{(h-1)/2}} I\left( \mu_2, (k,l) \right)=
  \bigcup_{l < \floor{h/2} } I\left( \mu_2, (k,l) \right)
\end{equation}
(even when $\lfloor (h-1)/2 \rfloor=\lfloor h/2 \rfloor$, and there is an extra interval on the left-hand
side).

To accomplish this, we introduce 
\[ x_1:=x(\mu_1):=\frac{h+j-i}{2} \text{ and }  x_2:=x(\mu_2):=\frac{h-1+j-(i-1)}{2}=x_1-1, \]
elements where the $i+l$, $j+h-l$ arguments of $g_1(l)$ and the $i+1+l$, $j+h-1-l$ arguments of $g_2$
intersect respectively.
At the points $\floor{x_i}$ and $\ceil{x_i}$ (if nonnegative) $g_i$ takes its highest possible value, $\floor{h/2}$.
For $l \ge \floor{x_i}$ $g_i$ is monotone decreasing,
see Figure \ref{tikz_endpoints}.
Therefore
\begin{equation}\label{eq_xestimate}
  \floor{x_2}<\floor{x_1} \le \left\lfloor{\frac{h}{2}}\right\rfloor
\end{equation}
shows that
even if $\floor{h/2}=\floor{(h-1)/2}$ and there is an extra interval, 
its endpoint is smaller:
\[ g_2\left(\left\lfloor\frac{h-1}{2}\right\rfloor\right) \le
g_2\left(\left\lfloor\frac{h-1}{2}\right\rfloor-1\right),\]
therefore (\ref{eq_horizontal_lastinterval}) holds.
\pgfmathsetmacro{\m}{5}
\pgfmathsetmacro{\cprime}{8}
\pgfmathsetmacro{\c}{\m-1+\cprime}
\pgfmathsetmacro{\rhobtill}{\c/2}

\pgfmathsetmacro{\t}{1}
\pgfmathsetmacro{\h}{\cprime-\t}
\pgfmathsetmacro{\nubtill}{floor(\h/2)}
\pgfmathsetmacro{\nuadjbtill}{floor((\h-1)/2)}

\pgfmathsetmacro{\mua}{3}
\pgfmathsetmacro{\mub}{\m-1+\t-\mua}
\pgfmathsetmacro{\muadja}{\mua+1}
\pgfmathsetmacro{\muadjb}{\mub}

\pgfmathsetmacro{\gstartx}{min(\mua,\mub+\nubtill)}
\pgfmathsetmacro{\gmidy}{(\h+\mub-\mua)/2}
\pgfmathsetmacro{\gmidx}{\mua+\gmidy} 
\pgfmathsetmacro{\gendx}{min(\mua+\nubtill,\mub+\h-\nubtill)}

\pgfmathsetmacro{\gadjstartx}{min(\muadja,\muadjb+\nuadjbtill)}
\pgfmathsetmacro{\gadjmidy}{(\h-1+\muadjb-\muadja)/2}
\pgfmathsetmacro{\gadjmidx}{\muadja+\gadjmidy} 
\pgfmathsetmacro{\gadjendx}{min(\muadja+\nuadjbtill,\muadjb+\h-1-\nuadjbtill)}

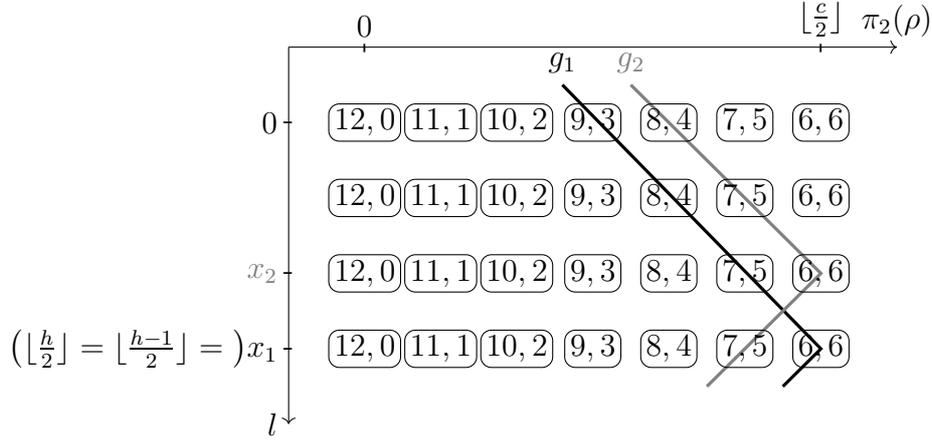
\begin{figure}
  \centering
  \begin{tikzpicture}
    \draw [very thick] (\gstartx-.4,0+.5) node [above,sloped] {$g_1$} -- (\gmidx,-\gmidy) -- (\gendx-.5,-\nubtill-.5);
    \draw [very thick,black!50] (\gadjstartx-.5,0+.5) node [above,sloped] {$g_2$} -- (\gadjmidx,-\gadjmidy) -- (\gadjendx-.5,-\nuadjbtill-.5);

    \draw[->] (-1,1) -- (-1,0) node [left] {$0$} node  {-}
    -- (-1,-\gadjmidy) node [left,black!50] {$x_2$} node  {-}
    -- (-1,-\gmidy) node [left] {$\big( \lfloor \frac{h}{2} \rfloor = \lfloor \frac{h-1}{2} \rfloor  =\big) x_1$} node  {-}
    -- (-1,-\nubtill-1) node [left] {$l$}; 

    \draw[->] (-1,1) --
    (0,1) node [rotate=90] {-} node [above] {$0$} --
    (\rhobtill,1) node [rotate=90] {-} node [above] {$\lfloor \frac{c}{2} \rfloor$} --
    (\rhobtill+1,1) node [above] {$\pi_2(\rho)$};

    \foreach \nub in {0,...,\nubtill}
    {
      \foreach \rhob in {0,...,\rhobtill}
      \pgfmathsetmacro{\rhoa}{int(\c-\rhob)}
      \pgfmathsetmacro{\rhobdescr}{int(\rhob)}
      \draw (\rhob,-\nub) node [draw,rounded corners,inner sep=2pt] {$\rhoa,\rhobdescr$};
    }
  \end{tikzpicture}
  \caption{Comparison of endpoints of intervals, $g_1(l)=P\left( \mu_1,(k+1,l) \right)$ and $g_2(l)=P\left( \mu_2,(k,l)
    \right)$ for  adjacent partitions $\mu_1=(3,2) \vdash m'+1$ and $\mu_2=(4,2) \vdash m'+2$ of the 2-vertical
  line segment in the $m=5$, $c'=8$ case}
  \label{tikz_endpoints}
\end{figure}

\end{enumerate}

\medskip

\item\label{item_puttingtogether}
We will finish proving (\ref{eq_thrm1intermsoft}) by showing---using induction on $t \ge 1$---that for every $\rho \vdash c$
and $\mu \vdash m'+t$
\begin{equation}\label{eq_degcomparedtot0n}
  \deg(r_{\mu,\rho}) < \deg\left( \text{coefficients of } s_\rho \text{ in }  A_0 B_0 \right).
\end{equation}

The induction step is outlined as follows.
For any $\mu \vdash m'+t$ ($t \ge 1$) with $p_{\mu} \neq 0$ there exists a diagonal (or a vertical) line segment
containing $\mu$, along which---except possibly for $\mu=\left(m,m \right)$, see later---$\mu$ is adjacent
to some $\mu_0 \vdash m'+(t-1)$ (Figure \ref{tikz_AtBtoverview}).
Either $\mu_0 \vdash m'$ or, by the induction hypothesis, \eqref{eq_degcomparedtot0n} holds for $\mu_0$.
For such adjacent partitions we proved, see \eqref{eq_pi2frhocomparison_diagonal} (or
\eqref{eq_pi2frhocomparison_vertical}) that $\pi_2(f_\rho(\mu_0)) \leq \pi_2(f_\rho(\mu))$
(or $\pi_2(f_\rho(\mu_0)) \leq \pi_2(f_\rho(\mu))+1$).
Therefore, we can use \eqref{eq_degsrhocoeff} to show that
\begin{equation*}
  \deg(r_{\mu,\rho}) <  \deg(r_{\mu_0,\rho}) \quad
  \Big( \text{or } \deg(r_{\mu,\rho}) \le   \deg(r_{\mu_0,\rho}) \Big).
\end{equation*}	

To clear up the ambiguity about the two types of line segments, let us first note that
if $\mu \vdash m'+t$ is contained in both an $f$-diagonal and a $g$-vertical line segment, that is
$p_\mu=e_f-e_g$, then $f > g$, hence $\deg(p_\mu)=f$.
This means that
if, in this case, we want to estimate $\deg(r_{\mu,\rho})$
via \eqref{eq_degsrhocoeff}, we have to use the diagonal line segment.

If the line segment can be chosen diagonal---as is the case for all $\mu \vdash m'+1$---,
then \eqref{eq_pi2frhocomparison_diagonal} combined with 
\eqref{eq_degsrhocoeff} becomes
\[
  \deg\left( r_{\mu,\rho}\right) <
  \deg\left( r_{\mu_0,\rho}\right).
\]

Moreover, if $t=1$, any possible $p_{\mu_0}$ ($\mu_0 \vdash m'$) has positive leading coefficient,
which, together with the positivity of the leading coefficients of the $q_\nu$'s ($\nu \vdash c'$) means that
high $d$-degree terms of the $r_{{\mu_0},\rho}$'s cannot cancel each other out.
This shows that for every $\mu_0 \vdash m'$
\[ \deg\left( r_{\mu_0,\rho}\right) \le 
  \deg\left( \text{coefficient of } s_\rho \text{ in }
 A_{0} B_{0}\right), \]
hence that \eqref{eq_degcomparedtot0n} holds for every $\mu \vdash m'+t$ in the $t=1$ base case.

If the line segment for $\mu \vdash m'+t$ can only be chosen to be a $g$-vertical line segment,
then $\deg(p_\mu)=g$ and $\deg(p_{\mu_0})\ge g$, hence \eqref{eq_pi2frhocomparison_vertical} combined with 
\eqref{eq_degsrhocoeff} becomes
\[
  \deg\left( r_{\mu,\rho}\right) \le   \deg\left( r_{\mu_0,\rho}\right).
\]
By the induction hypothesis for $\mu_0 \vdash m'+(t-1)$, this proves \eqref{eq_degcomparedtot0n}.  

Only the partition $\mu=(m,m) \vdash m'+m+1$ can have $p_\mu \neq 0$
while having no neighbour $\mu_0 \vdash m'+(t-1)$
along any line segment. It appears only if $c' \geq m+1$.
We compare $\deg(r_{\mu,\rho})$ to $\deg(r_{\mu_0,\rho})$ for $\mu_0=(m,m-1) \vdash m'+m$
the same way as if there was a diagonal line segment connecting them:
Analogously to \ref{item_mon4diagonal}, $\pi_2(f_\rho(\mu_0)) \le \pi_2(f_\rho(\mu))$,
$\deg(p_\mu)=\deg(p_{\mu_0})+1$, therefore by \eqref{eq_degsrhocoeff}
\[ \deg(r_{\mu,\rho}) \le \deg(r_{\mu_0,\rho}). \]
As $m+1 \ge 3$, we can apply the induction hypothesis to $\mu_0$, and get \eqref{eq_degcomparedtot0n} for $\mu$ as a result.

\medskip

\item\label{item_thrm1ii}
We will prove Theorem \ref{thrm_leadingtermfromproduct}/\ref{item_thrmii} by
showing that if $m-2 \le c'$, then for every $\rho \vdash c$
$\deg(r_{\mu,\rho})$ is strictly monotone decreasing in $\mu \vdash m'$, therefore
it attains its greatest value for the smallest partition, $\mu_{\min}=(m',0)$.

First we show that for every $\rho \vdash c$ the coefficient $r_{\mu_{\min},\rho} \neq 0$ , in other words, that
\begin{equation}\label{eq_mprime0intervals}
  \bigcup_{\nu \vdash c'} I\left( \mu_{\min},\nu \right)=\left\{\left. \rho \right| \rho \vdash c\right\}.
\end{equation}
The hypothesis  $m-2 \le c'$ is equivalent to
\[x\left( \mu_{\min} \right)=\frac{c'-m'}{2} > -1, \]
see \ref{item_mon4horizontal} for the definition and properties of $x(\mu)$.
This ensures that the set of endpoints
$\left\{ \left. P\left( \mu_{\min},\nu \right) \right| \nu \vdash c' \right\}$ contains
  the maximum, $(\ceil{c/2},\floor{c/2})$. As $p\left( \mu_{\min},(c',0) \right)=(c,0)$, (\ref{eq_mprime0intervals})
holds.

The monotonicity will follow from an analysis similar to that in \ref{item_mon4horizontal}.
More precisely, we will prove that for any $\rho \vdash c$ and adjacent partitions
$\mu_1=(i,j), \mu_2=(i-1,j+1) \vdash c'$
\begin{equation}\label{eq_verticaltzero}
  \pi_2\left( f_\rho(\mu_1) \right)  \le \pi_2\left( f_\rho(\mu_2) \right).
\end{equation}
As, by \eqref{eq_YmSchurcoeffleadingterm}, $\deg(p_{\mu_1})=m-j$ and $\deg(p_{\mu_2})=m-j-1$, the inequality (\ref{eq_degsrhocoeff})
then becomes
\begin{equation*}
  \deg\left( r_{\mu_1,\rho} \right) \ge \deg\left( r_{\mu_2,\rho} \right) +1,
\end{equation*}
showing the strictly monotone decreasing property.

In the comparison of intervals $I\left( \mu_1,(c'-l,l) \right)$ and $I\left( \mu_2,(c'-l,l) \right)$, for their
starting points we have
\[ p\left( \mu_1,(c'-l,l) \right) < p\left( \mu_2,(c'-l,l) \right). \]
To investigate their endpoints, we again use
\[ g_1(l):=\pi_2\left( P\left( \mu_1, (c'-l,l) \right) \right) \enskip \text{ and } \enskip
  g_2(l):=\pi_2\left( P\left( \mu_2, (c'-l,l) \right) \right),
\quad 0 \le l \le \left\lfloor\frac{c'}{2}\right\rfloor, \]
for which $g_1(l)=g_2(l+1)$, $ 0 \le l < \floor{c'/2} $, and
\[ x_1:=x(\mu_1)=\frac{c'+j-i}{2} \text{ and }  x_2:=x(\mu_2)=\frac{c'+j+1-(i-1)}{2}, \]
for which $x_1=x_2-1<\floor{c'/2}$.
These imply that
for every $\nu \vdash c'$
\[ \bigcup_{\nu_1 \le \nu} I\left( \mu_1,\nu_1 \right) \supsetneqq
\bigcup_{\nu_2 \le \nu} I\left( \mu_2,\nu_2 \right), \]
therefore (\ref{eq_verticaltzero}) holds.

\medskip

\item\label{item_thrm2}
To prove Theorem \ref{thrm_rrholeadterm}, let us choose $m=\min{\lambda}$.
That is $\lambda=(m^{e_m},\dots,r^{e_r})$.
Then $m-1=m' \le c'$, so we
can use Theorem \ref{thrm_leadingtermfromproduct}/\ref{item_thrmii}, and get that
for every $\rho \vdash c$
the leading term of $r_\rho$ comes from $r_{\mu_{\min},\rho}$, where, again,
we use the notation $\mu_{\min}=(m',0)$.
In particular, this, combined with \eqref{eq_degformu}, gives that
\begin{equation}\label{eq_degrrho}
  \deg(r_\rho)= \deg(p_{\mu_{\min}})+\deg(q_{f_\rho(\mu_{\min})})=m+\deg(q_{f_\rho(\mu_{\min})}).
\end{equation}

To describe $\deg(q_{f_\rho(\mu_{\min})})$, we first look into the function $\rho=(c-v,v) \mapsto
f_{(c-v,v)}(\mu_{\min})$, or equivalently, it's $\pi_2$-projection. In this $m-1 \le c'$ case this can
be easily computed from the intervals $\left\{I(\mu_{\min},\nu)\right\}_{\nu \vdash c'}$ to be
\begin{equation*}\label{eq_frhofunction}
  \pi_2(f_{(c-v,v)}(\mu_{\min}))=\max(0,v-m').
\end{equation*}

The coefficients $q_{f_\rho(\mu_{\min})}$ are coefficients of
$\left[ \, \overline Y_{\lambda'}(d) \right]$, hence, by the induction hypothesis for $\lambda'$,
there is a threshold $\vartheta(\lambda')$ for their behaviour.
By the monotonocity of $\rho \mapsto f_{\rho}(\mu_{\min})$, this means that for
\begin{equation}\label{eq_thetalambda_def}
 \vartheta(\lambda):=
  \max\left\{ \left. \rho \vdash c \, \right| f_{\rho}(\mu_{\min}) \le \vartheta(\lambda') \right\} 
  \end{equation}
we have $\rho \le \vartheta(\lambda)$ if and only if $f_\rho(\mu_{\min}) \le \vartheta(\lambda')$, see
Figure \ref{tikz_degqfrho_composition}.
\begin{figure}[H]
  \centering
  \pgfmathsetmacro{\addlambda}{16}
\pgfmathsetmacro{\c}{13}
\pgfmathsetmacro{\lambdamax}{10}
\pgfmathsetmacro{\lambdamin}{3}
\pgfmathsetmacro{\c}{14}
\pgfmathsetmacro{\cprime}{\c-(\lambdamin-1)}
\pgfmathsetmacro{\rhobtill}{floor(\c/2)}
\pgfmathsetmacro{\nubtill}{floor(\cprime/2)}
\pgfmathsetmacro{\thetalambdaprime}{\cprime-(\lambdamax)+1}

\pgfmathsetmacro{\addlambdaprime}{13}
\pgfmathsetmacro{\lambdaprimemax}{10}

\pgfmathsetmacro{\xcutto}{1}
\pgfmathsetmacro{\xcutdiff}{8}
\begin{tikzpicture}

\draw[->]
(0,0)  --
(0,\thetalambdaprime) node [left] {$\vartheta(\lambda')$} --
(0,\nubtill+1) node [left] {$\nu$} ;

\draw[->]
(0,0) node [below] {$0$} --
(\rhobtill,0) node [rotate=90] {-} node [below] {$\lfloor \frac{c}{2} \rfloor$} --
(\lambdamin-1,0) node [rotate=90] {-} node [below] {$m-1$} --
(\lambdamin-1+\thetalambdaprime,0) node [below] {$\vartheta(\lambda)$} --
(\rhobtill+1,0) node [below] {$\pi_2(\rho)$};
\foreach \rhob in {0,...,\rhobtill}
{
\draw (\rhob,0) node [rotate=90] {-} ;
}

\foreach \rhob in {0,...,\rhobtill}
{
  \pgfmathsetmacro{\frhomub}{max(0,-(\lambdamin-1)+\rhob)}
\draw (\rhob,\frhomub) node [circle,fill,inner sep=2pt] {};
}

\draw (\rhobtill,\rhobtill-\lambdamin+1+.5) node [above,rotate=45] {$f_{\rho}(\mu_{\min})$};

\draw[dashed] (0,\thetalambdaprime) -- (\lambdamin-1+\thetalambdaprime,\thetalambdaprime);
\draw[dashed] (\lambdamin-1+\thetalambdaprime,0) -- (\lambdamin-1+\thetalambdaprime,\thetalambdaprime);

\begin{scope}[shift={(\rhobtill+2,0)}]
\draw[->]
(0,0)  --
(0,\thetalambdaprime) node [left] {$\vartheta(\lambda')$} --
(0,\nubtill+1) node [left] {$\nu$} ;
\foreach \nub in {0,...,\nubtill}
{
\draw (0,\nub) node{-} ;
}

\draw[loosely dotted,thick] (0,0) -- (\xcutto,0);
\draw[->]
(\xcutto,0)  --
 (\addlambdaprime-\xcutdiff,0) node [below] {$|\lambda'|$} node [rotate=90] {-} --
(\addlambdaprime-\xcutdiff+1,0) ;
\pgfmathsetmacro{\dashstart}{\xcutto+1}
\pgfmathsetmacro{\dashtill}{\addlambdaprime-\xcutdiff}
\foreach \nub in {\dashstart,...,\dashtill}
{
  \draw (\nub,0) node [rotate=90] {-};
}

\foreach \nub in {0,...,\nubtill}
{
\pgfmathsetmacro{\degqnu}{min(\addlambdaprime,\addlambdaprime+\thetalambdaprime-\nub)}
\draw (\degqnu-\xcutdiff,\nub) node [circle,fill,inner sep=2pt] {};
}

\draw[dashed] (0,\thetalambdaprime) -- (\addlambdaprime-\xcutdiff,\thetalambdaprime);

\pgfmathsetmacro{\degqmulabel}{min(\addlambdaprime,\addlambdaprime+\thetalambdaprime-\nubtill)}
\draw (\degqmulabel-\xcutdiff+.5,\nubtill) node [above,rotate=-45] {$\deg(q_\nu)$};
\end{scope}

\pgfmathsetmacro{\rhobspecific}{4}
\pgfmathsetmacro{\frhomubspecific}{max(0,-(\lambdamin-1)+\rhobspecific)}
\pgfmathsetmacro{\nubspecific}{\frhomubspecific}
\pgfmathsetmacro{\degqnuspecific}{min(\addlambdaprime,\addlambdaprime+\thetalambdaprime-\nubspecific)}
\draw[magenta] (\rhobspecific,0) -- node [rotate=90] {\midarrow} (\rhobspecific,\frhomubspecific);
\draw[magenta,dashed] (\rhobspecific,\frhomubspecific) -- node (\rhobtill,\frhomubspecific) [above] {$\deg(f_{\rho}(\mu_{\min}))$}
(\rhobtill+2,\frhomubspecific);
\draw[magenta] (\rhobtill+2,\frhomubspecific) -- node {\midarrow} (\degqnuspecific-\xcutdiff+\rhobtill+2,\frhomubspecific);

\end{tikzpicture}
  \caption{Functions $\rho \mapsto f_\rho(\mu_{\min})$ and
    $\nu \mapsto \deg(q_\nu)$ in the composition $\deg(q_{f_\rho(\mu_{\min})})$ together with 
    the thresholds $\vartheta(\lambda')$ and $\vartheta(\lambda)$ for 
  $\lambda=(10,3,3)$ (and $\lambda'=(10,3)$, $\mu_{\min}=(2,0)$, etc.) }
  \label{tikz_degqfrho_composition}
\end{figure}
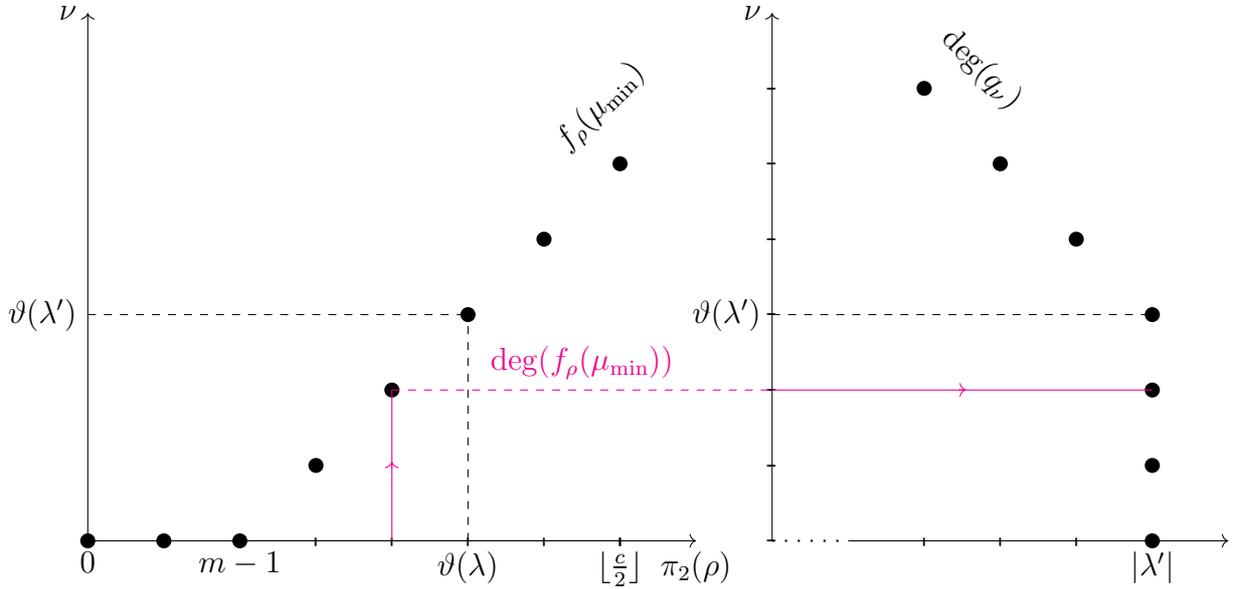
Then for every $\rho \le \vartheta(\lambda)$ \eqref{eq_degrrho} combined with the induction hypothesis for
$\lambda'$ gives that
$\deg(r_\rho)=m+|\lambda'|=|\lambda|$ and hence, by Theorem \ref{thrm_Kostkaleadingcoeffs}, the leading term of $r_\rho$ is
\[ \frac{K_{\rho,\tilde{\lambda}}}{\prod_{i=m}^r \left(e_i!\right)} \, d^{|\lambda|}. \]

If $\rho > \vartheta(\lambda)$, then $\pi_2(\rho) - \pi_2(\vartheta(\lambda))=
\pi_2(f_{\rho}(\mu_{\min}))-\pi_2(\vartheta(\lambda'))$, therefore \eqref{eq_degrrho} and the induction hypothesis imply that
\begin{multline*}
  \deg(r_\rho)=m+ \deg(q_{f_\rho(\mu_{\min})})=m+ \left( |\lambda'|- \left( \pi_2(f_{\rho}(\mu_{\min}))-\pi_2(\vartheta(\lambda'))
  \right) \right)=\\
|\lambda|-\left( \pi_2(\rho) - \pi_2(\vartheta(\lambda)) \right). 
\end{multline*}
To obtain the leading coefficient of $r_\rho$ for $\rho > \vartheta(\lambda)$, let us note that,
as $\nu \mapsto \deg(q_\nu)$ is strictly monotone decreasing for $\nu > \vartheta(\lambda')$,
the set
\[ \left\{ \left. \nu \vdash c' \right| \rho \in I(\mu_{\min},\nu)  \right\} \]
has a unique element $\nu\left( = f_{\rho}(\mu_{\min}) \right)$ with maximal $\deg(q_\nu)\left( =\deg(q_{f_\rho(\mu_{\min})})
\right)$. This means that the leading term of $r_\rho$ must come from the summand
\[ \frac{1}{e_m} p_{\mu_{\min}} s_{\mu_{\min}} q_{f_{\rho}(\mu_{\min})} s_{f_{\rho}(\mu_{\min})} \]
summand.
In particular, the leading coefficient of $r_\rho$ is $\left(1/e_m\right)$-times the product of those of $p_{\mu_{\min}}$
and $q_{f_{\rho}(\mu_{\min})}$.
Since we chose $m=\min(\lambda)$, the largest elements of $\lambda$ and $\lambda'$ are both $\lambda_1$.
Therefore, by the induction hypotheses, these leading coefficients are $1$ and
\begin{multline*}
\frac{1}{ \left( e_m-1 \right)! \prod_{i=m+1}^{r} \left( e_i! \right)}
  \stir{\lambda_1}{\lambda_1-\left( \pi_2(f_{\rho}(\mu_{\min}))-\pi_2(\vartheta(\lambda')) \right)}=\\
\frac{1}{ \left( e_m-1 \right)! \prod_{i=m+1}^{r} \left( e_i! \right)}
\stir{\lambda_1}{\lambda_1-\left( \pi_2(\rho)-\pi_2(\vartheta(\lambda)) \right)}. 
\end{multline*}

To complete the induction step, all we need to check is that
the ($\pi_2$-projection of) $\vartheta(\lambda)$ defined in \eqref{eq_thetalambda_def} agrees with the
one in Theorem
\ref{thrm_rrholeadterm}.
Because of the induction hypothesis,
\[ \pi_2\left(\vartheta(\lambda')\right)=
\min\left( \left\lfloor \frac{c'}{2} \right\rfloor, c'-\lambda_1'+1 \right), \]
and the description of the function $\rho \mapsto f_{\rho}(\mu_{\min})$, see also Figure \ref{tikz_degqfrho_composition}, all we need to check is
that
\[
  \pi_2(\vartheta(\lambda))=
\min\left( \left\lfloor \frac{c}{2} \right\rfloor, m'+
\min\left( \left\lfloor \frac{c'}{2} \right\rfloor, c'-\lambda_1+1 \right) \right)=
\min\left( \left\lfloor \frac{c}{2}\right\rfloor, c-\lambda_1+1  \right)\!.
\]
This follows easily from the observation that adding $m'$ to the inequality
\[ \left\lfloor \frac{c'}{2} \right\rfloor \le c'-\lambda_1+1, \]
we get
\[ \left\lfloor \frac{c}{2} \right\rfloor = \left\lfloor \frac{c'+m'}{2} \right\rfloor \le
\left\lfloor \frac{c'}{2} \right\rfloor+m' \le c'-\lambda_1+1+m'=c-\lambda_1+1. \]

\end{enumerate}

\end{proof}

\appendix
\section{Polynomiality of \texorpdfstring{$r_{\mu,\rho}(d)$}{r mu,rho(d)}} \label{sec_polynomiality_rmurho}
In this Appendix we show
\begin{proposition}\label{prop_rmurho_ispoly}
  The a priori rational function coefficients $r_{\mu,\rho}(d)$ defined in \eqref{eq_pmuterm} (and expanded
    in \eqref{eq_rmurho_equals}) are polynomials in $d$.
\end{proposition}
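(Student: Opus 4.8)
The plan is to reduce the statement to a divisibility property of the polynomials $B_t$ from \eqref{eq_Bt_def}. Concretely, I would first prove the following claim: \emph{for every partition $\lambda'$ without $1$'s, each coefficient $B_t$ in $\left[\,\overline Y_{\lambda'}(d)\right]\big\rvert_{a\mapsto a+x,\ b\mapsto b+x}=\sum_t B_t x^t$ is divisible by $d^t$ in $\mathbb{Q}[a,b;d]$} --- equivalently, substituting $a\mapsto a+z/d$ and $b\mapsto b+z/d$ into $\left[\,\overline Y_{\lambda'}(d)\right]$ produces a genuine polynomial in $a,b,z,d$, with no pole along $d=0$. Granting this, the Proposition is immediate from \eqref{eq_rmurho_equals}: the Schur polynomials $\{s_\nu:\nu\vdash c'-t\}$ form a $\mathbb{Q}[d]$-basis, so $d^t\mid B_t$ forces $d^t\mid q_\nu$ for every $\nu$ occurring; as $p_\mu\in\mathbb{Q}[d]$, each summand $(m/d)^t p_\mu q_\nu=m^t p_\mu\,(q_\nu/d^t)$ of \eqref{eq_rmurho_equals} lies in $\mathbb{Q}[d]$, hence so does $r_{\mu,\rho}(d)$. (For $\lambda'=\emptyset$, and for the term $t=0$, the claim is trivial since no division by $d$ occurs.)

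I would prove the divisibility claim by induction on the length of $\lambda'$, using two elementary observations: that $\partial$ commutes with any shift $a\mapsto a+w$, $b\mapsto b+w$ (immediate from its definition, since the denominator $b-a$ is unaffected), and that $\partial$ carries polynomials to polynomials. For the base case $\lambda'=(m_0)$ I apply the shift $a\mapsto a+x$, $b\mapsto b+x$ to the formula $\left[\,\overline Y_{m_0}(d)\right]=\partial\bigl(\prod_{i=0}^{m_0-1}(ia+(d-i)b)\bigr)$. Because $ix+(d-i)x=dx$, the shifted product equals $\prod_{i=0}^{m_0-1}(L_i+dx)$ with $L_i:=ia+(d-i)b$; expanding it in elementary symmetric functions and pulling the symmetric scalar $d^t$ through $\partial$ gives $B_t=d^t\,\partial\bigl(e_{m_0-t}(L_0,\dots,L_{m_0-1})\bigr)$, visibly divisible by $d^t$.

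For the induction step, denote by $e$ the variable of $\left[\,\overline Y_{\lambda'}\right]$, choose $m_1\in\lambda'$, set $\lambda''=\lambda'\setminus m_1$, and substitute $a\mapsto a+z/e$, $b\mapsto b+z/e$ into the recursion of Theorem \ref{recursion4Y} (with the ``$d$'' there equal to $e-m_1$). On the product factor, the shift turns $\prod_{i=0}^{m_1-1}(ia+(e-i)b)$ into $\prod_{i=0}^{m_1-1}(ia+(e-i)b+z)$, again a polynomial. On the other factor, one composes the $m_1/(e-m_1)$-substitution of Theorem \ref{recursion4Y} with the shift; a short direct computation shows the composite is exactly the shift $a\mapsto a+\tfrac{w}{e-m_1}$, $b\mapsto b+\tfrac{w}{e-m_1}$ with $w:=m_1a+z$, so by the induction hypothesis for $\lambda''$ this factor is a polynomial in $a,b,w,e-m_1$, and hence in $\mathbb{Q}[a,b,z,e]$. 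Since $\partial$ commutes with the shift and preserves polynomiality, $e_{m_1}\left[\,\overline Y_{\lambda'}(e)\right]\big\rvert_{a\mapsto a+z/e,\ b\mapsto b+z/e}$ is a polynomial in $a,b,z,e$, which is the divisibility claim for $\lambda'$.

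The one step that requires genuine care is this composition-of-substitutions computation in the induction step: one must verify that the $z$-shift absorbs the apparent pole along $e=0$ of the $m_1/(e-m_1)$-substitution and re-emerges as a clean shift by $w/(e-m_1)$ with $w=m_1a+z$. Everything else is bookkeeping.
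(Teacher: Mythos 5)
Your proof is correct, but it takes a genuinely different route from the paper's. The paper establishes the same key divisibility ($d^t \mid q_\nu$ for $\nu \vdash c'-t$, equivalently $d^t \mid B_t$) geometrically: it identifies the substitution $a \mapsto a+(1/d)x$, $b \mapsto b+(1/d)x$ applied to $\left[\,\overline Y_{\lambda'}(d)\right]$ with the class of the universal incidence variety $\overline{\mathcal{T}_{\lambda'}(d)}$, reads off via Schubert calculus that each $q_\nu(d)/d^t$ is the answer to an enumerative problem (counting $\lambda'$-lines to a member of a generic $t$-dimensional linear system subject to incidence conditions), hence integer-valued for $d\gg 0$, and concludes by the standard lemma that an integer-valued rational function is a polynomial. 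You instead prove the divisibility by a purely algebraic induction on the length of $\lambda'$ through the recursion of Theorem \ref{recursion4Y}; your pivotal computation --- that composing the $m_1/(e-m_1)$-substitution with the shift by $z/e$ yields exactly the shift by $(m_1a+z)/(e-m_1)$ --- is correct (both arguments become $a+\tfrac{m_1a+z}{e-m_1}$ and $b+\tfrac{m_1a+z}{e-m_1}$ respectively), and the remaining ingredients (that $\partial$ commutes with a simultaneous shift of $a$ and $b$ and sends polynomials to polynomials, and the base-case identity $B_t=d^t\,\partial\bigl(e_{m_0-t}(L_0,\dots,L_{m_0-1})\bigr)$) all check out. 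Your argument is more self-contained and elementary, needing neither the universal-section construction nor the integrality lemma; what it does not deliver is the enumerative interpretation \eqref{eq_qmuperdt_enumerative} of the quotients $q_\nu(d)/d^t$, which the paper states is its main reason for including the appendix at all.
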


This is not strictly necessary for the proof in Section
\ref{sec_proofdegreesofPlnumbers} to work: Setting the degree of a rational function $f=p/q$
$\left( p, q \in \mathbb{Q}[d]\right)$ to $\deg(f)=\deg(p)-\deg(q)$,
the proof works without any modification.

Including this proof, however, we can highlight the fact that the coefficients
$q_{\nu}(d)/d^t$ ($\nu \vdash c'-t$) have a geometric interpretation, see \eqref{eq_qmuperdt_enumerative}.
Similarly to how we proved Theorem \ref{Y-poly} (or, equivalently, Theorem
\ref{thrm_pluckernumberspolynomial}) in \cite{feher_juhasz2023plucker}, this leads to
\begin{lemma}\label{lemma_dtdividesqmu}
  For every $\nu \vdash c'-t$ ($t=0,\dots,c'$) for the coefficient $q_\nu \in \mathbb{Q}[d]$
  of the Schur polynomial $s_\nu$ in $B_t$, see \eqref{eq_pmuqnudef}, we have
  \[ d^t \mid q_{\nu}(d). \]
\end{lemma}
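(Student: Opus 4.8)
The plan is to imitate the proof of Theorem~\ref{Y-poly} (equivalently Theorem~\ref{thrm_pluckernumberspolynomial}) from \cite{feher_juhasz2023plucker}: we realize each ratio $q_\nu(d)/d^t$ as the coefficient of a Schubert monomial in the fundamental class of an honest subvariety, so that it is an integer-valued function of $d$, and then pass from this to divisibility in $\Q[d]$ by an elementary arithmetic argument.

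The first and main step is to give a geometric meaning to the substitution $a\mapsto a+x$, $b\mapsto b+x$ of \eqref{eq_Bt_def}. The stratum $Y_{\lambda'}(d)\subset\Pol^d(\C^2)$ is a cone for the group $\C^*$: it is invariant under the scaling action of the center of $\GL(2)$, which acts on $\Pol^d(\C^2)$ with weight $-d$. Hence, as recalled from \cite{feher_juhasz2023plucker,fnr-root} --- this being exactly the geometric content of the substitution in Theorem~\ref{recursion4Y} --- for a rank-two bundle $E$ and a line bundle $\mathcal{L}$ over a base $B$, and a section of $\Pol^d(E)\otimes\mathcal{L}$ transversal to the subbundle of $Y_{\lambda'}(d)$-points, the class of its $Y_{\lambda'}(d)$-locus is obtained from $\left[\,\overline Y_{\lambda'}(d)\right]$, written in the Chern roots $a,b$ of $E^\vee$, by the substitution $a\mapsto a+\tfrac1d c_1(\mathcal{L})$, $b\mapsto b+\tfrac1d c_1(\mathcal{L})$. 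We apply this with $B=\Gr_2(\C^n)\times\P^N$ for $n$ and $N$ large, with $E=S$ the tautological bundle, and with $\mathcal{L}=\mathcal{O}_{\P^N}(1)$, so that $c_1(\mathcal{L})=H$ is the hyperplane class; transversality of a generic section holds as in Proposition~\ref{cor:equi2nonequi}. Denoting by $\mathcal{Z}_{\lambda'}(d)\subset\Gr_2(\C^n)\times\P^N$ the $Y_{\lambda'}(d)$-locus of such a generic section and expanding by means of \eqref{eq_Bt_def} and \eqref{eq_pmuqnudef}, we obtain
\begin{equation}\label{eq_qmuperdt_enumerative}
  \left[\,\overline{\mathcal{Z}_{\lambda'}(d)}\subset\Gr_2(\C^n)\times\P^N\,\right]
  =\left[\,\overline Y_{\lambda'}(d)\right]\Big\rvert_{a\mapsto a+\frac{H}{d},\; b\mapsto b+\frac{H}{d}}
  =\sum_{t=0}^{c'}\ \sum_{\nu\vdash c'-t}\frac{q_\nu(d)}{d^t}\,s_\nu\,H^t .
\end{equation}
Thus $q_\nu(d)/d^t$ is the coefficient of the Schubert monomial $s_\nu H^t$ in the fundamental class of the honest subvariety $\overline{\mathcal{Z}_{\lambda'}(d)}$; in particular it is an enumerative number.

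Since the monomials $s_\nu H^t$ form a $\Z$-basis of $H^*(\Gr_2(\C^n)\times\P^N;\Z)$ in the relevant range of $n,N$, the left-hand side of \eqref{eq_qmuperdt_enumerative} being an integral class forces $q_\nu(d)/d^t\in\Z$ for every integer $d$ large enough for transversality to hold. It remains to deduce divisibility in $\Q[d]$. We may assume $q_\nu\not\equiv0$ and write $q_\nu(d)=d^a q(d)$ with $q\in\Q[d]$ and $q(0)\neq0$; we must show $a\ge t$. If $a<t$, then for every sufficiently large prime $p$ we have $p^t\mid q_\nu(p)$, hence $q(p)=q_\nu(p)/p^a\in\Z$ and $p^{\,t-a}\mid q(p)$, so in particular $p\mid q(p)$. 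But $q(p)\equiv q(0)\pmod p$, so $p\mid q(0)$ for infinitely many primes $p$, whence $q(0)=0$ --- a contradiction. Therefore $a\ge t$, i.e.\ $d^t\mid q_\nu(d)$ in $\Q[d]$, which is the assertion of the lemma.

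The only genuine obstacle is \eqref{eq_qmuperdt_enumerative}: one must verify that the formal substitution $a\mapsto a+\tfrac1d c_1(\mathcal{L})$ --- which a priori introduces denominators $d$ and $d^2$ (the latter through $c_2$) --- truly computes the fundamental class of a subvariety of the expected codimension $c'$, and that a transversal generic section of $\Pol^d(S)\otimes\mathcal{O}_{\P^N}(1)$ exists. For the untwisted bundle this is precisely Proposition~\ref{cor:equi2nonequi}; the twist by $\mathcal{L}$ only requires carrying along one additional $\C^*$-weight, which is legitimate because $Y_{\lambda'}(d)$ is a cone, so no essentially new idea is needed.
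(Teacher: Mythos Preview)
Your proof is correct and follows essentially the same strategy as the paper's: realize $q_\nu(d)/d^t$ as the Schubert coefficient of the class of an honest $Y_{\lambda'}(d)$-locus in a product of a Grassmannian with a projective space (the paper uses the universal family $\overline{\mathcal{T}_{\lambda'}(d)}\subset\P(\Pol^d(\C^n))\times\Gr_2(\C^n)$, you twist by $\mathcal{O}_{\P^N}(1)$ and take a generic section---equivalent in effect), conclude integrality for $d\gg0$, and then pass to divisibility in $\Q[d]$. The only difference is that the paper cites the standard Lemma~\ref{rat-poly} for this last step, whereas you give a direct prime argument; note, however, that your congruence ``$q(p)\equiv q(0)\pmod p$'' is only literally valid once you clear denominators (since $q\in\Q[d]$, not $\Z[d]$)---replace $q$ by $Nq\in\Z[d]$ for a suitable $N$, and the argument goes through verbatim.
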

By \eqref{eq_rmurho_equals}, this proves Proposition \ref{prop_rmurho_ispoly}.

\begin{proof}[Proof of Lemma \ref{lemma_dtdividesqmu}]
In \cite[\S~8.1]{feher_juhasz2023plucker} we showed that 
the class of the set 
\[ \overline{\mathcal{T}_\lambda(d)}:=
  \left\{ \left. \left( \left[ f \right],W \right) \in \P(\Pol^d(\C^n)) \times \Gr_2(\C^n)
    \right| W \in \overline{\mathcal{T}_\lambda Z_f}  \right\}
  \]
  can be deduced from $\left[ \, \overline Y_{\lambda}(d) \right]$ as
  \begin{equation}\label{eq_classoflocus_universalsection}
  \left[ \, \overline{\mathcal{T}_\lambda(d)} \subset \P(\Pol^d(\C^n)) \times \Gr_2(\C^n) \right]=
\left[ \, \overline Y_{\lambda}(d) \right]
\big\rvert_{\stackon{$\hspace{.075em} \scriptstyle b \mapsto b + (1/d)x$}{$\scriptstyle a  \mapsto  a+(1/d)x$}},
\end{equation}
where on the right-hand side of the substitution
$a,b$ and $x$ denote the Chern roots of the duals of the tautological bundles $S \to \Gr_2(\C^n))$
and $L \to \P(\Pol^d(\C^n))$.

Expanding the right-hand side of \eqref{eq_classoflocus_universalsection} for $\lambda=\lambda'$,
we get that 
for big enough $n$'s ($n \ge c'+2$)
\begin{equation}\label{eq_class_locusofuniversalsection}
\left[ \, \overline Y_{\lambda'}(d) \right]
\big\rvert_{\stackon{$\hspace{.075em} \scriptstyle b \mapsto b + (1/d)x$}
{$\scriptstyle a  \mapsto  a+(1/d)x$}}=
\sum_{t=0}^{c'}\left( \frac{1}{d} \right)^t B_t x^t = 
\sum_{t=0}^{c'}\sum_{\nu \vdash c'-t} \left( \frac{1}{d} \right)^t q_{\nu}(d) x^t s_{\nu},
\end{equation}
where all the $B_t$'s and the $q_{\nu}$'s are as in \eqref{eq_Bt_def} and \eqref{eq_pmuqnudef} but
now understood in the variables the Chern roots $a$ and $b$ of $S^{\vee} \to \Gr_2(\C^n)$.
Using Schubert calculus, we see that the coefficient of an $x^t s_{\nu}$ ($\nu \vdash c'-t$) solves
an enumerative problem:
If $\nu=(u,v)$, then
for a generic $t$-dimensional linear system $S$ of degree $d$ hypersurfaces in $\P(\C^n)$ and generic
linear subspaces
$A$ of dimension $v$ and $B$ of dimension $u+1$ such that $A \subset B \subset \P(\C^n)$ 
\begin{multline}\label{eq_qmuperdt_enumerative}
\left( \frac{1}{d} \right)^t q_{(u,v)} (d)=\\
  \text{the number of }
  \lambda'\text{-lines to a member of } S \text{ that intersect } A \text{ and are contained in } B.
\end{multline}
In particular, the values of the rational functions $q_{\nu}(d)/d^t$ ($\nu \vdash c'-t$) are all integers for
$d \gg 0$, hence the following well-known Lemma finishes the proof.
\begin{lemma}\label{rat-poly}
Suppose that $f(x)$ is a rational function, such that $f(d)$ is an integer for all $d \gg 0$ integers.
Then $f(x)$ is a polynomial.
\end{lemma}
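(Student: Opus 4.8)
The plan is to show that a sufficiently high iterate of the forward difference operator $\Delta$, given by $(\Delta g)(x)=g(x+1)-g(x)$, annihilates $f$, and that this forces $f$ to be a polynomial. Write $f=p/q$ with $p,q\in\mathbb{Q}[x]$ coprime, and let $e:=\deg p-\deg q$ be the order of $f$ at infinity. A direct computation with leading terms shows that $\Delta$ lowers this order by at least one: if $f=p/q$ then $\Delta f=p_1/q_1$ with $\deg p_1-\deg q_1\le e-1$ (if $e\ge 1$ the leading term of $\Delta f$ behaves like $e\cdot(\text{lead})\,x^{e-1}$; if $e\le 0$ the leading terms of $f(x+1)$ and $f(x)$ cancel). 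Hence for $N:=\max(e+1,0)$ the rational function $g:=\Delta^{N}f$ has order at infinity $\le-1$, so $g(x)\to 0$ as $x\to+\infty$.

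On the other hand, $g(d)=\sum_{i=0}^{N}(-1)^{N-i}\binom{N}{i}f(d+i)$ is a $\mathbb{Z}$-linear combination of the values $f(d),f(d+1),\dots,f(d+N)$, so $g(d)\in\mathbb{Z}$ for all $d\gg 0$ by hypothesis. Combining this with $g(d)\to 0$, we get $|g(d)|<1$ and $g(d)\in\mathbb{Z}$ for $d$ large, hence $g(d)=0$ for all $d\gg 0$. Since a nonzero rational function has only finitely many zeros, this gives $g=\Delta^{N}f\equiv 0$ as a rational function, and therefore $\Delta^{k}f\equiv 0$ for every $k\ge N$.

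To conclude that $f$ is a polynomial I would use Newton's forward-difference formula. Choose an integer $x_0$ large enough that $x_0+t$ is not a pole of $f$ for any $t\ge 0$ (possible since $f$ has finitely many poles); then every $(\Delta^{k}f)(x_0)$ is defined, and for every integer $t\ge 0$
\[ f(x_0+t)=\sum_{k=0}^{t}\binom{t}{k}(\Delta^{k}f)(x_0)=\sum_{k=0}^{N-1}\binom{t}{k}(\Delta^{k}f)(x_0)=Q(x_0+t),\]
where $Q(x):=\sum_{k=0}^{N-1}\binom{x-x_0}{k}(\Delta^{k}f)(x_0)\in\mathbb{Q}[x]$ is a polynomial of degree $<N$ (the second equality uses $\Delta^{k}f\equiv 0$ for $k\ge N$ together with $\binom{t}{k}=0$ for integers $0\le t<k$). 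Thus the rational function $f-Q$ vanishes at the infinitely many points $x_0,x_0+1,x_0+2,\dots$, so $f=Q$ is a polynomial.

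The argument involves no genuine obstacle; the only points that require a little care are the bookkeeping in the first step (that $\Delta$ lowers the order of a rational function at infinity, so that finitely many differences produce a function tending to $0$) and, in the last step, the choice of $x_0$ avoiding all shifted poles so that the difference values $(\Delta^{k}f)(x_0)$ are well defined.
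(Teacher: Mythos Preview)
Your proof is correct. The paper itself does not prove this lemma: it is stated as ``well-known'' and invoked without argument. Your forward-difference approach is a standard and clean way to establish the result; the only mildly delicate point is the claim that $\Delta$ lowers the order at infinity, which you handle correctly (writing $\Delta f=\dfrac{p(x+1)q(x)-p(x)q(x+1)}{q(x)q(x+1)}$, the leading $x^{\deg p+\deg q}$ terms in the numerator cancel, giving numerator degree $\le\deg p+\deg q-1$ against denominator degree $2\deg q$). The conclusion via Newton's forward-difference formula is fine, though once you know $\Delta^{N}f\equiv 0$ one can also argue directly: a rational $h$ with $\Delta h\equiv 0$ is periodic, hence has either no poles or infinitely many, so it is a polynomial and therefore constant; induction then gives that $f$ is a polynomial of degree $<N$.
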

\end{proof}

\bibliography{bibliography_andris}
\bibliographystyle{alpha}
\end{document}